\providecommand{\tabularnewline}{\\}
\newcommand{\lyxdot}{.}
\numberwithin{equation}{section}
\theoremstyle{plain}
\newtheorem{thm}{\protect\theoremname}
  \theoremstyle{definition}
  \newtheorem{defn}[thm]{\protect\definitionname}
  \theoremstyle{plain}
  \newtheorem{prop}[thm]{\protect\propositionname}
  \theoremstyle{plain}
  \newtheorem{lem}[thm]{\protect\lemmaname}
  \theoremstyle{plain}
  \newtheorem{conjecture}[thm]{\protect\conjecturename}
  \theoremstyle{plain}
  \newtheorem{question}[thm]{\protect\questionname}
  \theoremstyle{remark}
  \newtheorem{rem}[thm]{\protect\remarkname}
  \theoremstyle{plain}
  \newtheorem{cor}[thm]{\protect\corollaryname}
\newcommand{\strong}[1]{\textbf{#1}}
\subjclass[2010]{Primary 03D32; Secondary 68Q30}
  \providecommand{\conjecturename}{Conjecture}
  \providecommand{\corollaryname}{Corollary}
  \providecommand{\definitionname}{Definition}
  \providecommand{\lemmaname}{Lemma}
  \providecommand{\propositionname}{Proposition}
  \providecommand{\questionname}{Question}
  \providecommand{\remarkname}{Remark}
\providecommand{\theoremname}{Theorem}
\begin{document}
\global\long\def\colonsubseteq{\colon{\subseteq{}}}

\title{When does randomness come from randomness?}

\author{Jason Rute}

\address{Department of Mathematics\\
Pennsylvania State University\\
University Park, PA 16802 }

\email{jmr71@math.psu.edu}

\urladdr{\url{http://www.personal.psu.edu/jmr71/}}

\keywords{Algorithmic randomness, computable randomness (recursive randomness),
Schnorr randomness, randomness conservation (randomness preservation),
no randomness from nothing (no randomness ex nihilo), computable analysis.}

\thanks{This work was started while the author was participating in the
Program on Algorithmic Randomness at the Institute for Mathematical
Sciences of the National University of Singapore in June 2014. The
author would like to thank the institute for its support.}

\maketitle

\begin{abstract}
A result of Shen says that if $F\colon2^{\mathbb{N}}\rightarrow2^{\mathbb{N}}$
is an almost-everywhere computable, measure-preserving transformation,
and $y\in2^{\mathbb{N}}$ is Martin-Löf random, then there is a Martin-Löf
random $x\in2^{\mathbb{N}}$ such that $F(x)=y$. Answering a question
of Bienvenu and Porter, we show that this property holds for computable
randomness, but not Schnorr randomness. These results, combined with
other known results, imply that the set of Martin-Löf randoms is the
largest subset of $2^{\mathbb{N}}$ satisfying this property and also
satisfying randomness conservation: if $F\colon2^{\mathbb{N}}\rightarrow2^{\mathbb{N}}$
is an almost-everywhere computable, measure-preserving map, and if
$x\in2^{\mathbb{N}}$ is random, then $F(x)$ is random.
\end{abstract}

\section{Introduction}

Algorithmic randomness is a branch of mathematics which gives a rigorous
formulation of randomness using computability theory. The first algorithmic
randomness notion, Martin-Löf randomness, was formulated by Martin-Löf
\cite{Martin-Lof:1966mz} and has remained the dominant notion in
the literature. Schnorr \cite{Schnorr1971}, however, felt that Martin-Löf
randomness was too strong, and introduced two weaker randomness notions
now known as Schnorr randomness and computable randomness.

While, historically randomness has mostly been studied on Cantor space
$2^{\mathbb{N}}$ with the fair-coin measure $\lambda$, there has
been a lot of interest lately in the behavior of algorithmic randomness
on other spaces and measures. Many of these results are of the form,
``A point $y\in\mathbb{Y}$ is $\nu$-random (where $\nu$ is a Borel
probability measure on $\mathbb{Y}$) if and only if $y=F(x)$ for
some $\mu$-random $x\in\mathbb{X}$ (where $\mu$ is a Borel probability
measure on $\mathbb{X}$).''

As an example, consider von Neumann's coin. Von Neumann showed that
given a possibly biased coin with weight $p\in(0,1)$, one can recover
the distribution of a fair coin by following this given procedure:
Toss the coin twice. If the results match, start over, forgetting
both results. If the results differ, use the first result, forgetting
the second. Von Neumann's procedure describes a partial computable
function $F\colonsubseteq2^{\mathbb{N}}\rightarrow2^{\mathbb{N}}$
whereby an infinite sequence of independent and identically distributed
biased coin tosses $x\in2^{\mathbb{N}}$ is turned into an infinite
sequence of independent and identically distributed fair coin tosses
$F(x)$.

Now, as for randomness, one can prove that for a fixed computable
$p\in(0,1)$, a sequence $y\in2^{\mathbb{N}}$ is Martin-Löf random
for the fair-coin measure if and only if $y$ can be constructed via
the von Neumann procedure starting with some $x\in2^{\mathbb{N}}$
which is random for the $p$-Bernoulli measure ($p$-weighted coin
measure). While there are many methods available in algorithmic randomness
to prove this, the easiest is to just apply the following theorem. 
\begin{thm}[{See \cite[Thms.~3.2, 3.5]{BienvenuSubmitted}\cite[Prop.~5]{Hoyrup:2009pi}.}]
\label{thm:ML-rp-nrfn}Assume $\mu$ and $\nu$ are computable probability
measures on $2^{\mathbb{N}}$ and that the map $F\colon(2^{\mathbb{N}},\mu)\rightarrow(2^{\mathbb{N}},\nu)$
is almost-everywhere computable ($F$ is computable on a $\mu$-measure-one
set) and measure-preserving ($\nu(B)=\mu(F^{-1}(B))$ for all Borel
$B$).
\begin{enumerate}
\item If $x$ is $\mu$-Martin-Löf random then $F(x)$ is $\nu$-Martin-Löf
random.
\item If $y$ is $\nu$-Martin-Löf random then $y=F(x)$ for some $\mu$-Martin-Löf
random $x$.
\end{enumerate}
\end{thm}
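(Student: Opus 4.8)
The plan is to prove the two parts of Theorem~\ref{thm:ML-rp-nrfn} separately, using the standard characterization of Martin-L\"of randomness via universal tests. Throughout I would fix a universal $\mu$-Martin-L\"of test and a universal $\nu$-Martin-L\"of test, and exploit the fact that both $\mu$ and $\nu$ are computable and that $F$ is almost-everywhere computable and measure-preserving.

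For part (1), the strategy is to show that if $F(x)$ fails $\nu$-randomness, then $x$ fails $\mu$-randomness. The key observation is that measure-preservation lets us pull back a $\nu$-Martin-L\"of test to a $\mu$-Martin-L\"of test along $F$. Concretely, given a $\nu$-test $\{V_n\}$ (uniformly $\Sigma^0_1$ open sets with $\nu(V_n)\leq 2^{-n}$), I would consider the preimages $F^{-1}(V_n)$. Because $F$ is almost-everywhere computable, I can enumerate $F^{-1}(V_n)$ as an open set modulo a measure-zero error (more precisely, I would work with the effectively open domain of computability of $F$ and enumerate preimages there), and because $F$ is measure-preserving we get $\mu(F^{-1}(V_n)) = \nu(V_n)\leq 2^{-n}$. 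This yields a $\mu$-Martin-L\"of test capturing every $x$ with $F(x)$ in the $\nu$-test. Hence if $x$ is $\mu$-random, $F(x)$ passes every $\nu$-test, so $F(x)$ is $\nu$-random. The one subtlety to handle carefully is the measure-zero set where $F$ is undefined or non-computable; I would absorb this into the test, using that $\mu$-random points avoid any fixed effective null set.

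Part (2) is the harder direction and I expect it to be the main obstacle. Here I want to show every $\nu$-random $y$ has a $\mu$-random $F$-preimage. The natural approach is to push a $\mu$-Martin-L\"of test forward along $F$ and argue by contradiction: suppose $y$ is $\nu$-random but every $x\in F^{-1}(y)$ fails $\mu$-randomness. The difficulty is that $F$ need not be injective and $F^{-1}(y)$ may be large, so I cannot simply invert. The cleaner route is to build the preimage directly: I would consider the set $N$ of all $\mu$-non-random points (the complement of the largest $\mu$-random set, which is an effective $\mu$-null $\Sigma^0_2$ set captured by the universal test), and show that $F(N)$ is an effective $\nu$-null set. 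Since $F$ is measure-preserving, $\nu(F(N))\leq \mu(F^{-1}(F(N)))$, and $F^{-1}(F(N))\supseteq N$ has $\mu$-measure zero but the preimage could be larger, so this inequality alone is insufficient. Instead I would use the measure-preservation identity in the form $\nu(B)=\mu(F^{-1}(B))$ applied to the event ``$y$ has no $\mu$-random preimage,'' showing this event is a $\nu$-null set that is moreover effectively null, so no $\nu$-random $y$ lies in it.

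Making the last step effective is where the work concentrates. The plan is to exhibit the bad set $\{y : F^{-1}(y)\cap \mathrm{MLR}_\mu = \emptyset\}$ as contained in an explicit $\nu$-Martin-L\"of test. To do this I would enumerate the universal $\mu$-test $\{U_n\}$ covering the $\mu$-non-randoms, and for each level consider $W_n$, an effective open approximation to $F(U_n^{c\,\text{-complement structure}})$; the precise construction would layer the preimage structure so that $y$ lies in the test exactly when, up to a controlled error, all of $F^{-1}(y)$ is covered by $U_n$. Using $\mu(U_n)\leq 2^{-n}$, measure-preservation, and a Fubini-type or disintegration argument to control how the preimage fibers distribute, I would bound $\nu(W_n)$ appropriately to get a genuine $\nu$-test. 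The computability of $\mu$, $\nu$, and the almost-everywhere computability of $F$ are exactly what make all these open sets uniformly enumerable. Once the bad set is trapped in a $\nu$-test, every $\nu$-random $y$ avoids it and therefore has a $\mu$-random preimage, completing the proof.
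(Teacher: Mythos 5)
Your part (1) is fine and matches the standard argument that the paper cites: pull back a $\nu$-test along $F$, using that the machine's preimage of an open set is effectively open, that measure-preservation gives $\mu(F^{-1}(V_n))=\nu(V_n)$, and that the $\Sigma^0_2$ $\mu$-null complement of $\operatorname{dom}F$ is effectively null (since $\mu$ is computable), hence avoided by every $\mu$-Martin-L\"of random.

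Part (2), however, has a genuine gap: you correctly reduce the problem to showing that the bad set $\{y : F^{-1}(y)\cap\mathrm{MLR}_\mu=\emptyset\}$ is captured by a $\nu$-test, but you never supply the mechanism that makes this set effectively open with the right measure bound, and the ``Fubini-type or disintegration argument'' you invoke is not how this goes --- no disintegration is needed, and it would not by itself give enumerability. The missing idea is \emph{effective compactness}. Let $U_n$ be the universal $\mu$-test and $K_n=2^{\mathbb{N}}\smallsetminus U_n$. Since the complement of $\operatorname{dom}F$ is effectively $\mu$-null, it is contained in every $U_n$, so $K_n\subseteq\operatorname{dom}F$; this is exactly the subtlety flagged in the paper's footnote (the image of a $\Pi^0_1$ class under an a.e.\ computable map is $\Pi^0_1$ \emph{only} when the class lies inside the domain). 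Because $K_n$ is an effectively compact $\Pi^0_1$ subset of the domain and $F$ is computable on it, $F(K_n)$ is uniformly $\Pi^0_1$, so $V_n:=2^{\mathbb{N}}\smallsetminus F(K_n)$ is uniformly $\Sigma^0_1$. Your fiber condition ``all of $F^{-1}(y)$ is covered by $U_n$'' is precisely $y\in V_n$, and the openness of this set is a compactness fact, not a measure-theoretic one --- this is where your sketch would stall. The measure bound is then immediate from measure-preservation: $F^{-1}(V_n)\cap\operatorname{dom}F\subseteq U_n$, so $\nu(V_n)=\mu(F^{-1}(V_n))\leq\mu(U_n)\leq2^{-n}$, making $\{V_n\}$ a $\nu$-test. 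A $\nu$-random $y$ avoids some $V_n$, hence $y=F(x)$ for some $x\in K_n$, and every element of $K_n$ is $\mu$-Martin-L\"of random by universality of $\{U_n\}$. (Note the paper does not reprove this theorem; it cites Shen's argument as in Bienvenu--Porter and Hoyrup--Rojas, with the footnote correcting the domain issue above. Your disintegration instinct is not useless --- it is essentially how the paper later proves the analogous result for \emph{computable} randomness, where compactness of complements of universal test levels is unavailable --- but for Martin-L\"of randomness the compact-image route is both necessary to your stated plan and absent from it.)
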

The first half of Theorem~\ref{thm:ML-rp-nrfn} is known as \emph{randomness conservation},
\emph{randomness preservation}\footnote{Simpson and Stephan \cite{Simpson:aa} use the term ``randomness
preservation'' for another property: if $x$ is Martin-Löf random,
then there is a PA degree $p$ such that $x$ is Martin-Löf random
relative to $p$.},  or \emph{conservation of randomness}. This result, at least in
the finitary setting of Kolmogorov complexity, goes back to Levin
\cite[Thm.~1]{Levin:1984}. (See Gács \cite{Gacs:aa}.) The second
half is known as \emph{no-randomness-from-nothing} or \emph{no randomness ex nihilo}.
Bienvenu and Porter \cite{BienvenuSubmitted} attribute it as an unpublished
result of Alexander Shen.\footnote{There is an error in the proof of no-randomness-from-nothing in \cite[Thm.~3.5]{BienvenuSubmitted}.
The authors say ``Since $\Phi$ is an almost total Turing functional,
the image under $\Phi$ of a $\Pi_{1}^{0}$ class is also a $\Pi_{1}^{0}$
class.'' This is not true unless the $\Pi_{1}^{0}$ class is a subset
of the domain of $\Phi$. Fortunately, their proof only uses the $\Pi_{1}^{0}$
set $2^{\mathbb{N}}\smallsetminus U_{i}$, where $U_{i}$ is the $i$th
level of the optimal Martin-Löf test. This set is contained in the
domain of $\Phi$. Moreover, the proof does not rely on the compactness
of $2^{\mathbb{N}}$ at all, just on the effective compactness of
$K_{i}$. Therefore, no-randomness-from-nothing applies to all computable
probability spaces, not just the compact ones, as observed by Hoyrup
and Rojas \cite[Prop.~5]{Hoyrup:2009pi}.} Both these results, together, say that $F$ is a surjective map from
the set of $\mu$-Martin-Löf randoms to the set of $\nu$-Martin-Löf
randoms. Theorem~\ref{thm:ML-rp-nrfn} also holds for other computable
probability spaces with layerwise computable maps (Hoyrup and Rojas
\cite[Prop.~5]{Hoyrup:2009pi}). (Also see Hertling and Weihrauch
\cite{Hertling.Weihrauch:1998} for a randomness conservation result
for partial maps between effective topological spaces.)

Theorem~\ref{thm:ML-rp-nrfn} is sufficient for proving many of the
results which characterize Martin-Löf randomness for one probability
space in terms of Martin-Löf randomness for another.\footnote{In some applications (e.g.\ Hoyrup and Rojas \cite[Cor.~2]{Hoyrup:2009pi})
one may also need to apply the following theorem: if $\nu$ is absolutely
continuous with respect to $\mu$ and the density function $d\nu/d\mu$
is bounded from above by a constant (or by an $L^{1}(\mu)$-computable
function), then every $\nu$-random is also $\mu$-random.}  There are many such examples in Martin-Löf random Brownian motion
\cite[Cor.~2]{Hoyrup:2009pi}, \cite{Allen:aa}, \cite{Fouche2000}.

Bienvenu and Porter \cite{BienvenuSubmitted} and independently Rute
\cite{rute1} showed that randomness conservation does not hold for
computable randomness \cite[Thm.~4.2]{BienvenuSubmitted}\cite[Cor.~9.7]{rute1},
but it does hold for Schnorr randomness \cite[Thm.~4.1]{BienvenuSubmitted}\cite[Prop.~7.7]{rute1}.
Bienvenu and Porter asked if no-randomness-from-nothing holds for
Schnorr and computable randomness.

In Section~\ref{sec:NRFN-CR}, we show that no-randomness-from-nothing
holds for computable randomness.

In Section~\ref{sec:3-generalizations}, we generalize the results
of Section~\ref{sec:NRFN-CR} in three ways: First, we generalize
from almost-everywhere computable maps to Schnorr layerwise computable
maps (a form of effectively measurable map well-suited for computable
measure theory). Second, we generalize from Cantor space $2^{\mathbb{N}}$
to an arbitrary computable metric space. Third, we sketch how to relativize
the result to an oracle, except that we use uniform relativization
to which computable randomness is better suited. Section~\ref{sec:3-generalizations}
is independent from the rest of the paper.

In Section~\ref{sec:Application-of-nrfn}, we give an interesting
application of no-randomness-from-nothing for computable randomness.
We show that if a probability measure $\mu$ is the sum of a computable
sequence of measures $\mu_{n}$, then $x$ is $\mu$-computably random
if and only if $x$ is $\mu_{n}$-computably random for some $n$.

In Section~\ref{sec:NRFN-SR} we show no-randomness-from-nothing
does not hold for Schnorr randomness. We even show something stronger.
If $x$ is not computably random for $(2^{\mathbb{N}},\mu)$, then
there exists an almost-everywhere computable, measure-preserving map
$T\colon(2^{\mathbb{N}},\lambda)\rightarrow(2^{\mathbb{N}},\mu)$
such that $T^{-1}(\{x\})=\varnothing$.

In Section~\ref{sec:other-rand-notions} we complete the picture
by providing proofs of randomness conservation for difference randomness
(unpublished result of Bienvenu) and 2-randomness.

Last, in Section~\ref{sec:Characterizing-Martin-L=0000F6f-random},
we will show how randomness conservation and no-randomness-from-nothing
can be used to characterize a variety of randomness notions. The main
result is that Martin-Löf randomness is the weakest randomness notion
satisfying both randomness conservation and no-randomness-from-nothing.
We give two different formulations of this result, one for all computable
probability measures, and one for just the fair-coin probability measure.
The second relies on a recent result of Petrovi\'{c} \cite{Petrovic:}.

\subsection{Conclusions on Schnorr and computable randomness}

We caution the reader not to come to the hasty conclusion that Schnorr
randomness and computable randomness are ``unnatural'' just because
computable randomness does not satisfy randomness conservation and
Schnorr randomness does not satisfy no-randomness-from-nothing.

Indeed there is already compelling evidence to their naturalness.
Both Schnorr randomness and computable randomness have been characterized
by a number of theorems in analysis \cite{Gacs2011,Pathak:2014fk,Rute:2013pd}.
Moreover, as argued by Schnorr \cite[last paragraph]{Schnorr1971}
and Rute \cite{Rute:aa}, Schnorr randomness is the randomness notion
implicit in constructive measure theory. Last, Schnorr randomness
seems to be the weakest randomness notion sufficient for working with
measure theoretic objects (see, for example \cite{Gacs2011,Pathak:2014fk,Rute:2013pd}).

Instead, as we will show in a future paper \cite{Rute:mz}, it is
randomness conservation and no-randomness-from-nothing that need to
be modified. If one restricts the measure-preserving maps to those
where the ``conditional probability'' is computable, then one recovers
both randomness conservation and no-randomness-from-nothing for Schnorr
and computable randomness. This class of maps is natural and covers
nearly every measure-preserving map used in practice --- including
isomorphisms, projections on product measures, and even the von Neumann
coin example above. Martin-Löf randomness also behaves better under
these maps. Indeed, randomness conservation, no-randomness-from-nothing,
and van Lambalgen's theorem can be combined into one unified theorem
for Schnorr and Martin-Löf randomness.

\subsection{Status of randomness conservation and no-randomness-from-nothing}

We end this introduction with a table summarizing the known results
about randomness conservation and no-randomness-from-nothing. 

\bigskip 

\noindent %
\begin{tabular}{lclcl}
\toprule 
\multirow{1}{*}{Randomness notion} & \multicolumn{2}{l}{Randomness conservation\footnotemark} & \multicolumn{2}{l}{No-randomness-from-nothing}\tabularnewline
\midrule
Kurtz random & Yes & \cite[Prop.~7.7]{rute1} & \strong{No} & Thm.~\ref{thm:not-CR-nrnf}\tabularnewline
Schnorr random & Yes & \cite[Thm.~4.1]{BienvenuSubmitted}\cite[Prop.~7.7]{rute1} & \strong{No} & Thm.~\ref{thm:not-CR-nrnf}\tabularnewline
computable random & \strong{No} & \cite[Thm.~4.2]{BienvenuSubmitted}\cite[Cor.~9.7]{rute1} & Yes & Thm.~\ref{thm:CR-nrfn}\tabularnewline
Martin-Löf random & Yes & \cite[Thm.~3.2]{BienvenuSubmitted} & Yes & \cite[Thm.~3.5]{BienvenuSubmitted}\tabularnewline
Difference random & Yes & Prop.~\ref{prop:Bienvenu} & Yes & Prop.~\ref{prop:Bienvenu}\tabularnewline
Demuth random & Yes & Folklore & \strong{?} & \tabularnewline
weak 2-random & Yes & \cite[Thm.~5.9]{Bienvenu.Holzl.Porter.ea:} & Yes & \cite[Thm.~6.18]{Bienvenu.Holzl.Porter.ea:}\tabularnewline
2-random & Yes & Prop.~\ref{prop:2-rand-rp-nrfn} & Yes & Prop.~\ref{prop:2-rand-rp-nrfn}\tabularnewline
\bottomrule
\end{tabular}\footnotetext{When it is true, randomness conservation is easy to prove, and in many cases well known. Therefore, the positive results in this column should probably be attributed to folklore. The citations given are for reference.}\bigskip

\subsection{Acknowledgements}

We would like to thank Laurent Bienvenu for pointing us to the results
on difference randomness, Demuth randomness, and weak 2-randomness.
I would also like to thank both referees for their thorough reviews.

\section{\label{sec:Definitions-and-notation}Definitions and notation}

Let $2^{\mathbb{N}}$ denote Cantor space and $2^{*}$ the set of
all finite binary strings. Let $\varepsilon$ be the empty string,
and $[\sigma]$ the cylinder set of $\sigma\in2^{*}$. For a finite
Borel measure $\mu$ on $2^{\mathbb{N}}$ we will use the notation
$\mu(\sigma):=\mu([\sigma])$. For a finite Borel measure $\mu$ on
$2^{\mathbb{N}}\times2^{\mathbb{N}}$ we will use the notation $\mu(\sigma\times\tau):=\mu([\sigma]\times[\tau])$.
A measure $\mu$ on $2^{\mathbb{N}}$ is \emph{computable} if $\sigma\mapsto\mu(\sigma)$
is computable. The \emph{fair-coin measure} $\lambda$ is given by
$\lambda(\sigma)=2^{-|\sigma|}$.

Given a computable map $F\colon2^{\mathbb{N}}\rightarrow2^{\mathbb{N}}$,
the \emph{pushforward} of $\mu$ along $F$ is the computable measure
$\mu_{F}$ given by $\mu_{F}(\sigma)=\mu(F^{-1}([\sigma]))$.

As usual, we naturally identify the spaces $2^{\mathbb{N}}\times2^{\mathbb{N}}$
and $2^{\mathbb{N}}$, via the computable isomorphism $(x,y)\mapsto x\oplus y$.
(Here $x\oplus y$ is the sequence $z\in2^{\mathbb{N}}$ given by
$z(2n)=x(n)$ and $z(2n+1)=y(n)$.) We also identify their computable
measures, where $\mu$ on $2^{\mathbb{N}}\times2^{\mathbb{N}}$ is
identified with the pushforward of $\mu$ along $(x,y)\mapsto x\oplus y$.

We define Martin-Löf randomness, computable randomness, and Schnorr
randomness through an integral test characterization. These characterizations
are due to Levin \cite{Levin:1976uq}, Rute \cite[Thms.~5.3, 5.8]{rute1},
and Miyabe \cite[Thm.~3.5]{Miyabe:2013uq} respectively. Recall that
a \emph{lower semicomputable} function $t\colon2^{\mathbb{N}}\rightarrow[0,\infty]$
is the sum of a computable sequence of computable functions $t_{n}\colon2^{\mathbb{N}}\rightarrow[0,\infty)$.
\begin{defn}
\label{def:randomness}Let $\mu$ be a computable measure on $2^{\mathbb{N}}$
and let $x\in2^{\mathbb{N}}$.
\begin{enumerate}
\item $x$ is \emph{$\mu$-Martin-Löf random} if $t(x)<\infty$ for all
lower semicomputable functions $t\colon2^{\mathbb{N}}\rightarrow[0,\infty]$
such that 
\[
\int t\,d\mu\leq1.
\]

\item $x$ is \emph{$\mu$-computably random} if $t(x)<\infty$ for all
lower semicomputable functions $t\colon2^{\mathbb{N}}\rightarrow[0,\infty]$
and all computable probability measures $\nu$ on $2^{\mathbb{N}}$
such that 
\begin{equation}
\int_{[\sigma]}t\,d\mu\leq\nu(\sigma)\qquad(\sigma\in2^{*}).\label{eq:test-pair1}
\end{equation}

\item $x$ is \emph{$\mu$-Schnorr random} if $t(x)<\infty$ for all lower
semicomputable functions $t\colon2^{\mathbb{N}}\rightarrow[0,\infty]$
such that 
\[
\int t\,d\mu=1.
\]

\end{enumerate}
\end{defn}
From these definitions it is obvious that Martin-Löf randomness implies
computable randomness implies Schnorr randomness. (It is also known
that the implications do not reverse.) Also, $x\in2^{\mathbb{N}}$
is \emph{$\mu$-Kurtz random} if $x$ is not in any $\Sigma_{2}^{0}$
$\mu$-null set. Every Schnorr random is Kurtz random.

Our definition of computable randomness transfers to $2^{\mathbb{N}}\times2^{\mathbb{N}}$
as follows.
\begin{prop}
\label{prop:CR-2-dim}Let $\mu$ be a computable measure on $2^{\mathbb{N}}\times2^{\mathbb{N}}$.
A pair $(x,y)\in2^{\mathbb{N}}\times2^{\mathbb{N}}$ is $\mu$-computably
random if and only if $t(x,y)<\infty$ for all lower semicomputable
functions $t\colon2^{\mathbb{N}}\times2^{\mathbb{N}}\rightarrow[0,\infty]$
and all computable probability measures $\nu$ on $2^{\mathbb{N}}\times2^{\mathbb{N}}$
such that 
\begin{equation}
\int_{[\sigma]\times[\tau]}t\,d\mu\leq\nu(\sigma\times\tau)\qquad(\sigma,\tau\in2^{*}).\label{eq:test-pair2}
\end{equation}
\end{prop}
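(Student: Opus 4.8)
The plan is to prove the proposition by reducing it to a direct comparison of the two families of test pairs under the computable isomorphism $(x,y)\mapsto x\oplus y$. By the identifications fixed in Section~\ref{sec:Definitions-and-notation}, $(x,y)$ is $\mu$-computably random precisely when $x\oplus y$ is $\mu$-computably random in the sense of Definition~\ref{def:randomness}(2); that is, when $t(x\oplus y)<\infty$ for every lower semicomputable $t$ and every computable probability measure $\nu$ on $2^{\mathbb{N}}$ satisfying $\int_{[\rho]}t\,d\mu\le\nu(\rho)$ for all $\rho\in2^{*}$. So it is enough to show that, transported across the isomorphism, the cylinder test pairs of Definition~\ref{def:randomness}(2) and the rectangle test pairs of \eqref{eq:test-pair2} determine exactly the same non-random points, which I would do by showing that the two inequality conditions on a fixed $(t,\nu)$ are equivalent.

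The combinatorial observation driving everything is the behaviour of cylinders under $\oplus$. A cylinder $[\rho]$ of even length $2n$ is the image of the rectangle $[\sigma]\times[\tau]$ with $|\sigma|=|\tau|=n$, and a cylinder of odd length $2n+1$ is the image of $[\sigma]\times[\tau]$ with $|\sigma|=n+1$ and $|\tau|=n$; thus cylinders correspond bijectively to the ``balanced'' rectangles, those with $\big||\sigma|-|\tau|\big|\le1$. Conversely, an arbitrary rectangle $[\sigma]\times[\tau]$ maps to a clopen set that is a finite disjoint union $\bigsqcup_i[\rho_i]$ of cylinders (obtained by filling in the free bits of the shorter coordinate). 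Because the isomorphism carries computable probability measures to computable probability measures and lower semicomputable functions to lower semicomputable functions, I may regard a single $t$ and a single $\nu$ as living on both spaces at once, with $\nu(\sigma\times\tau)=\sum_i\nu(\rho_i)$ and $\int_{[\sigma]\times[\tau]}t\,d\mu=\sum_i\int_{[\rho_i]}t\,d\mu$ by additivity of the measure and the integral.

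With this dictionary the equivalence is immediate in both directions. If $(t,\nu)$ satisfies the cylinder inequality $\int_{[\rho]}t\,d\mu\le\nu(\rho)$ for every $\rho$, then summing over the cylinders $[\rho_i]$ composing any rectangle yields $\int_{[\sigma]\times[\tau]}t\,d\mu\le\nu(\sigma\times\tau)$, so the rectangle inequality \eqref{eq:test-pair2} holds. Conversely, the rectangle inequality specialised to balanced rectangles is exactly the cylinder inequality for all $\rho$ (of both parities). Hence the cylinder condition and the rectangle condition on $(t,\nu)$ are literally equivalent, the two families of admissible test pairs coincide, and therefore some cylinder pair witnesses $t(x\oplus y)=\infty$ if and only if some rectangle pair witnesses $t(x,y)=\infty$. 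This gives the stated biconditional.

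The only delicate point --- and the step I would write out most carefully --- is the length-parity bookkeeping: because $\oplus$ interleaves coordinates, cylinders never correspond to arbitrary rectangles but only to balanced ones, so one must verify that the extra rectangles contribute no new constraints (they do not, by the additivity argument) and that the padding decomposition $\bigsqcup_i[\rho_i]$ is genuinely finite and disjoint, so that the measure and integral identities are valid. Everything else is a routine transfer of lower semicomputability and of computability of measures across the isomorphism, which is already built into the conventions of Section~\ref{sec:Definitions-and-notation}.
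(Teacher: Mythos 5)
Your proposal is correct and takes essentially the same route as the paper: transport test pairs across the isomorphism $(x,y)\mapsto x\oplus y$, with your padding decomposition of an arbitrary rectangle $[\sigma]\times[\tau]$ into finitely many disjoint cylinders making explicit what the paper's one-line ``conversely'' leaves implicit. One small slip worth repairing in the write-up: cylinders correspond exactly to rectangles with $|\sigma|=|\tau|$ or $|\sigma|=|\tau|+1$, not to all rectangles with $\bigl||\sigma|-|\tau|\bigr|\le1$ (a rectangle with $|\tau|=|\sigma|+1$ is the union of two cylinders), but your argument only uses the two correct facts --- every cylinder is the image of some rectangle, and every rectangle's image is a finite disjoint union of cylinders --- so the proof stands.
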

\begin{proof}
Let $\mu'$ denote the pushforward of $\mu$ along $(x,y)\mapsto x\oplus y$.
Given any test pair $t,\nu$ satisfying (\ref{eq:test-pair2}) with
$\mu$, consider the test pair $t',\nu'$ where $t'(x\oplus y)=t(x,y)$
and $\nu'$ is the pushforward of $\nu$ under the map $(x,y)\mapsto x\oplus y$.
Then $t',\nu'$ satisfies (\ref{eq:test-pair1}) with $\mu'$. Conversely,
any test pair $t,\nu$ satisfying (\ref{eq:test-pair1}) can be translated
into a test pair $t,\nu$ satisfying (\ref{eq:test-pair2}) with $\mu$.
\end{proof}
The following more classical definition of computable randomness will
be useful as well.
\begin{lem}[{See Rute \cite[Def~2.4]{rute1}}]
\label{lem:CR-standard-def}If $\mu$ is a computable measure on
$2^{\mathbb{N}}$, a sequence $x\in2^{\mathbb{N}}$ is $\mu$-computably
random if and only if both for all $n$, $\mu(x{\upharpoonright_{n}})>0$
and for all computable measures $\nu$, 
\[
\liminf_{n}\frac{\nu(x{\upharpoonright_{n}})}{\mu(x{\upharpoonright_{n}})}<\infty.
\]

\end{lem}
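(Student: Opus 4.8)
The plan is to establish the equivalence in Lemma~\ref{lem:CR-standard-def} by translating between the integral-test formulation from Definition~\ref{def:randomness}(2) and the classical martingale-style $\liminf$ condition. The key observation is that a test pair $(t,\nu)$ satisfying (\ref{eq:test-pair1}) encodes essentially the same information as the ratio $\nu(\sigma)/\mu(\sigma)$, which is a $\mu$-supermartingale (or more precisely, the likelihood ratio between two measures). First I would handle the requirement that $\mu(x\upharpoonright_n)>0$ for all $n$: if $\mu(x\upharpoonright_n)=0$ for some $n$, then one can build a lower semicomputable $t$ supported on $[x\upharpoonright_n]$ that is infinite there while still satisfying $\int_{[\sigma]}t\,d\mu\leq\nu(\sigma)$ for a suitable $\nu$, witnessing non-randomness; conversely computable randomness should force positivity along $x$.

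For the main equivalence, suppose $x$ is not computably random in the sense of Definition~\ref{def:randomness}(2), so some test pair $(t,\nu)$ with (\ref{eq:test-pair1}) has $t(x)=\infty$. Since $t$ is lower semicomputable and $\int_{[\sigma]}t\,d\mu\leq\nu(\sigma)$, averaging $t$ over cylinders produces the conditional expectations $\frac{1}{\mu(\sigma)}\int_{[\sigma]}t\,d\mu$, which form a nonnegative $\mu$-supermartingale bounded by $\nu(\sigma)/\mu(\sigma)$. The idea is that $t(x)=\infty$ forces $\nu(x\upharpoonright_n)/\mu(x\upharpoonright_n)\to\infty$ along the cylinders containing $x$, giving $\liminf_n \nu(x\upharpoonright_n)/\mu(x\upharpoonright_n)=\infty$ for this $\nu$. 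Conversely, if $\liminf_n \nu(x\upharpoonright_n)/\mu(x\upharpoonright_n)=\infty$ for some computable $\nu$, I would define $t$ as a lower semicomputable function (the lim inf of the ratio, or a suitably chosen lower approximation) to produce a test pair with $t(x)=\infty$, keeping $\int_{[\sigma]}t\,d\mu\leq\nu(\sigma)$ intact.

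I expect the main obstacle to be the careful bookkeeping in the direction that goes from the $\liminf$ condition to the integral-test condition, since one must construct a genuinely lower semicomputable $t$ from the ratios $\nu(\sigma)/\mu(\sigma)$ while ensuring that its partial integrals over cylinders stay dominated by $\nu(\sigma)$, and that $t$ blows up precisely on the sequences where the $\liminf$ is infinite. The natural candidate is a function whose value on $[\sigma]$ is built from a slowly increasing computable approximation to $\nu(\sigma)/\mu(\sigma)$, but one must verify lower semicomputability and the integral bound simultaneously. Since this lemma is cited directly from Rute~\cite[Def~2.4]{rute1}, I would ultimately defer the detailed construction to that reference, presenting here only the conceptual dictionary between the two formulations and checking that the supermartingale arising from a test pair has its $\liminf$ ratio finite exactly when $t(x)<\infty$.
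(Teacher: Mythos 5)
Two preliminary remarks. First, the paper itself gives no proof of Lemma~\ref{lem:CR-standard-def}: the $\liminf$ condition is Rute's \emph{definition} of computable randomness in \cite{rute1}, and its equivalence with the integral-test formulation of Definition~\ref{def:randomness}(2) is the content of Theorems~5.3 and 5.8 of that reference, so your plan of presenting the translation and deferring the construction to \cite{rute1} is in substance what the paper does. Second, your two easier steps are sound. If $\mu(x{\upharpoonright_{n}})=0$ for some $n$, then $t=\sum_{k}\mathbf{1}_{[x{\upharpoonright_{n}}]}$ is lower semicomputable, has $\int_{[\sigma]}t\,d\mu=0\leq\nu(\sigma)$ for any $\nu$, and $t(x)=\infty$, witnessing non-randomness. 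And in the direction from a test pair $(t,\nu)$ with $t(x)=\infty$ to the $\liminf$ condition, lower semicontinuity gives for each $M$ a cylinder $[x{\upharpoonright_{n}}]$ on which $t>M$, whence $\nu(x{\upharpoonright_{m}})\geq\int_{[x{\upharpoonright_{m}}]}t\,d\mu\geq M\mu(x{\upharpoonright_{m}})$ for all $m\geq n$, so the ratio tends to infinity; your supermartingale framing is a correct packaging of this (modulo the harmless rescaling needed because Definition~\ref{def:randomness}(2) uses probability measures $\nu$ while the lemma quantifies over all computable measures).

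The genuine gap is exactly the step you flag and then defer: producing a lower semicomputable $t$ from a computable $\nu$ with $\nu(x{\upharpoonright_{n}})/\mu(x{\upharpoonright_{n}})\rightarrow\infty$. Your first candidate, $t(y)=\liminf_{n}\nu(y{\upharpoonright_{n}})/\mu(y{\upharpoonright_{n}})$, does satisfy the integral bound by Fatou's lemma, but it is not lower semicontinuous, let alone lower semicomputable, so it fails as stated. The naive repair $t=\sum_{k}\mathbf{1}_{V_{k}}$ with $V_{k}=\bigcup\{[\tau]:\nu(\tau)>2^{k}\mu(\tau)\}$ is lower semicomputable, but its cylinder integrals $\sum_{k}\mu(V_{k}\cap[\sigma])$ are only \emph{lower semicomputable} in $\sigma$, so one does not obtain the computable dominating measure that (\ref{eq:test-pair1}) demands --- this is precisely where computable randomness differs from Martin-L\"{o}f randomness, and it is the point your ``suitably chosen lower approximation'' leaves open. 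The missing idea is the savings trick: replace $\nu$ by a computable measure $\nu''$ whose ratio against $\mu$ still tends to infinity along $x$ but, once it exceeds $2^{k+1}$, never again drops below $2^{k}$. Then $t:=\sup_{k}2^{k}\mathbf{1}_{V''_{k+1}}$ (with $V''_{k}$ the hitting sets for $\nu''$ as above) is lower semicomputable, the savings property gives $t\leq\liminf_{n}\nu''(\cdot{\upharpoonright_{n}})/\mu(\cdot{\upharpoonright_{n}})$ pointwise, hence $\int_{[\sigma]}t\,d\mu\leq\nu''(\sigma)$ by Fatou, and $t(x)=\infty$. One must also perform the savings transformation at the level of measures rather than ratios, to avoid dividing by cylinders with $\mu(\tau)=0$; this bookkeeping is carried out in \cite{rute1}. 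So your conceptual dictionary is correct and consistent with the source, but without the savings construction the hard direction of the equivalence does not go through.
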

The ratio $\nu(x{\upharpoonright_{n}})/\mu(x{\upharpoonright_{n}})$
is known as a \emph{martingale} and can be thought of as a fair betting
strategy. (See Rute \cite[\S2]{rute1} for more discussion.) By an
effective version of Doob's martingale convergence theorem, this ratio
converges on computable randoms.
\begin{lem}[{Folklore \cite[Thm~7.1.3]{Downey2010}\footnote{The proof in \cite[Thm~7.1.3]{Downey2010} is for when $\mu$ is the
fair-coin measure, but the proof is the same for all computable measures.}}]
\label{lem:doob}Assume $x\in2^{\mathbb{N}}$ is $\mu$-computably
random and $\nu$ is a computable measure. Then the following limit
converges, 
\[
\lim_{n}\frac{\nu(x{\upharpoonright_{n}})}{\mu(x{\upharpoonright_{n}})}.
\]

\end{lem}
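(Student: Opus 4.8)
The plan is to effectivize Doob's upcrossing argument, using the martingale characterization of Lemma~\ref{lem:CR-standard-def}. Write $M_{n}=\nu(x\upharpoonright_{n})/\mu(x\upharpoonright_{n})$, which is well defined since computable randomness of $x$ forces $\mu(x\upharpoonright_{n})>0$ for all $n$. Applying Lemma~\ref{lem:CR-standard-def} to $\nu$ itself already gives $\liminf_{n}M_{n}<\infty$. Hence it is enough to rule out oscillation: I will show that for $x$ computably random and every pair of rationals $0<a<b$, the sequence $(M_{n})$ makes only finitely many upcrossings of $(a,b)$. Since a sequence in $[0,\infty)$ has a limit in $[0,\infty]$ iff it upcrosses every rational interval finitely often, combining this with $\liminf_{n}M_{n}<\infty$ yields a finite limit.

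The main work is to convert an infinite family of upcrossings into a computable measure that wins against $x$. Fix rationals $0<a<b$ and a hedging fraction $f\in(0,1)$ (say $f=\tfrac12$). I would define a nonnegative $\mu$-martingale $d$ with $d(\varepsilon)=1$ implementing a cautious ``buy-low-sell-high'' strategy: it keeps its capital constant while waiting for $M$ to fall to $\le a$; at that moment it sets aside the fraction $1-f$ of its capital and invests the fraction $f$ ``like $\nu$'', i.e.\ the invested part is proportional to $M(\sigma)=\nu(\sigma)/\mu(\sigma)$, which is itself a $\mu$-martingale because $\sigma\mapsto\nu(\sigma)$ is additive; when $M$ next reaches $\ge b$ it liquidates, banking the gain, and returns to waiting. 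Because the set-aside part is never risked, the capital never drops below $1-f$ times its value at the start of the current cycle, while each completed upcrossing multiplies the cycle-start capital by a factor $\ge g:=1-f+fb/a>1$. Consequently, if $(M_{n})$ upcrosses $(a,b)$ infinitely often along $x$, then after the $k$-th completed upcrossing the capital stays above $(1-f)g^{k}$, so $\liminf_{n}d(x\upharpoonright_{n})=\infty$.

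Setting $\rho(\sigma):=d(\sigma)\mu(\sigma)$ gives a computable probability measure (computable because $d$ and $\mu$ are, and a measure because $d$ is a nonnegative $\mu$-martingale with $d(\varepsilon)=1$), and by construction $\rho(x\upharpoonright_{n})/\mu(x\upharpoonright_{n})=d(x\upharpoonright_{n})$. But Lemma~\ref{lem:CR-standard-def} applied to $\rho$ forces $\liminf_{n}\rho(x\upharpoonright_{n})/\mu(x\upharpoonright_{n})<\infty$, contradicting the previous paragraph. Hence $(M_{n})$ upcrosses $(a,b)$ only finitely often, which completes the argument.

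The one point requiring care, and the step I expect to be the main obstacle, is the computability of $d$: the strategy's state transitions are triggered by the threshold comparisons $M(\sigma)\le a$ and $M(\sigma)\ge b$, which are only semidecidable, so a naive definition could make the discrete waiting/holding state noncomputable at the boundary values $M=a,b$. I would resolve this in the standard way, either by replacing the sharp switch with a computable piecewise-linear ramp of the invested fraction over a small window around each threshold, or by observing that the capital value $d(\sigma)$ is continuous across the thresholds, so $\sigma\mapsto d(\sigma)$ remains a computable real function even where the auxiliary state is ambiguous. Since only countably many rational pairs $(a,b)$ are needed and enlarging the windows still detects every point of nonconvergence, neither fix affects the conclusion.
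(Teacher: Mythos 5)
Your strategy is exactly the folklore proof the paper points to (it gives no proof of its own, citing \cite[Thm~7.1.3]{Downey2010}): reduce convergence to ``finitely many upcrossings of each rational interval $(a,b)$'' plus $\liminf_{n}M_{n}<\infty$, and kill infinitely many upcrossings with a buy-low-sell-high martingale. Your accounting is correct: each completed upcrossing multiplies cycle-start capital by at least $1-f+fb/a>1$ while the hedge keeps capital above $1-f$ times the cycle-start value, and $\rho(\sigma)=d(\sigma)\mu(\sigma)$ converts success of $d$ into a violation of Lemma~\ref{lem:CR-standard-def}. However, there is a genuine gap beyond the threshold issue you flagged: the lemma is stated for an \emph{arbitrary} computable $\mu$, and your construction needs $M(\sigma)=\nu(\sigma)/\mu(\sigma)$ to be a computable real \emph{uniformly in $\sigma$}, i.e.\ on all strings, not just along $x$. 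When $\mu(\sigma)=0$ (or $\mu(\sigma)$ cannot be effectively bounded away from $0$), no approximation $q_{\sigma}$ to $M(\sigma)$ can be computed at all, the state machine stalls, $d$ is not total, and $\rho=d\mu$ is not obviously a computable measure --- which is what Lemma~\ref{lem:CR-standard-def} requires. Computable randomness of $x$ only gives $\mu(x{\upharpoonright_{n}})>0$ \emph{along $x$}, which does not help off the path. A clean repair: pass to $\mu'=\frac{1}{2}(\mu+\lambda)$, which satisfies $\mu'(\sigma)\geq2^{-|\sigma|-1}$; since $\mu\leq2\mu'$, $x$ is $\mu'$-computably random by Lemma~\ref{lem:CR-smaller-to-bigger} (computable randomness being invariant under positive computable scaling), so your machine applies verbatim to show both $\nu/\mu'$ and $\mu/\mu'$ converge along $x$; moreover $\lim_{n}\mu(x{\upharpoonright_{n}})/\mu'(x{\upharpoonright_{n}})>0$, since otherwise $\lambda(x{\upharpoonright_{n}})/\mu(x{\upharpoonright_{n}})\rightarrow\infty$, contradicting Lemma~\ref{lem:CR-standard-def} applied to $\lambda$; now divide. (Alternatively one can define $\rho$ directly, splitting banked mass $\mu$-proportionally and invested mass $\nu$-proportionally so that no division ever occurs, but then the semidecidable triggers must be handled with care.)

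On your two proposed fixes for the threshold problem: the first (a precision window) is the standard and correct one --- compute $q_{\sigma}$ with $|q_{\sigma}-M(\sigma)|<\delta$ for a fixed rational $\delta<(b-a)/4$, buy when $q_{\sigma}\leq a+\delta$, sell when $q_{\sigma}\geq b-\delta$; every genuine upcrossing of $(a,b)$ still triggers both transitions, and the gain factor is at least $(b-2\delta)/(a+2\delta)>1$. But your second fallback, that ``the capital value $d(\sigma)$ is continuous across the thresholds,'' is false as stated: at a node with $M(\sigma)$ exactly $a$, whether the strategy bought or is still waiting changes the capital at the \emph{children} (invested capital moves with $M$, banked capital does not), so $d$ genuinely depends on the discrete state and is discontinuous at the boundary unless the invested fraction itself is ramped as a function of computable data such as the running minimum of $M$ within the cycle. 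Rely on the window version; with that and the full-support reduction above, your argument is complete and is essentially the cited proof of \cite[Thm~7.1.3]{Downey2010} adapted as the paper's footnote asserts.
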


\begin{defn}
A partial map $T\colonsubseteq2^{\mathbb{N}}\rightarrow2^{\mathbb{N}}$
is said to be \emph{$\mu$-almost-everywhere ($\mu$-a.e.)~computable}
for a computable probability measure $\mu$ if $T$ is partial computable\footnote{For concreteness, say $T\colonsubseteq2^{\mathbb{N}}\rightarrow2^{\mathbb{N}}$
is \emph{partial computable} if it is given by a monotone machine
$M\colonsubseteq2^{*}\rightarrow2^{*}$. Say $x\in\operatorname{dom}T$
if and only if there is some $y\in2^{\mathbb{N}}$ such that $y=\lim_{n}M(x\upharpoonright_{n})$.
In this case, $T(x)=y$. The domain of a partial computable map is
always $\Pi_{2}^{0}$.} and $\mu(\operatorname{dom}T)=1$.
\end{defn}
We denote $\mu$-a.e.~computable maps with the notation $T\colon(2^{\mathbb{N}},\mu)\rightarrow2^{\mathbb{N}}$.
Moreover, given a $\mu$-a.e.~computable map $T\colon(2^{\mathbb{N}},\mu)\rightarrow2^{\mathbb{N}}$,
the pushforward measure $\mu_{T}$ (of $\mu$ along $T$) is a well-defined
probability measure computable from $T$ and $\mu$. We use the notation
$T\colon(2^{\mathbb{N}},\mu)\rightarrow(2^{\mathbb{N}},\nu)$ to denote
that $T$ is \emph{measure-preserving}, that is $\nu=\mu_{T}$.

\section{\label{sec:NRFN-CR}No-randomness-from-nothing for computable randomness}

In this section we will prove the following.
\begin{thm}[No-randomness-from-nothing for computable randomness.]
\label{thm:CR-nrfn}If $\mu$ is a computable probability measure
on $2^{\mathbb{N}}$, $T\colon(2^{\mathbb{N}},\mu)\rightarrow2^{\mathbb{N}}$
is a $\mu$-a.e.~computable map, and $y\in2^{\mathbb{N}}$ is $\mu_{T}$-computably
random, then $y=T(x)$ for some $\mu$-computably random $x\in2^{\mathbb{N}}$.
\end{thm}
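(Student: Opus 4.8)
The plan is to realize the desired preimage as a point that is random for the \emph{conditional distribution} of $x$ given $T(x)=y$, and then to transfer this conditional randomness back into $\mu$-computable randomness. First I would extract the conditional measure from $y$ using martingale convergence. For each $\sigma\in2^{*}$ set $\nu_{\sigma}(\tau):=\mu([\sigma]\cap T^{-1}([\tau]))$. Each set $[\sigma]\cap T^{-1}([\tau])$ is effectively open uniformly in $\sigma,\tau$, so $\nu_{\sigma}(\tau)$ is lower semicomputable; and since $T$ is total on a $\mu$-conull set, for fixed length these quantities sum over $|\tau|=n$ to the computable number $\mu(\sigma)$. A lower semicomputable family summing to a computable total is itself computable, so each $\nu_{\sigma}$ is a computable subprobability measure on the $y$-space with $\nu_{\sigma}\le\mu_{T}$ and $\nu_{\sigma0}+\nu_{\sigma1}=\nu_{\sigma}$. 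Because $y$ is $\mu_{T}$-computably random, Lemma~\ref{lem:doob} guarantees that
\[
\mu_{y}(\sigma):=\lim_{m}\frac{\nu_{\sigma}(y{\upharpoonright_{m}})}{\mu_{T}(y{\upharpoonright_{m}})}
\]
exists for every $\sigma$, the denominators being positive by Lemma~\ref{lem:CR-standard-def}. The additivity and normalization of the $\nu_{\sigma}$ pass to the limit, so $\mu_{y}$ extends to a Borel probability measure on $2^{\mathbb{N}}$; it is the conditional distribution of $x$ given $T(x)=y$, and one checks it is concentrated on the fiber $T^{-1}(\{y\})$.

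Next I would produce $x$ by choosing it random for $\mu_{y}$ \emph{relative to $y$}. Since $\mu_{y}$ is a probability measure approximable from $y$ via the limits above, its random points (in the appropriate $y$-relative sense) form a conull set, so such an $x$ exists, and because $\mu_{y}$ lives on the fiber we may arrange $T(x)=y$. It then remains to show that this $x$ is $\mu$-computably random. This is the heart of the matter, and it rests on the disintegration identity $\mu=\int\mu_{y'}\,d\mu_{T}(y')$: a computable measure $\nu$ whose martingale $\nu(x{\upharpoonright_{n}})/\mu(x{\upharpoonright_{n}})$ is unbounded would have to win either against $\mu_{T}$ along $y$, which is impossible since $y$ is $\mu_{T}$-computably random, or against $\mu_{y}$ along $x$ relative to $y$, which is impossible by the choice of $x$. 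Concretely, I would split the bet $\nu(x{\upharpoonright_{n}})$ according to the conditional structure and use Lemma~\ref{lem:doob} to pass to the limit in the $y$-coordinate, so that an unbounded $\mu$-martingale manufactures an unbounded martingale in one of the two controlled coordinates.

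The main obstacle is precisely this last transfer, a van Lambalgen-style decomposition of $\mu$-computable randomness into randomness of $y=T(x)$ for $\mu_{T}$ together with conditional randomness of $x$ for $\mu_{y}$ relative to $y$. The delicacy is that $\mu_{y}$ is not computable but only limit-computable from $y$, so the martingale manipulations must be carried out relative to $y$, and the convergence of the conditional ratios must be justified from the computable randomness of $y$ rather than assumed outright; this is exactly the step where the uniform relativization alluded to in Section~\ref{sec:3-generalizations} is the right framework. A convenient way to organize the bookkeeping is to pass to the graph measure $\rho$ on $2^{\mathbb{N}}\times2^{\mathbb{N}}$, the pushforward of $\mu$ along $x\mapsto(x,T(x))$, which is computable by the same summing-of-lower-semicomputables argument and for which $x\mapsto(x,T(x))$ is an a.e.\ computable isomorphism onto its support. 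This reduces the general statement to the case of the coordinate projection, where the fibers are literal horizontal lines carrying the measures $\mu_{y}$ and Proposition~\ref{prop:CR-2-dim} applies directly.
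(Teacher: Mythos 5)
Your proposal follows essentially the same route as the paper's own proof: you pass to the graph measure $\mu_{(\mathrm{id},T)}$, build the conditional measure $\mu_y$ by Doob martingale convergence at the $\mu_T$-computably random $y$ (the paper's Lemma~\ref{lem:doob} and Lemma~\ref{lem:cond-measure}), pick $x$ random for $\mu_y$ relative to $y$ (the paper uses \emph{blind} Martin-L\"of randomness relative to $y$, which is exactly the right notion since, as you note, $\mu_y$ is only limit-computable from $y$), and conclude via a van Lambalgen-style transfer (Lemma~\ref{lem:vL}) together with the a.e.~computable isomorphism between $(2^{\mathbb{N}},\mu)$ and the graph (Lemmas~\ref{lem:isomorphism} and~\ref{lem:graph}). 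The one step you state glibly --- that $\mu_y$ ``is concentrated on the fiber'' so that $T(x)=y$ --- is not proved directly in the paper either; instead Lemma~\ref{lem:graph} derives $y=T(x)$ from the computable randomness of the pair $(x,y)$ for the graph measure, which is precisely what your closing graph-isomorphism reduction supplies, so your plan is correct as organized.
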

The proof will be similar to that of van Lambalgen's theorem \cite[\S6.9.1]{Downey2010}\cite[Thm~3.4.6]{Nies2009},
which states that $(x,y)$ is Martin-Löf random if and only if $x$
is Martin-Löf random and $y$ is Martin-Löf random relative to $x$.
First, however, we require a number of lemmas establishing properties
of computable randomness on $2^{\mathbb{N}}$ and $2^{\mathbb{N}}\times2^{\mathbb{N}}$.
The following lemma establishes randomness conservation for computable
randomness along a.e.~computable isomorphisms and will be a key tool
in this proof.\footnote{We will show, in Theorem~\ref{thm:slwc-isomorphism}, that this lemma
also holds for Schnorr layerwise computable maps.}
\begin{lem}[{Rute \cite[Prop~7.6, Thm.~7.11]{rute1}}]
\label{lem:isomorphism}Let $\mu$ and $\nu$ be computable probability
measures on $2^{\mathbb{N}}$. Let $F\colon(2^{\mathbb{N}},\mu)\rightarrow(2^{\mathbb{N}},\nu)$
and $G\colon(2^{\mathbb{N}},\nu)\rightarrow(2^{\mathbb{N}},\mu)$
be almost-everywhere computable measure-preserving maps such that
\[
G(F(x))=x\ \ \mu\text{-a.e.}\qquad\text{and}\qquad F(G(y))=y\ \ \nu\text{-a.e.}
\]
Then $F$ and $G$ both conserve computable randomness, and if $x$
is $\mu$-computably random and $y$ is $\nu$-computably random then
\[
G(F(x))=x\qquad\text{and}\qquad F(G(y))=y.
\]

\end{lem}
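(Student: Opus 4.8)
The plan is to establish randomness conservation and the a.e.-everywhere identities separately, since conservation is the engine that drives the rest. First I would prove that $F$ conserves computable randomness (the argument for $G$ being symmetric). Using the integral-test characterization of computable randomness from Definition \ref{def:randomness}(2) together with Proposition \ref{prop:CR-2-dim}, suppose toward contradiction that $x$ is $\mu$-computably random but $F(x)$ is not $\nu$-computably random. Then there is a test pair $(t,\theta)$ with $t$ lower semicomputable, $\theta$ a computable probability measure, $\int_{[\tau]} t\,d\nu \le \theta(\tau)$ for all $\tau$, yet $t(F(x)) = \infty$. The natural move is to pull this test pair back along $F$: define $t'(x) = t(F(x))$ on $\operatorname{dom} F$ and let $\theta' = \theta_{G}$ be an appropriate pushforward witnessing a valid test pair for $\mu$. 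Since $F$ is measure-preserving ($\nu = \mu_{F}$), the inequality $\int_{[\sigma]} t'\,d\mu \le \theta'(\sigma)$ should follow from the measure-preservation identity $\nu(B) = \mu(F^{-1}(B))$ applied carefully on cylinders, giving $t'(x) = \infty$ and contradicting the $\mu$-computable randomness of $x$. The delicate point here is that $t'$ must be genuinely lower semicomputable and that pulling back the cylinder-set bounds through $F$ respects the cylinder structure, which is where the monotone-machine presentation of $F$ and the computability of $\mu_{F}$ from $F$ and $\mu$ are used.

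Second, with conservation in hand, I would establish the pointwise identities $G(F(x)) = x$ for $\mu$-computably random $x$ and $F(G(y)) = y$ for $\nu$-computably random $y$. By hypothesis these hold a.e., so the set $N = \{x : x \in \operatorname{dom}(G \circ F) \text{ and } G(F(x)) \ne x\}$ is $\mu$-null. The goal is to show that no $\mu$-computably random point lies in $N$. The strategy is to argue that $N$ is captured by a test that a $\mu$-computably random point must avoid; concretely, since $G \circ F$ is a $\mu$-a.e.\ computable map and the identity map is computable, the disagreement set $N$ is an effectively approximable null set, and one can build from it a lower semicomputable integral test (with an accompanying computable measure) that diverges exactly on $N$. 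A $\mu$-computably random $x$ then escapes $N$, yielding $G(F(x)) = x$. The symmetric argument on the $\nu$ side handles $F(G(y)) = y$.

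The main obstacle I anticipate is the second step rather than the first. Randomness conservation along measure-preserving maps is the comparatively routine half; the subtlety is in showing that the exceptional null set $N$ is not merely null but is captured by a \emph{computable-randomness} test, which is strictly weaker than a Martin-L\"of test and must be presented as a genuine test pair $(t,\nu)$ in the sense of Definition \ref{def:randomness}(2). One cannot simply invoke that computable randoms avoid all null sets, since they do not. The clean way around this is presumably to phrase the disagreement as a martingale-type phenomenon and invoke the convergence behavior of Lemma \ref{lem:doob}: on a $\mu$-computably random $x$, the relevant density ratios converge, and this convergence is incompatible with $x \in N$. Making this precise — identifying the correct computable measure $\nu$ whose ratio against $\mu$ detects $N$ — is the crux, and it is exactly the point where the hypothesis that $F$ and $G$ are mutually inverse a.e.\ (rather than arbitrary measure-preserving maps) is indispensable.
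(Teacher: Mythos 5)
There is a genuine gap, and it sits in the half you call ``comparatively routine.'' In the conservation step you pull a test pair $(t,\theta)$ back along $F$ and propose $\theta'=\theta_{G}$ as the bounding measure. But $G$ is only $\nu$-a.e.\ computable: the computable probability measure $\theta$ need not give measure one to $\operatorname{dom}G$ (it can even concentrate on a $\nu$-null set where $G$ diverges), and $\sigma\mapsto\theta(G^{-1}[\sigma])$ is in general only a lower semicomputable subprobability measure, not the computable probability measure that Definition~\ref{def:randomness}(2) demands. This is not a removable technicality: an integral test bounded by a merely lower semicomputable measure is a Martin-L\"of-type test, so your pullback at best proves that $\mu$-Martin-L\"of randoms map to $\nu$-computable randoms --- and indeed conservation for computable randomness \emph{fails} for general measure-preserving a.e.\ computable maps \cite[Thm.~4.2]{BienvenuSubmitted}\cite[Cor.~9.7]{rute1}, so any argument that does not exploit the inverse in a quantitative way must break exactly at this point. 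The repair in the cited source (mirrored in this paper's Schnorr-layerwise generalization, Lemmas~\ref{lem:CR-conditioned-on-subset}, \ref{lem:CR-smaller-to-bigger} and \ref{lem:injective-slwc}) is different in kind: restrict to effectively closed sets $C_{n}$ of computable $\mu$-measure at least $1-2^{-n}$ on which $F$ is total and (after deleting an effectively null disagreement set) injective with computable inverse; condition $\mu$ on $C_{n}$ (Lemma~\ref{lem:CR-conditioned-on-subset}, via Lebesgue density and Lemma~\ref{lem:doob}); conserve randomness across this genuine isomorphism of layers; and return to $\nu$ by monotonicity (Lemma~\ref{lem:CR-smaller-to-bigger}), since the pushforward of the conditioned measure is $\leq\nu$.

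Your second step, by contrast, is easier than you fear. The set of $x$ on which the machine computing $G\circ F$ produces output incompatible with $x$ is $\Sigma_{1}^{0}$ and $\mu$-null, hence a $\Sigma_{2}^{0}$ null set, and by the paper's definition no $\mu$-Kurtz random --- a fortiori no $\mu$-computable random --- lies in it; membership of $x$ in $\operatorname{dom}F$ is likewise a Kurtz-randomness fact (the complement $2^{\mathbb{N}}\smallsetminus\operatorname{dom}F$ is $\Sigma_{2}^{0}$ and null), and $F(x)\in\operatorname{dom}G$ follows from the already-established conservation, since $F(x)$ is then $\nu$-computably random and hence $\nu$-Kurtz random. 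No integral test ``capturing $N$'' and no appeal to Lemma~\ref{lem:doob} is needed there; this is exactly how the paper argues in Theorem~\ref{thm:slwc-isomorphism} with the $\Sigma_{2}^{0}$ set $E_{n}$. (Note also that the paper does not reprove Lemma~\ref{lem:isomorphism} but imports it from \cite[Prop~7.6, Thm.~7.11]{rute1}.) So your assessment of where the crux lies is inverted: the identity half is routine, and the conservation half is precisely where the hypothesis of an a.e.\ inverse must be used through layering rather than through a direct test pullback.
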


However, most maps $T\colon(2^{\mathbb{N}},\mu)\rightarrow2^{\mathbb{N}}$
are not isomorphisms. Nonetheless, we can turn them into isomorphisms
by mapping $x$ to the pair $(x,T(x))$.\footnote{We will show, in Lemma~\ref{lem:graph-slwc}, that this lemma also
holds for Schnorr layerwise computable maps.}
\begin{lem}
\label{lem:graph}Let $\mu$ be a computable probability measure on
$2^{\mathbb{N}}$, let $T\colon(2^{\mathbb{N}},\mu)\rightarrow2^{\mathbb{N}}$
be a $\mu$-a.e.~computable map, and let $(\mathrm{id},T)$ be the
map $x\mapsto(x,T(x))$. If $(x,y)$ is $\mu_{(\mathrm{id},T)}$-computably
random, then $x$ is $\mu$-computably random and $y=T(x)$.\end{lem}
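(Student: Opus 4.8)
```latex
The plan is to observe that the map $(\mathrm{id},T)$ is an almost-everywhere
computable measure-preserving map whose image lives in the ``graph'' of
$T$, and to invoke the isomorphism lemma (Lemma~\ref{lem:isomorphism})
with the first-coordinate projection as a partial inverse. Concretely,
let $\pi\colon(x,y)\mapsto x$ be the projection onto the first coordinate.
Since $T$ is $\mu$-a.e.~computable, the map $(\mathrm{id},T)$ is
$\mu$-a.e.~computable, and $\pi$ is (totally) computable, hence
$\mu_{(\mathrm{id},T)}$-a.e.~computable. I would first check the two
measure-preserving and inversion hypotheses of Lemma~\ref{lem:isomorphism}
for the pair $F=(\mathrm{id},T)$ and $G=\pi$. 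Measure-preservation of
$(\mathrm{id},T)$ holds by definition of the pushforward
$\mu_{(\mathrm{id},T)}$. That $\pi$ is measure-preserving from
$(2^{\mathbb{N}}\times2^{\mathbb{N}},\mu_{(\mathrm{id},T)})$ to
$(2^{\mathbb{N}},\mu)$ follows because for Borel $B$,
$\mu_{(\mathrm{id},T)}(\pi^{-1}(B))=\mu(\{x: x\in B\})=\mu(B)$. The two
composition identities are immediate on the relevant full-measure sets:
$\pi((\mathrm{id},T)(x))=x$ holds for all $x\in\operatorname{dom}T$,
which is $\mu$-measure one, and $(\mathrm{id},T)(\pi(x,y))=(x,T(x))=(x,y)$
holds $\mu_{(\mathrm{id},T)}$-a.e.~since $\mu_{(\mathrm{id},T)}$ is
concentrated on the graph $\{(x,T(x)):x\in\operatorname{dom}T\}$.

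Granting these hypotheses, Lemma~\ref{lem:isomorphism} yields two things.
First, both $(\mathrm{id},T)$ and $\pi$ conserve computable randomness.
Second, and crucially, for any $\mu_{(\mathrm{id},T)}$-computably random
pair $(x,y)$ we get the \emph{exact} identity
\[
(\mathrm{id},T)(\pi(x,y))=(x,y).
\]
Unpacking the left-hand side, $(\mathrm{id},T)(\pi(x,y))=(\mathrm{id},T)(x)=(x,T(x))$,
so the identity reads $(x,T(x))=(x,y)$, which forces $y=T(x)$. This is
the second conclusion of the lemma. For the first conclusion, apply
randomness conservation for $\pi$: since $(x,y)$ is
$\mu_{(\mathrm{id},T)}$-computably random and $\pi$ conserves computable
randomness into $(2^{\mathbb{N}},\mu)$, the image $\pi(x,y)=x$ is
$\mu$-computably random.

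The main subtlety I expect is making the domain bookkeeping precise so
that Lemma~\ref{lem:isomorphism} genuinely applies. The lemma requires
the two composition identities to hold only almost everywhere (with
respect to the appropriate measures), and I must confirm that
$\operatorname{dom}(\mathrm{id},T)=\operatorname{dom}T$ has
$\mu$-measure one and that the graph of $T$ carries full
$\mu_{(\mathrm{id},T)}$-measure, so that $\pi$ inverts $(\mathrm{id},T)$
off a $\mu_{(\mathrm{id},T)}$-null set. Everything else is a direct
substitution into the conclusion of Lemma~\ref{lem:isomorphism}; the
real content is that that lemma upgrades the a.e.~inversion to a genuine
pointwise inversion at computably random points, which is exactly what is
needed to pin down $y=T(x)$ rather than merely $y=T(x)$ almost surely.
```
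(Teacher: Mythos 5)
Your proof is correct and takes essentially the same route as the paper's: both apply Lemma~\ref{lem:isomorphism} to the pair $F=(\mathrm{id},T)$ and $G\colon(x,y)\mapsto x$, using randomness conservation under $G$ to conclude that $x$ is $\mu$-computably random, and the lemma's exact (not merely a.e.)~inversion at computably random points, $(\mathrm{id},T)(\pi(x,y))=(x,y)$, to force $y=T(x)$. Your explicit checks of measure-preservation and the a.e.~inversion identities simply spell out what the paper compresses into ``notice that $(\mathrm{id},T)$ and its inverse $(x,y)\mapsto x$ satisfy the conditions of Lemma~\ref{lem:isomorphism}.''
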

\begin{proof}
Clearly $(\mathrm{id},T)$ is a $\mu$-a.e.~computable map. Moreover,
notice that $(\mathrm{id},T)$ and its inverse $(x,y)\mapsto x$ satisfy
the conditions of Lemma~\ref{lem:isomorphism}. Therefore if $(x,y)$
is $\mu_{(\mathrm{id},T)}$-computably random, then $x$ is $\mu$-computably
random. By the composition $(x,y)\mapsto x\mapsto(x,T(x))$, we have
that $y=T(x)$.
\end{proof}
The main idea of the proof of Theorem~\ref{thm:CR-nrfn} is as follows.
The measure $\mu_{(\mathrm{id},T)}$ is supported on the graph of
$T$, 
\[
\{(x,T(x)):x\in2^{\mathbb{N}}\}.
\]
Therefore, given a $\mu_{T}$-computably random $y$, by this last
lemma, it is sufficient to find some $x$ which makes $(x,y)$ $\mu_{(\mathrm{id},T)}$-computably
random. That is the goal of the rest of this section.
\begin{lem}
\label{lem:cond-measure}Let $\mu$ be a computable probability measure
on $2^{\mathbb{N}}\times2^{\mathbb{N}}$ with second marginal $\mu_{2}$
(that is $\mu_{2}(\tau)=\mu(\varepsilon\times\tau)$). Assume $y\in2^{\mathbb{N}}$
is $\mu_{2}$-computably random. The following properties hold.
\begin{enumerate}
\item For a fixed $\tau\in2^{*}$,\textup{ $\mu(\cdot\times\tau)$ is a
measure, that is
\[
\mu(\sigma0\times\tau)+\mu(\sigma1\times\tau)=\mu(\sigma\times\tau)\qquad(\sigma\in2^{*}).
\]
}
\item For a fixed $\sigma\in2^{*}$, \textup{$\mu(\sigma\times\cdot)$ is
a measure.}
\item The following limit converges for each $\sigma\in2^{*}$, 
\[
\mu(\sigma|y):=\lim_{n}\frac{\mu(\sigma\times y{\upharpoonright_{n}})}{\mu_{2}(y{\upharpoonright_{n}})}.
\]

\item The function $\mu(\cdot|y)$ defines a probability measure, that is
$\mu(\varepsilon|y)=1$ and 
\[
\mu(\sigma0|y)+\mu(\sigma1|y)=\mu(\sigma|y)\qquad(\sigma\in2^{*}).
\]

\item For a continuous function $f\colon2^{\mathbb{N}}\times2^{\mathbb{N}}\rightarrow\mathbb{R}$,
if $f^{y}=f(\cdot,y)$ then
\[
\int f^{y}\,d\mu(\cdot|y)=\lim_{n}\frac{\int_{[\varepsilon]\times[y{\upharpoonright_{n}}]}f\,d\mu}{\mu_{2}(y{\upharpoonright_{n}})}.
\]

\item For a nonnegative lower semicontinuous function $t\colon2^{\mathbb{N}}\times2^{\mathbb{N}}\rightarrow[0,\infty]$,
if $t^{y}=t(\cdot,y)$ then
\[
\int t^{y}\,d\mu(\cdot|y)\leq\lim_{n}\frac{\int_{[\varepsilon]\times[y{\upharpoonright_{n}}]}t\,d\mu}{\mu_{2}(y{\upharpoonright_{n}})}.
\]

\end{enumerate}
\end{lem}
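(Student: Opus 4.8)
The plan is to treat the six parts in order, since each feeds into the next. Parts (1) and (2) are the easiest: they merely assert that the section functions $\mu(\cdot\times\tau)$ and $\mu(\sigma\times\cdot)$ are measures on $2^{\mathbb{N}}$. These follow directly from the countable additivity of $\mu$ on $2^{\mathbb{N}}\times2^{\mathbb{N}}$, since $[\sigma 0]\times[\tau]$ and $[\sigma 1]\times[\tau]$ partition $[\sigma]\times[\tau]$; no randomness hypothesis is needed here.

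The substance is in part (3), the convergence of the conditional measure. First I would fix $\sigma\in2^{*}$ and define the two set functions $\nu_\sigma(\tau):=\mu(\sigma\times\tau)$ and $\mu_2(\tau)=\mu(\varepsilon\times\tau)$. By part (2), $\nu_\sigma$ is a measure on the second coordinate, and since $\mu(\sigma\times\tau)\le\mu(\varepsilon\times\tau)$, it is dominated by $\mu_2$ and hence a (sub)probability measure; both are computable uniformly in $\sigma$. The limit in question is exactly the martingale ratio $\nu_\sigma(y\upharpoonright_n)/\mu_2(y\upharpoonright_n)$ from Lemma~\ref{lem:CR-standard-def}. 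Since $y$ is $\mu_2$-computably random, Lemma~\ref{lem:doob} (the effective Doob convergence theorem) gives that this ratio converges, which is precisely the assertion of (3). I expect this invocation of Lemma~\ref{lem:doob} to be the main point of the argument: it is where computable randomness of $y$ is genuinely used, and it is what makes the pointwise definition of $\mu(\sigma|y)$ legitimate.

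Part (4) then follows by passing the additivity of parts (1)--(2) through the limit in (3): for each fixed $n$, $\mu(\sigma 0\times y\upharpoonright_n)+\mu(\sigma 1\times y\upharpoonright_n)=\mu(\sigma\times y\upharpoonright_n)$, and dividing by $\mu_2(y\upharpoonright_n)$ and taking $n\to\infty$ (using that each limit exists by (3)) yields the finite additivity of $\mu(\cdot|y)$; normalization $\mu(\varepsilon|y)=1$ is immediate since the ratio is identically $1$. A finitely additive nonnegative function on cylinders with $\mu(\varepsilon|y)=1$ extends to a Borel probability measure on $2^{\mathbb{N}}$, so $\mu(\cdot|y)$ is a genuine measure.

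For part (5), I would first verify the formula for indicator functions $f=\mathbf{1}_{[\sigma]\times 2^{\mathbb{N}}}$, where it reduces to the definition in (3), then for simple functions of the form $\mathbf{1}_{[\sigma]}$ depending only on the first coordinate by linearity, and finally for an arbitrary continuous $f$ by uniform approximation: since $f$ is uniformly continuous on the compact space, it is a uniform limit of functions depending on finitely many coordinates, and the right-hand limit behaves well under uniform convergence because $\mu_2(y\upharpoonright_n)$ appears in every denominator identically. Part (6) is the lower-semicontinuous analogue, and here I would write $t$ as the increasing limit of a sequence of nonnegative continuous functions $t_k$, apply (5) to each $t_k$, and pass to the supremum. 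The inequality (rather than equality) in (6) arises precisely because the monotone convergence theorem applies cleanly on the left (giving $\int t^y\,d\mu(\cdot|y)=\sup_k\int t_k^y\,d\mu(\cdot|y)$), whereas on the right one only has, for each $k$, that $\int t_k^y\,d\mu(\cdot|y)=\lim_n(\cdots)\le\liminf_n \int_{[\varepsilon]\times[y\upharpoonright_n]} t\,d\mu/\mu_2(y\upharpoonright_n)$; taking the supremum over $k$ preserves only the inequality. The only delicacy worth flagging is ensuring the limit on the right-hand side of (6) exists (or interpreting it as $\liminf$); this is handled by monotonicity of the integrands in $n$ together with the convergence already established in (3) for the continuous approximants.
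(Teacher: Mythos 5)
Your overall architecture coincides with the paper's: (1)--(2) are immediate from additivity of $\mu$; (3) is exactly the paper's argument, applying Lemma~\ref{lem:doob} to the computable measure $\tau\mapsto\mu(\sigma\times\tau)$ (dominated by $\mu_2$), which is indeed the one place computable randomness of $y$ is used; (4) passes additivity through the limits from (3); (5) goes via step functions and sup-norm density; and (6) via monotone approximation of the lower semicontinuous $t$ by continuous functions plus a Fatou-type inequality. For (6) the paper writes $t=\sum_k f_k$ with nonnegative continuous $f_k$ and uses the monotone convergence theorem on the left and Fatou's lemma for sums on the right; your increasing-sup formulation $t=\sup_k t_k$ is interchangeable with this. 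Your caveat about reading the right-hand side of (6) as a $\liminf$ is legitimate --- the paper writes $\lim$ without comment, and in the one place the lemma is used (Lemma~\ref{lem:vL}) the $\liminf$ reading suffices, since there the ratio is dominated termwise by $\nu(\varepsilon\times y{\upharpoonright_n})/\mu_2(y{\upharpoonright_n})$, which converges by Lemma~\ref{lem:doob}.

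There is, however, one step in your (5) that does not close as written. You verify the identity only for simple functions depending on the \emph{first} coordinate, but then invoke uniform approximation of $f$ by ``functions depending on finitely many coordinates'' --- and such functions in general depend on the second coordinate as well; a continuous $f(x,y)$ (e.g.\ $f(x,y)=y(0)$) is not a uniform limit of functions of $x$ alone. The base case you actually need, as in the paper's proof, is the product cylinder $f=\mathbf{1}_{[\sigma]\times[\tau]}$: then $f^y=\mathbf{1}_{[\sigma]}$ if $y\in[\tau]$ and $f^y=0$ otherwise, and for $n\geq|\tau|$ either $[y{\upharpoonright_n}]\subseteq[\tau]$, in which case the ratio reduces to $\mu(\sigma\times y{\upharpoonright_n})/\mu_2(y{\upharpoonright_n})\rightarrow\mu(\sigma|y)$ by (3), or $[y{\upharpoonright_n}]\cap[\tau]=\varnothing$, in which case both sides vanish. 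With that case added, linearity plus your sup-norm argument --- which is sound, since the common denominator $\mu_2(y{\upharpoonright_n})$ normalizes the right side and $\mu(\cdot|y)$ is a probability measure by (4), so a uniform $\varepsilon$-perturbation of $f$ moves both sides by at most $\varepsilon$ --- completes (5) exactly as in the paper. Everything else in your proposal is correct and matches the paper's route.
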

\begin{proof}
(1) and (2) are apparent. Then (3) follows from (2) and Lemma~\ref{lem:doob}.
Then (4) follows from (1) and the definition of $\mu(\cdot|y)$.

As for (5), first consider the case where $f$ is a step function
of the form $\sum_{i=0}^{k}a_{i}\mathbf{1}_{[\sigma_{i}]}$. This
case follows from (4). Since such step functions are dense in the
continuous functions under the norm $\|f\|=\sup_{x}f(x)$, we have
the result.

As for (6), $t=\sum_{k}f_{k}$ for a sequence of continuous nonnegative
$f_{k}$. Then we apply the monotone convergence theorem (MCT) for
integrals and Fatou's lemma for sums,
\begin{multline*}
\int t^{y}\,d\mu(\cdot|y)=\int\sum_{k}f_{k}^{y}\,d\mu(\cdot|y)\overset{\textrm{MCT}}{=}\sum_{k}\int f_{k}^{y}\,d\mu(\cdot|y)\\
=\sum_{k}\lim_{n}\frac{\int_{[\varepsilon]\times[y{\upharpoonright_{n}}]}f_{k}\,d\mu}{\mu_{2}(y{\upharpoonright_{n}})}\overset{\textrm{Fatou}}{\leq}\lim_{n}\sum_{k}\frac{\int_{[\varepsilon]\times[y{\upharpoonright_{n}}]}f_{k}\,d\mu}{\mu_{2}(y{\upharpoonright_{n}})}=\lim_{n}\frac{\int_{[\varepsilon]\times[y{\upharpoonright_{n}}]}t\,d\mu}{\mu_{2}(y{\upharpoonright_{n}})}.\qedhere
\end{multline*}

\end{proof}

\begin{defn}[{\cite[Def.~2.1]{Kjos-Hanssen:2010aa}\cite[Def.~5.37]{Bienvenu2011}}]
Let $\mu$ be a probability measure on $2^{\mathbb{N}}$ which may
not be computable. A sequence $x\in2^{\mathbb{N}}$ is \emph{blind $\mu$-Martin-Löf random}
(or \emph{$\mu$-Hippocratic random}) \emph{relative to $y\in2^{\mathbb{N}}$}
if $t(x)<\infty$ for all $t\colon2^{\mathbb{N}}\rightarrow[0,\infty]$
which are lower semicomputable relative to $y$ such that $\int t\,d\mu<\infty$.
\end{defn}
Finally, we have the tools to find some $x$ to pair with $y$. What
follows is a variation of van Lambalgen's theorem, similar to that
given by Takahashi \cite[Thm.~5.2]{Takahashi:2008}.
\begin{lem}
\label{lem:vL}Let $\mu$ be a computable measure on $2^{\mathbb{N}}\times2^{\mathbb{N}}$.
Let $y$ be $\mu_{2}$-computably random. Let $x$ be blind $\mu(\cdot|y)$-Martin-Löf
random relative to $y$. Then $(x,y)$ is $\mu$-computably random.\end{lem}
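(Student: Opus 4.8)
The plan is to verify the integral-test characterization of $\mu$-computable randomness on the product space supplied by Proposition~\ref{prop:CR-2-dim}. So I would fix an arbitrary test pair: a lower semicomputable $t\colon2^{\mathbb{N}}\times2^{\mathbb{N}}\rightarrow[0,\infty]$ and a computable probability measure $\nu$ on $2^{\mathbb{N}}\times2^{\mathbb{N}}$ satisfying $\int_{[\sigma]\times[\tau]}t\,d\mu\leq\nu(\sigma\times\tau)$ for all $\sigma,\tau\in2^{*}$, and show that $t(x,y)<\infty$. The whole strategy is to slice at the second coordinate and transfer the problem to the conditional measure $\mu(\cdot|y)$, where the blind randomness of $x$ can be applied.

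First I would pass to the $y$-slice $t^{y}=t(\cdot,y)$. Since $t$ is lower semicomputable, $t^{y}$ is lower semicomputable relative to $y$ (run the approximations to $t$ with $y$ as an oracle). Because $y$ is $\mu_{2}$-computably random, the conditional measure $\mu(\cdot|y)$ is a well-defined probability measure and the bound in part~(6) of Lemma~\ref{lem:cond-measure} applies; combining it with the test-pair inequality specialized to $\sigma=\varepsilon$ gives
\begin{equation*}
\int t^{y}\,d\mu(\cdot|y)\leq\lim_{n}\frac{\int_{[\varepsilon]\times[y{\upharpoonright_{n}}]}t\,d\mu}{\mu_{2}(y{\upharpoonright_{n}})}\leq\lim_{n}\frac{\nu_{2}(y{\upharpoonright_{n}})}{\mu_{2}(y{\upharpoonright_{n}})},
\end{equation*}
where $\nu_{2}(\tau)=\nu(\varepsilon\times\tau)$ is the second marginal of $\nu$, which is again a computable probability measure.

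Next I would argue that the right-hand limit is finite. Since $\nu_{2}$ is a computable measure and $y$ is $\mu_{2}$-computably random, Lemma~\ref{lem:doob} guarantees that $\lim_{n}\nu_{2}(y{\upharpoonright_{n}})/\mu_{2}(y{\upharpoonright_{n}})$ converges, and Lemma~\ref{lem:CR-standard-def} guarantees that its liminf is finite; since the sequence converges, the limit equals this finite liminf. Hence $\int t^{y}\,d\mu(\cdot|y)<\infty$. Finally, $t^{y}$ is lower semicomputable relative to $y$ and has finite $\mu(\cdot|y)$-integral, so the hypothesis that $x$ is blind $\mu(\cdot|y)$-Martin-L\"of random relative to $y$ yields $t^{y}(x)<\infty$, that is $t(x,y)<\infty$. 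As the test pair $(t,\nu)$ was arbitrary, $(x,y)$ is $\mu$-computably random.

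I do not expect a serious obstacle here, since the substantive analytic inequality has already been isolated as Lemma~\ref{lem:cond-measure}(6); the remaining work is essentially bookkeeping. The two points that require care are confirming that $t^{y}$ really is lower semicomputable relative to $y$ (immediate from the definition of lower semicomputability applied with $y$ as an oracle) and ensuring that the conditional-measure apparatus of Lemma~\ref{lem:cond-measure} is available, which holds precisely because $y$ is assumed $\mu_{2}$-computably random. The one genuinely load-bearing use of a hypothesis other than these is the appeal to Doob's theorem together with $\mu_{2}$-computable randomness to pin down finiteness of the martingale limit.
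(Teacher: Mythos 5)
Your proof is correct and takes essentially the same approach as the paper's: slice the test at $y$ to get $t^{y}$, bound $\int t^{y}\,d\mu(\cdot|y)$ via Lemma~\ref{lem:cond-measure}(6) combined with the test inequality at $\sigma=\varepsilon$, use Lemma~\ref{lem:doob} to see the martingale limit is finite, and conclude by the blind randomness of $x$. The only difference is presentational --- the paper argues by contraposition from a test with $t(x,y)=\infty$, while you verify every test directly, and your extra appeal to Lemma~\ref{lem:CR-standard-def} for finiteness of the limit is harmless bookkeeping.
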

\begin{proof}
Assume $y$ is $\mu_{2}$-computably random, but $(x,y)$ is not $\mu$-computably
random. Then by Proposition~\ref{prop:CR-2-dim} there is a lower
semicomputable function $t$ and a computable measure $\nu$ such
that $t(x,y)=\infty$ and $\int_{[\sigma]\times[\tau]}t\,d\mu\leq\nu(\sigma\times\tau)$.

Let $t^{y}=t(\cdot,y)$. Then $t^{y}$ is lower semicomputable relative
to $y$. Moreover, since $t$ is lower semicontinuous, we have by
Lemma~\ref{lem:cond-measure} that 
\[
\int t^{y}\,d\mu(\cdot|y)\leq\lim_{n}\frac{\int_{[\varepsilon]\times[y{\upharpoonright_{n}}]}t\,d\mu}{\mu_{2}(y{\upharpoonright_{n}})}\leq\lim_{n}\frac{\nu(\varepsilon\times y{\upharpoonright_{n}})}{\mu_{2}(y{\upharpoonright_{n}})}
\]
where the right-hand side converges to a finite value by Lemma~\ref{lem:doob}
since $y$ is $\mu_{2}$-computably random. Therefore, $x$ is not
blind Martin-Löf random relative to $y$ since $\int t^{y}\,d\mu(\cdot|y)<\infty$
and $t^{y}(x)=t(x,y)=\infty$.
\end{proof}
The proof of Theorem~\ref{thm:CR-nrfn} easily follows.
\begin{proof}[Proof of Theorem~\ref{thm:CR-nrfn}.]
Let $\mu$ be a computable probability measure on $2^{\mathbb{N}}$,
let $T\colon(2^{\mathbb{N}},\mu)\rightarrow2^{\mathbb{N}}$ be an
a.e.~computable map, and let $y\in2^{\mathbb{N}}$ be $\mu_{T}$-computably
random. We want to find some $\mu$-computably random $x\in2^{\mathbb{N}}$
such that $y=T(x)$.

Let $\nu=\mu_{(\mathrm{id},T)}$, and let $x$ be blind $\nu(\cdot|y)$-Martin-Löf
random relative to $y$ (there are $\nu(\cdot|y)$-measure-one many,
so there is at least one). By Lemma~\ref{lem:vL}, $(x,y)$ is $\mu_{(\mathrm{id},T)}$-computably
random. By Lemma~\ref{lem:graph}, $x$ is computably random and
$y=T(x)$.
\end{proof}

\section{Three generalizations to theorem~\ref{thm:CR-nrfn}\label{sec:3-generalizations}}

Algorithmic randomness is becoming more focused around the ideas in
probability theory and measure theory. Therefore the tools in algorithmic
randomness need to be able to handle a larger variety of maps, a larger
variety of spaces, and even a larger variety of relativizations. In
this section we generalize Theorem~\ref{thm:CR-nrfn} to Schnorr
layerwise computable maps, arbitrary computable metric spaces, and
uniform relativization. This section is independent of the later sections
and may be skipped.

\subsection{Generalizing the types of maps}

The main results of this subsection are generalizations of Theorem~\ref{thm:CR-nrfn}
and Lemma~\ref{lem:isomorphism} to Schnorr layerwise computable
maps. Before, proving the theorems, let us introduce Schnorr layerwise
computable maps, and explain why they are important.

\subsubsection{Schnorr layerwise computable maps}

So far, the results in this paper have been phrased in terms of almost-everywhere
computable maps. These maps are easy for a computability theorist
to understand and they are sufficient for many purposes. However,
almost-everywhere computable maps are only almost-everywhere continuous
and therefore are not adequate computable representations of the measurable
maps found in probability theory.\footnote{For example, the map $F\colon(2^{\mathbb{N}},\lambda)\rightarrow2$
which takes a sequence $x\in2^{\mathbb{N}}$ and returns $1$ if $\sup_{n}\frac{1}{n}\sum_{k=0}^{n-1}x(n)>2/3$
and $0$ otherwise is not almost-everywhere continuous, and therefore
not almost-everywhere computable (even relative to an oracle). (We
thank Bjørn Kjos-Hanssen for this example.)}

Instead, we need a notion of an ``effectively $\mu$-measurable function''
$F\colon(2^{\mathbb{N}},\mu)\rightarrow2^{\mathbb{N}}$. There are
many approaches in the literature. One approach, dating back to the
Russian constructivist Šanin \cite[\S 15.4]{Sanin:1968dq}, is to
use the topology of convergence in measure, which is metrizable via
many equivalent metrics, including $\rho(F,G)=\int d(F(x),G(x))\,d\mu(x)$
where $d$ is the usual metric on $2^{\mathbb{N}}$. (This is a modification
of the usual $L^{1}$-metric.\footnote{The space $(2^{\mathbb{N}},d)$ has bounded diameter. For a general
codomain $(\mathbb{Y},d)$, use the metric $\rho(F,G)=\int\min\{d(F(x),G(x)),1\}\,d\mu(x)$.}) If $\mu$ is a Borel measure, the space $L^{0}(2^{\mathbb{N}},\mu;2^{\mathbb{N}})$
of measurable functions $F\colon(2^{\mathbb{N}},\mu)\rightarrow2^{\mathbb{N}}$
modulo $\mu$-almost-everywhere equivalence is a Polish space.
\begin{defn}
Fix a computable probability measure $\mu$. A function $F\in L^{0}(2^{\mathbb{N}},\mu;2^{\mathbb{N}})$
is \emph{effectively measurable} if there is a computable sequence
of a.e.~computable functions $F_{n}\colon(2^{\mathbb{N}},\mu)\rightarrow2^{\mathbb{N}}$
such that $\rho(F_{n},F_{m})\leq2^{-m}$ for all $n>m$ and $F=\lim_{n}F_{n}$
(where the limit is in the metric $\rho$).

Let $\widetilde{F}$ be the pointwise limit of $F_{n}$ (when it converges).\footnote{The sequence $F_{n}$ converges $\mu$-almost-everywhere since $F_{n}$
converges at a geometric rate of convergence in $\rho$. Also $\widetilde{F}$
is possibly partial, since there may be a measure zero set of $x\in2^{\mathbb{N}}$
where $\lim_{n}F_{n}(x)$ does not converge. }  Call $\widetilde{F}$ the \emph{canonical representative} of $F$.
\end{defn}
Surprisingly the canonical representative is always defined on Schnorr
randoms. Moreover, if $F$ and $G$ are $\mu$-a.e.~equal effectively
measurable maps, then $\widetilde{F}(x)=\widetilde{G}(x)$ on $\mu$-Schnorr
randoms $x$ \cite[p.~41, Prop.~3.18]{Rute:2013pd} (also see \cite[Thm.~3.9]{Pathak:2014fk}).
Finally, Rute \cite[p.~41, Prop~3.23]{Rute:2013pd} showed that these
representative functions $\widetilde{F}$ are equivalent to the Schnorr
layerwise computable functions of Miyabe \cite{Miyabe:2013uq}. (This
equivalence is an effective version of Lusin's theorem.)
\begin{defn}
\label{def:Sch-layerwise}A measurable map $F\colon(\mathbb{X},\mu)\rightarrow\mathbb{Y}$
is \emph{Schnorr layerwise computable} if there is a computable sequence
of effectively closed (that is $\Pi_{1}^{0}$) sets $C_{n}\subseteq\mathbb{X}$
and a computable sequence of computable functions $F_{n}\colon C_{n}\rightarrow\mathbb{Y}$
such that
\begin{enumerate}
\item $\mu(C_{n})\geq1-2^{-n}$ and $\mu(C_{n})$ is computable in $n$,
and
\item $F_{n}=F\upharpoonright C_{n}$ for all $n$.
\end{enumerate}
\end{defn}
The remainder of the results in this section will be expressed in
terms of Schnorr layerwise computability.

\subsubsection{No-randomness-from-nothing for computable randomness and Schnorr
layerwise computable functions}

In order to extend Theorem~\ref{thm:CR-nrfn} to Schnorr layerwise
computable functions, it suffices to prove a Schnorr layerwise computable
version of Lemma~\ref{lem:graph}. In order to do that, we need some
lemmas. 

Notice that the definitions of Schnorr randomness and computable randomness
naturally extend to all computable measures on $2^{\mathbb{N}}$,
not just probability measures. This saves us the step of having to
normalize a measure into a probability measure, as in this next lemma.
\begin{lem}
\label{lem:CR-conditioned-on-subset}Let $C$ be an effectively closed
subset of $2^{\mathbb{N}}$ such that $\mu(C)$ is computable and
positive. Let $\nu$ be the measure $\nu(\sigma)=\mu(C\cap[\sigma])$.
This measure is a computable measure, and for any $x\in C$, if $x$
is $\mu$-computably random then $x$ is $\nu$-computably random.\end{lem}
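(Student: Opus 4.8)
The plan is to separate the statement into its two assertions---that $\nu$ is computable, and that $\nu$-computable randomness is inherited by points of $C$---and to reduce the second assertion to a single local density estimate, which will be the heart of the argument.

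For the computability of $\nu$, I would fix a computable decreasing sequence of clopen sets $C_{s}$ with $C=\bigcap_{s}C_{s}$, so that $\nu(\sigma)=\lim_{s}\mu(C_{s}\cap[\sigma])$ exhibits $\nu(\sigma)$ as upper semicomputable (a decreasing computable limit). Since for fixed $n$ the finitely many values $\{\nu(\tau):|\tau|=n\}$ are each upper semicomputable and sum to the computable number $\mu(C)=\sum_{|\tau|=n}\nu(\tau)$, each $\nu(\sigma)=\mu(C)-\sum_{\tau\neq\sigma}\nu(\tau)$ is also lower semicomputable, hence computable. This step is routine.

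For the randomness transfer, fix $x\in C$ that is $\mu$-computably random and set $d_{n}=\nu(x{\upharpoonright_n})/\mu(x{\upharpoonright_n})$. By Lemma~\ref{lem:doob} applied to the computable measure $\nu$, the limit $d_{\infty}=\lim_{n}d_{n}$ exists. I would then verify the two clauses of Lemma~\ref{lem:CR-standard-def} for $\nu$: for any computable measure $\rho$, writing $\rho(x{\upharpoonright_n})/\nu(x{\upharpoonright_n})=(\rho(x{\upharpoonright_n})/\mu(x{\upharpoonright_n}))\cdot d_{n}^{-1}$, the first factor converges to a finite value by Lemma~\ref{lem:doob} while $d_{n}^{-1}\to d_{\infty}^{-1}$, so the product has finite $\liminf$ as soon as $d_{\infty}>0$; moreover $\nu(x{\upharpoonright_n})$ is nonincreasing in $n$, so $d_{\infty}>0$ also forces $\nu(x{\upharpoonright_n})>0$ for every $n$ (otherwise $d_{n}$ would be eventually $0$). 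Thus everything reduces to the claim $d_{\infty}>0$.

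Proving $d_{\infty}>0$, in fact $d_{\infty}=1$, is the main obstacle. Using the clopen sets above I would, for each $k$, computably choose $s(k)$ with $\mu(C_{s(k)})-\mu(C)<2^{-k}$ and set $W_{k}=C_{s(k)}\smallsetminus C$, whose measure $\theta_{k}(\sigma):=\mu(W_{k}\cap[\sigma])=\mu(C_{s(k)}\cap[\sigma])-\nu(\sigma)$ is computable uniformly in $k$. Because $C_{s(k)}$ is clopen and $x\in C\subseteq C_{s(k)}$, every sufficiently long cylinder $[x{\upharpoonright_n}]$ lies inside $C_{s(k)}$, whence $\mu(C\cap[x{\upharpoonright_n}])=\mu(x{\upharpoonright_n})-\theta_{k}(x{\upharpoonright_n})$ and therefore $d_{\infty}=1-e_{k}$, where $e_{k}=\lim_{n}\theta_{k}(x{\upharpoonright_n})/\mu(x{\upharpoonright_n})$ exists by Lemma~\ref{lem:doob}; in particular $e_{k}=1-d_{\infty}$ is independent of $k$. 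Finally I would form the finite computable measure $\Theta=\sum_{k}\theta_{k}$ (finite since $\sum_{k}\mu(W_{k})<\sum_{k}2^{-k}$); Lemma~\ref{lem:doob} makes $\Theta(x{\upharpoonright_n})/\mu(x{\upharpoonright_n})$ converge to a finite value, and Fatou's lemma for the sum over $k$ gives $\sum_{k}e_{k}\leq\lim_{n}\Theta(x{\upharpoonright_n})/\mu(x{\upharpoonright_n})<\infty$. Since the $e_{k}$ are all equal, this is possible only if $e_{k}=0$, i.e.\ $d_{\infty}=1$. The difficulty here is genuine: simple betting strategies built from $\nu$ or from $\mu$ restricted to $C^{c}$ stay bounded along $x$ and do not detect a density drop, so the argument instead extracts the contradiction by summing the small-measure auxiliary measures $\theta_{k}$ and playing their total against the convergence guaranteed by Doob's theorem.
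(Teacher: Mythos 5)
Your proof is correct, but it takes a genuinely different route from the paper's. The paper disposes of the key density estimate by citing the classical Lebesgue density theorem and then observing that the failure set $N=\{x\in C:\limsup_n \mu(C\cap[x{\upharpoonright_n}])/\mu(x{\upharpoonright_n}})<1\}$ is a $\Sigma_2^0$ $\mu$-null set (this is where the computability of $\nu$ enters), hence avoided by every $\mu$-Kurtz random and in particular by every $\mu$-computably random point; this yields only $\limsup_n \nu(x{\upharpoonright_n})/\mu(x{\upharpoonright_n})=1$, which already suffices for the same final $\liminf$ computation you perform. You avoid both the density theorem and Kurtz randomness: the clopen approximations $C_{s(k)}$, the uniformly computable tail measures $\theta_k$ of mass $<2^{-k}$, and the Doob-plus-Fatou argument applied to $\Theta=\sum_k\theta_k$ give a self-contained martingale proof, and they deliver the stronger conclusion that the full limit $d_\infty$ equals $1$ --- exactly the effective density statement that the paper relegates to a footnote (citing Rute's thesis, Thm.~6.3, for Schnorr randoms) with the remark that it ``is not needed.'' What each approach buys: the paper's proof is shorter, at the cost of importing classical measure theory plus the arithmetical-complexity trick; yours stays entirely inside the framework of Lemmas~\ref{lem:doob} and~\ref{lem:CR-standard-def}, proves more (pointwise density one at every computably random member of $C$), and makes transparent where the hypothesis that $\mu(C)$ is computable is used, namely in computing $s(k)$ and the $\theta_k$. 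Two minor points, neither a gap: your applications of Lemma~\ref{lem:doob} to the non-probability measures $\nu$, $\theta_k$, and $\Theta$ are legitimate under the paper's stated convention that computable randomness and its martingale characterization extend to all computable measures (and one can always rescale $\theta_k$ by $2^k$ if one insists on mass at most one); and the computable search for $s(k)$ succeeds precisely because $\mu(C_s)\searrow\mu(C)$ with $\mu(C_s)$ computable uniformly in $s$ and $\mu(C)$ computable by hypothesis.
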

\begin{proof}
Since $\mu(C)$ is computable we can compute $\mu(C\cap[\sigma])$.
By the Lebesgue density theorem, we have for almost-every $x\in C$
that 
\[
\lim_{n}\frac{\mu(C\cap[x{\upharpoonright_{n}}])}{\mu(x{\upharpoonright_{n}})}=1.
\]
Moreover, the set 
\[
N=\left\{ x\in2^{\mathbb{N}}:x\in C\ \textrm{and}\ \limsup_{n}\frac{\mu(C\cap[x{\upharpoonright_{n}}])}{\mu(x{\upharpoonright_{n}})}<1\right\} 
\]
is a $\Sigma_{2}^{0}$ set of $\mu$-measure $0$. Therefore, $N$
does not contain any $\mu$-Kurtz randoms. Hence if $x\in C$ and
$x$ is $\mu$-computably random (so $\mu$-Kurtz random), then $\limsup_{n}\frac{\mu(C\cap[x{\upharpoonright_{n}}])}{\mu(x{\upharpoonright_{n}})}=1$.\footnote{Actually, one can show that the limit is $1$ for all $\mu$-Schnorr
randoms \cite[p.~51, Thm~6.3]{Rute:2013pd}, and hence for all $\mu$-computable
randoms, but this is not needed.} Now, let $\rho$ be a computable measure. Then for any $\mu$-computable
random $x\in C$, we have that $\rho(x{\upharpoonright_{n}})/\mu(x{\upharpoonright_{n}})$
converges to a finite number (Lemma~\ref{lem:doob}). Hence 
\begin{align*}
\liminf_{n}\frac{\rho(x{\upharpoonright_{n}})}{\nu(x{\upharpoonright_{n}})} & =\liminf_{n}\frac{\rho(x{\upharpoonright_{n}})}{\mu(C\cap[x{\upharpoonright_{n}}])}\\
 & =\liminf_{n}\frac{\rho(x{\upharpoonright_{n}})}{\mu(x{\upharpoonright_{n}})}\cdot\frac{\mu(x{\upharpoonright_{n}})}{\mu(C\cap[x{\upharpoonright_{n}}])}\\
 & =\lim_{n}\frac{\rho(x{\upharpoonright_{n}})}{\mu(x{\upharpoonright_{n}})}\cdot\liminf_{n}\frac{\mu(x{\upharpoonright_{n}})}{\mu(C\cap[x{\upharpoonright_{n}}])}\\
 & =\lim_{n}\frac{\rho(x{\upharpoonright_{n}})}{\mu(x{\upharpoonright_{n}})}<\infty.
\end{align*}
Since $\rho$ is arbitrary, $x$ is $\nu$-computably random (Lemma~\ref{lem:CR-standard-def}).
\end{proof}

\begin{lem}
\label{lem:CR-smaller-to-bigger}Assume $\mu\leq\nu$ (that is $\mu(A)\leq\nu(A)$
for all measurable sets $A\subseteq2^{\mathbb{N}}$). Then if $x$
is $\mu$-computably random, then $x$ is $\nu$-computably random.\end{lem}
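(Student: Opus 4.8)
The plan is to argue directly from the integral-test characterization of computable randomness in Definition~\ref{def:randomness}(2), rather than the martingale form of Lemma~\ref{lem:CR-standard-def}, since the integral test makes the monotonicity argument transparent and spares us the bookkeeping about positivity of cylinders. Recall that $x$ fails to be $\nu$-computably random exactly when there is a lower semicomputable $t\colon 2^{\mathbb{N}}\to[0,\infty]$ and a computable probability measure $\kappa$ with $\int_{[\sigma]} t\,d\nu \le \kappa(\sigma)$ for every $\sigma\in 2^*$ and $t(x)=\infty$.

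The one analytic fact I would isolate is that $\mu\le\nu$ implies $\int_{[\sigma]} t\,d\mu \le \int_{[\sigma]} t\,d\nu$ for every nonnegative measurable $t$. This holds for indicator functions by the hypothesis $\mu(A)\le\nu(A)$, extends to simple functions by linearity, and then to all nonnegative $t$ by the monotone convergence theorem; equivalently, $\nu-\mu$ is a finite nonnegative measure and $\int t\,d(\nu-\mu)\ge 0$.

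Granting this, the result follows immediately by contraposition. Suppose $x$ is not $\nu$-computably random, and let $(t,\kappa)$ be a witnessing $\nu$-test pair as above. Then for every $\sigma\in 2^*$ we have $\int_{[\sigma]} t\,d\mu \le \int_{[\sigma]} t\,d\nu \le \kappa(\sigma)$, so the very same pair $(t,\kappa)$ is a valid $\mu$-test pair; since $t(x)=\infty$, it witnesses that $x$ is not $\mu$-computably random. This is precisely the contrapositive of the claim.

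I expect essentially no obstacle here; the only points needing a moment's care are that Definition~\ref{def:randomness}(2) is being applied to the finite (sub-probability) measures $\mu$ and $\nu$ rather than to probability measures, which is legitimate given that the randomness definitions extend to all computable measures, and that the inequality runs in the correct direction (a $\nu$-test is automatically a $\mu$-test, but not conversely). As an alternative one could route the argument through Lemma~\ref{lem:CR-standard-def}: for any computable measure $\rho$, the bound $\nu(x\upharpoonright_n)\ge\mu(x\upharpoonright_n)>0$ gives both positivity and $\rho(x\upharpoonright_n)/\nu(x\upharpoonright_n)\le \rho(x\upharpoonright_n)/\mu(x\upharpoonright_n)$, whence $\liminf_n \rho(x\upharpoonright_n)/\nu(x\upharpoonright_n) \le \liminf_n \rho(x\upharpoonright_n)/\mu(x\upharpoonright_n)<\infty$.
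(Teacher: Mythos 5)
Your proof is correct and follows essentially the same route as the paper's: contraposition via the integral-test characterization, observing that $\mu\le\nu$ gives $\int_{[\sigma]}t\,d\mu\le\int_{[\sigma]}t\,d\nu\le\kappa(\sigma)$, so a $\nu$-test pair is automatically a $\mu$-test pair. The extra justification of the monotone integral inequality and the alternative martingale argument are fine but not needed beyond what the paper states.
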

\begin{proof}
Assume $x$ is not $\nu$-computably random. Then there is an integral
test $t$ such that $t(x)=\infty$ and $\int_{A}t\,d\nu\leq\rho(A)$
for some computable measure $\rho$. Then $\int_{A}t\,d\mu\leq\int_{A}t\,d\nu\leq\rho(A)$
and therefore $t$ is a test on $\mu$ as well. Hence $x$ is not
$\mu$-computably random.
\end{proof}

\begin{lem}
\label{lem:injective-slwc}Assume $T\colon(2^{\mathbb{N}},\mu)\rightarrow2^{\mathbb{N}}$
is an injective Schnorr layerwise computable map. Then the pushforward
measure $\mu_{T}$ is computable, and $T$ has an injective Schnorr
layerwise computable inverse $S$. Moreover, both $T$ and $S$ conserve
computable randomness.\end{lem}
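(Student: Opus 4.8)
The plan is to reduce everything to the layers $C_n$ witnessing Schnorr layerwise computability, on each of which $T$ is an honest computable injection, and then to glue. Write $T_n=T\upharpoonright C_n\colon C_n\to 2^{\mathbb{N}}$ for the computable functions from Definition~\ref{def:Sch-layerwise}, so the $C_n$ form a computable sequence of $\Pi_1^0$ (hence effectively compact) sets with $\mu(C_n)\ge 1-2^{-n}$ computable. The first step is the purely computable-analytic fact that a computable injection on an effectively compact set has effectively compact image and a computable inverse, uniformly: the image $D_n:=T(C_n)$ is $\Pi_1^0$ uniformly in $n$, and the set-theoretic inverse $S_n:=T_n^{-1}\colon D_n\to C_n$ is computable uniformly in $n$. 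I expect this to be the main technical obstacle. The delicate points are uniformity and the effectivity of the inversion, which uses that by injectivity the disjoint compact pieces $T([\sigma]\cap C_n)$ (for distinct $\sigma$ of a fixed length) stay a positive distance apart, so one can read off $S_n(y)$ bit by bit by locating a given $y\in D_n$ among them.

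Next I would show $\mu_T$ is computable. Let $\mu^{(n)}$ be the measure $\mu^{(n)}(\sigma)=\mu(C_n\cap[\sigma])$, which is a computable measure by Lemma~\ref{lem:CR-conditioned-on-subset}, and let $\nu^{(n)}$ be its pushforward along the a.e.\ computable map $T_n$, so $\nu^{(n)}$ is computable uniformly in $n$. Since $C_n\subseteq\operatorname{dom}T$ and $T=T_n$ on $C_n$, one has $\nu^{(n)}(B)=\mu(C_n\cap T^{-1}(B))$, whence $0\le\mu_T(B)-\nu^{(n)}(B)\le\mu(2^{\mathbb{N}}\smallsetminus C_n)\le 2^{-n}$ for every Borel $B$. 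Specializing to cylinders, $\nu^{(n)}(\sigma)\to\mu_T(\sigma)$ at the uniform rate $2^{-n}$, so $\mu_T$ is computable.

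To see that $S:=T^{-1}$ is Schnorr layerwise computable for $\mu_T$, I would use $D_n$ and $S_n$ from the first step as the witnessing data. Here injectivity is decisive: if $x\in\operatorname{dom}T$ and $T(x)\in D_n=T(C_n)$, then $x\in C_n$, so $T^{-1}(D_n)\cap\operatorname{dom}T=C_n$ and therefore $\mu_T(D_n)=\mu(C_n)$, which is computable and $\ge 1-2^{-n}$. Since also $S_n=S\upharpoonright D_n$ by construction, the pair $(D_n,S_n)$ meets Definition~\ref{def:Sch-layerwise}, and $S$ is injective. (Without injectivity one could only pin $\mu_T(D_n)$ down to within $2^{-n}$, which is precisely why the hypothesis is needed.) The same computation run for $S$ shows $(\mu_T)_S=\mu$.

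Finally, for conservation of computable randomness I would argue layer by layer. The sets $2^{\mathbb{N}}\smallsetminus C_n$ are $\Sigma_1^0$ with computable measures $\le 2^{-n}$, so $\{2^{\mathbb{N}}\smallsetminus C_n\}_n$ is a Schnorr test and every $\mu$-computable random $x$ lies in some $C_n$; by Lemma~\ref{lem:CR-conditioned-on-subset} such an $x$ is $\mu^{(n)}$-computably random. After normalizing by the positive computable constant $\mu(C_n)$ (which does not affect computable randomness), the maps $T_n$ and $S_n$ are mutually inverse a.e.\ computable measure-preserving maps between $(2^{\mathbb{N}},\mu^{(n)})$ and $(2^{\mathbb{N}},\nu^{(n)})$, so Lemma~\ref{lem:isomorphism} yields that $T(x)=T_n(x)$ is $\nu^{(n)}$-computably random. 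As $\nu^{(n)}\le\mu_T$, Lemma~\ref{lem:CR-smaller-to-bigger} upgrades this to $\mu_T$-computable randomness of $T(x)$. Thus $T$ conserves computable randomness, and running the identical argument with the roles of $T,\mu$ and $S,\mu_T$ interchanged shows that $S$ does as well.
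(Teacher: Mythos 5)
Your proposal is correct and takes essentially the same route as the paper's proof: computable layers $C_n$ with effectively compact images $D_n$, inversion of $T_n$ via singleton preimages (your positive-separation argument is the same effective-compactness fact the paper uses to compute the $\Pi^0_1(y)$ singleton $P_y$), the identity $\mu_T(D_n)=\mu(C_n)$ from injectivity, and conservation via Lemmas~\ref{lem:CR-conditioned-on-subset}, \ref{lem:isomorphism}, and \ref{lem:CR-smaller-to-bigger}. The only cosmetic differences are that you prove computability of $\mu_T$ directly by the uniform $2^{-n}$-approximation where the paper cites Hoyrup--Rojas and Rute, and that you normalize $\mu^{(n)}$ to a probability measure before invoking Lemma~\ref{lem:isomorphism}, whereas the paper relies on its remark that computable randomness extends to non-probability computable measures.
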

\begin{proof}
The computability of the pushforward is proved by Hoyrup and Rojas
\cite[Prop.~4]{Hoyrup:2009fe} and Rute \cite[p.~42, Prop.~3.25]{Rute:2013pd}. 

Let $C_{n}$ and $T_{n}\colon C_{n}\rightarrow2^{\mathbb{N}}$ be
the sequence of effectively closed sets and partial computable functions
as in Definition~\ref{def:Sch-layerwise}. Since $T_{n}$ is total
computable on $C_{n}$ (and $2^{\mathbb{N}}$ is effectively compact),
the image $D_{n}:=T_{n}(C_{n})$ is effectively closed in $n$. Since
$T$ is injective, $\mu_{T}(D_{n})=\mu(T^{-1}(D_{n}))=\mu(T_{n}^{-1}(D_{n}))=\mu(C_{n})$.
For a fixed $n$, we can compute a total computable map $S_{n}\colon D_{n}\rightarrow C_{n}$
given as follows. For each $y\in D_{n}$, we can compute the set $P_{y}=\{x\in C_{n}:T_{n}(x)=y\}$
as a $\Pi_{1}^{0}(y)$ set. (To see that it is $\Pi_{1}^{0}(y)$,
for each $x$, just wait until $x\notin C_{n}$ or $T_{n}(x){\upharpoonright_{k}}\neq y{\upharpoonright_{k}}$
for some $k$.) Since $T_{n}\colon C_{n}\rightarrow D_{n}$ is a bijection,
$P_{y}$ is a singleton $\{x\}$. Let $S_{n}(y)=x$. This $S_{n}$
is computable on $D_{n}$ (again using the compactness of $2^{\mathbb{N}}$).
Combining these $S_{n}$ we have a Schnorr layerwise computable map
$S\colon(2^{\mathbb{N}},\mu_{T})\rightarrow(2^{\mathbb{N}},\mu)$.
This map is injective (on its domain $\bigcup_{n}D_{n}$) since each
of the $S_{n}$ are injective. Moreover, it is the inverse of $T$
since each $S_{n}$ is the inverse of $T_{n}$.

Let $x$ be $\mu$-computably random. Since $x$ is $\mu$-Schnorr
random, there is some $C_{n}$ from the Schnorr layerwise description
of $T$ such that $x_{0}\in C_{n}$ and $T$ is computable on $C_{n}$.
(If $x$ is in no such $C_{n}$, then $x$ is in $\bigcap_{n}\left(2^{\mathbb{N}}\smallsetminus C_{n}\right)$
and cannot be Schnorr random.) Let $\mu_{n}$ be the measure given
by $\mu_{n}(\sigma)=\mu(C_{n}\cap[\sigma])$. By Lemma~\ref{lem:CR-conditioned-on-subset},
$\mu_{n}$ is computable and $x$ is $\mu_{n}$-computably random.
Notice that $T_{n}\colon(2^{\mathbb{N}},\mu_{n})\rightarrow2^{\mathbb{N}}$
is $\mu_{n}$-a.e.~computable and therefore $(\mu_{n})_{T}$ is computable.
Moreover, $T_{n}\colon(2^{\mathbb{N}},\mu_{n})\rightarrow(2^{\mathbb{N}},(\mu_{n})_{T})$
and $S_{n}\colon(2^{\mathbb{N}},(\mu_{n})_{T})\rightarrow(2^{\mathbb{N}},\mu_{n})$
are a.e.~computable measure-preserving inverses as in Lemma~\ref{lem:isomorphism}.
Therefore $T(x)=T_{n}(x)$ is $(\mu_{n})_{T}$-computably random.
Since $(\mu_{n})_{T}\leq\mu_{T}$, we have by Lemma~\ref{lem:CR-smaller-to-bigger},
$T(x)$ is $\mu_{T}$-computably random.
\end{proof}
Now we can give a Schnorr layerwise computable version of Lemma~\ref{lem:graph}.
\begin{lem}
\label{lem:graph-slwc}Let $\mu$ be a computable probability measure
on $2^{\mathbb{N}}$, let $T\colon(2^{\mathbb{N}},\mu)\rightarrow2^{\mathbb{N}}$
be a Schnorr layerwise computable map, and let $(\mathrm{id},T)$
be the map $x\mapsto(x,T(x))$. If $(x,y)$ is $\mu_{(\mathrm{id},T)}$-computably
random, then $x$ is $\mu$-computably random and $y=T(x)$.\end{lem}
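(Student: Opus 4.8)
The plan is to mirror the proof of Lemma~\ref{lem:graph}, replacing the appeal to Lemma~\ref{lem:isomorphism} (which requires a.e.~computable maps) with the new Lemma~\ref{lem:injective-slwc} (which handles injective Schnorr layerwise computable maps). First I would observe that $(\mathrm{id},T)$ is itself injective and Schnorr layerwise computable: injectivity is immediate since the first coordinate already determines the point, and if $C_n, T_n$ witness the Schnorr layerwise computability of $T$ as in Definition~\ref{def:Sch-layerwise}, then the same sets $C_n$ together with the computable maps $x \mapsto (x, T_n(x))$ witness it for $(\mathrm{id},T)$.

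Next I would apply Lemma~\ref{lem:injective-slwc} to $(\mathrm{id},T)$. This gives that the pushforward $\mu_{(\mathrm{id},T)}$ is computable, that $(\mathrm{id},T)$ has an injective Schnorr layerwise computable inverse $S$, and that both $(\mathrm{id},T)$ and $S$ conserve computable randomness. The key step is to identify $S$ concretely: since $\mu_{(\mathrm{id},T)}$ is supported on the graph $\{(x,T(x))\}$, the total computable projection $\pi_1\colon(x,y)\mapsto x$ inverts $(\mathrm{id},T)$ on that graph, so $S = \pi_1$ $\mu_{(\mathrm{id},T)}$-a.e. By the agreement property for effectively measurable maps (two $\mu$-a.e.~equal such maps have the same canonical representative on Schnorr randoms), it follows that $S(x,y) = \pi_1(x,y) = x$ for every $\mu_{(\mathrm{id},T)}$-Schnorr random $(x,y)$.

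Now suppose $(x,y)$ is $\mu_{(\mathrm{id},T)}$-computably random, hence Schnorr random. Then $x = \pi_1(x,y) = S(x,y)$, and since $S$ conserves computable randomness, $x$ is $\mu$-computably random. It remains to show $y = T(x)$. For this I would note that the composition $(\mathrm{id},T)\circ\pi_1$ equals the identity $\mu_{(\mathrm{id},T)}$-a.e.\ (again because the measure lives on the graph), so by the same agreement property these two maps agree on Schnorr randoms; evaluating at $(x,y)$ gives $(x,T(x)) = (\mathrm{id},T)(\pi_1(x,y)) = (x,y)$, i.e.~$y = T(x)$.

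I expect the main obstacle to be the careful passage from almost-everywhere statements to pointwise statements at the random point. Everything hinges on the fact that computable randomness implies Schnorr randomness, together with the agreement-on-Schnorr-randoms property, which is precisely what licenses turning ``$S = \pi_1$ a.e.'' and ``$(\mathrm{id},T)\circ\pi_1 = \mathrm{id}$ a.e.'' into genuine pointwise identities at $(x,y)$. The remaining verifications---that $(\mathrm{id},T)$ is injective and Schnorr layerwise computable, and that the relevant projection and composition are again effectively measurable so the agreement property applies---are routine.
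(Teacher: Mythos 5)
Your proposal is correct and follows essentially the same route as the paper: observe that $(\mathrm{id},T)$ is injective and Schnorr layerwise computable, invoke Lemma~\ref{lem:injective-slwc} to obtain the inverse $S$ together with conservation of computable randomness, identify $S$ with the projection $(x,y)\mapsto x$, and conclude $y=T(x)$ via the composition. The only (harmless) divergence is at the last step: the paper reads the pointwise identities directly off the construction in Lemma~\ref{lem:injective-slwc}, where $S_n$ is an exact inverse of $x\mapsto(x,T_n(x))$ on each layer, whereas you pass through a.e.-equalities and the agreement-on-Schnorr-randoms property of \cite{Rute:2013pd}, a valid but slightly heavier justification of the same fact.
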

\begin{proof}
Clearly $(\mathrm{id},T)$ is an injective Schnorr layerwise computable
map. By Lemma~\ref{lem:injective-slwc}, $(\mathrm{id},T)$ has an
injective Schnorr layerwise computable inverse map $S\colon(2^{\mathbb{N}},\mu_{(\mathrm{id},T)})\rightarrow(2^{\mathbb{N}},\mu)$.
This inverse is clearly the map $(x,y)\mapsto x$ (except restricted
to a smaller domain to guarantee injectivity). If $(x,y)$ is $\mu_{(\mathrm{id},T)}$-computably
random, then $x=S(x,y)$ is $\mu$-computably random by the randomness
conservation in Lemma~\ref{lem:injective-slwc}. By the composition
$(x,y)\mapsto x\mapsto(x,T(x))$, we have that $y=T(x)$.
\end{proof}
No-randomness-from-nothing follows just as before.
\begin{thm}
\label{thm:nrfn-CR-slwc}Assume $(2^{\mathbb{N}},\mu)$ is a computable
probability space, $T\colon(2^{\mathbb{N}},\mu)\rightarrow2^{\mathbb{N}}$
a Schnorr layerwise computable map, and $y\in2^{\mathbb{N}}$ a $\mu_{T}$-computably
random. Then $y=T(x)$ for some $\mu$-computable random $x\in2^{\mathbb{N}}$.\end{thm}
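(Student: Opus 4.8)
The plan is to mirror the proof of Theorem~\ref{thm:CR-nrfn} almost verbatim, substituting the Schnorr layerwise version Lemma~\ref{lem:graph-slwc} for the almost-everywhere version Lemma~\ref{lem:graph}. Every other link in the chain (the van Lambalgen-style Lemma~\ref{lem:vL} and the conditional-measure apparatus of Lemma~\ref{lem:cond-measure}) is phrased for an arbitrary computable measure on $2^{\mathbb{N}}\times2^{\mathbb{N}}$ and so is insensitive to whether $T$ is almost-everywhere computable or merely Schnorr layerwise computable. The one point that must be checked is that the pushforward used as the ambient product measure is still computable.

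First I would set $\nu:=\mu_{(\mathrm{id},T)}$, where $(\mathrm{id},T)$ is the map $x\mapsto(x,T(x))$. This map is injective and Schnorr layerwise computable, so by Lemma~\ref{lem:injective-slwc} its pushforward $\nu$ is a computable measure on $2^{\mathbb{N}}\times2^{\mathbb{N}}$. The second marginal $\nu_{2}$ is the pushforward of $\mu$ along the second coordinate $x\mapsto T(x)$, that is $\nu_{2}=\mu_{T}$; hence the hypothesis that $y$ is $\mu_{T}$-computably random says precisely that $y$ is $\nu_{2}$-computably random, which is what Lemma~\ref{lem:vL} requires.

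Next I would choose $x$ to be blind $\nu(\cdot\,|\,y)$-Martin-L\"of random relative to $y$; such $x$ form a $\nu(\cdot\,|\,y)$-measure-one set, so at least one exists. Since $\nu$ is a computable measure and $y$ is $\nu_{2}$-computably random, Lemma~\ref{lem:vL} gives that $(x,y)$ is $\nu$-computably random, i.e.\ $\mu_{(\mathrm{id},T)}$-computably random. Applying Lemma~\ref{lem:graph-slwc} to this pair then yields that $x$ is $\mu$-computably random and $y=T(x)$, which is the desired conclusion.

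The main obstacle, to the extent there is one, has already been discharged inside the supporting lemmas: Lemma~\ref{lem:injective-slwc} supplies both the computability of the pushforward of an injective Schnorr layerwise computable map and the existence of a Schnorr layerwise computable inverse conserving computable randomness, and these are exactly what make Lemma~\ref{lem:graph-slwc} go through. Given those results, the present theorem is a routine assembly requiring no new measure-theoretic work; in particular Lemma~\ref{lem:cond-measure} was already stated for lower semicontinuous (rather than merely continuous) test functions, so no extra regularity of $T$ is needed here.
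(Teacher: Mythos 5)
Your proposal is correct and follows essentially the same route as the paper's own proof: set $\nu=\mu_{(\mathrm{id},T)}$, pick $x$ blind $\nu(\cdot\,|\,y)$-Martin-L\"of random relative to $y$, apply Lemma~\ref{lem:vL} to get that $(x,y)$ is $\nu$-computably random, and conclude via Lemma~\ref{lem:graph-slwc}. Your additional explicit check that the pushforward $\nu$ is computable (via Lemma~\ref{lem:injective-slwc}) and that $\nu_{2}=\mu_{T}$ is a sensible bit of care that the paper leaves implicit.
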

\begin{proof}
Let $\mu$ be a computable probability measure on $2^{\mathbb{N}}$,
let $T\colon(2^{\mathbb{N}},\mu)\rightarrow2^{\mathbb{N}}$ be a Schnorr
layerwise computable map, and let $y\in2^{\mathbb{N}}$ be $\mu_{T}$-computably
random. We want to find some $\mu$-computably random $x\in2^{\mathbb{N}}$
such that $y=T(x)$.

Let $\nu=\mu_{(\mathrm{id},T)}$, and let $x$ be blind $\nu(\cdot|y)$-Martin-Löf
random relative to $y$ (there are $\nu(\cdot|y)$-measure-one many,
so there is at least one). By Lemma~\ref{lem:vL}, $(x,y)$ is $\mu_{(\mathrm{id},T)}$-computably
random. By Lemma~\ref{lem:graph-slwc}, $x$ is computably random
and $y=T(x)$.
\end{proof}

\subsubsection{Isomorphism theorem for computable randomness and Schnorr layerwise
computable maps}

We end this subsection, by using Theorem~\ref{thm:nrfn-CR-slwc}
to give a Schnorr layerwise computable version of Lemma~\ref{lem:isomorphism}.
\begin{thm}
\label{thm:slwc-isomorphism}Let $\mu$ and $\nu$ be computable probability
measures on $2^{\mathbb{N}}$. Let $F\colon(2^{\mathbb{N}},\mu)\rightarrow(2^{\mathbb{N}},\nu)$
and $G\colon(2^{\mathbb{N}},\nu)\rightarrow(2^{\mathbb{N}},\mu)$
be Schnorr layerwise computable measure-preserving maps such that
\[
G(F(x))=x\ \ \mu\text{-a.e.}\qquad\text{and}\qquad F(G(y))=y\ \ \nu\text{-a.e.}
\]
Then $F$ and $G$ both conserve computable randomness, and if $x$
is $\mu$-computably random and $y$ is $\nu$-computably random then
\[
G(F(x))=x\qquad\text{and}\qquad F(G(y))=y.
\]
\end{thm}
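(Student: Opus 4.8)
The plan is to reduce everything to the a.e.\ computable isomorphism theorem (Lemma~\ref{lem:isomorphism}) together with the two graph lemmas (Lemmas~\ref{lem:injective-slwc} and~\ref{lem:graph-slwc}), by passing to the graphs of $F$ and $G$ and observing that they are the \emph{same} subset of $2^{\mathbb{N}}\times2^{\mathbb{N}}$ up to swapping coordinates. Since computable randomness implies Schnorr randomness, the canonical representatives of $F$ and $G$ are defined on all the randoms in sight, so $F(x)$, $G(y)$, and the relevant composites make sense.

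First I would form the injective Schnorr layerwise computable graph maps $(\mathrm{id},F)\colon x\mapsto(x,F(x))$ and $(\mathrm{id},G)\colon y\mapsto(y,G(y))$, whose pushforwards $\mu_{(\mathrm{id},F)}$ and $\nu_{(\mathrm{id},G)}$ are computable by Lemma~\ref{lem:injective-slwc}. The key observation is that the coordinate swap $\mathrm{swap}\colon(x,y)\mapsto(y,x)$ pushes $\mu_{(\mathrm{id},F)}$ forward to $\nu_{(\mathrm{id},G)}$. To verify this it suffices to check agreement on cylinders $[\tau]\times[\sigma]$: unwinding the definitions, $(\mathrm{swap})_{*}\mu_{(\mathrm{id},F)}(\tau\times\sigma)=\mu\{x: x\in[\sigma],\ F(x)\in[\tau]\}$, and using $x=G(F(x))$ $\mu$-a.e.\ together with $\mu_{F}=\nu$ this rewrites as $\mu\,F^{-1}\{y: y\in[\tau],\ G(y)\in[\sigma]\}=\nu\{y: y\in[\tau],\ G(y)\in[\sigma]\}=\nu_{(\mathrm{id},G)}(\tau\times\sigma)$. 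This is the one place where the a.e.\ inverse hypothesis is actually used, and I expect this identity to be the main (though short) obstacle. Since $\mathrm{swap}$ is a total computable measure-preserving bijection with computable inverse (under the usual identification of $2^{\mathbb{N}}\times2^{\mathbb{N}}$ with $2^{\mathbb{N}}$), it conserves computable randomness in both directions, e.g.\ by Lemma~\ref{lem:isomorphism}.

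The argument then assembles as follows. Let $x$ be $\mu$-computably random. By Lemma~\ref{lem:injective-slwc} applied to $(\mathrm{id},F)$, the pair $(x,F(x))$ is $\mu_{(\mathrm{id},F)}$-computably random. Applying $\mathrm{swap}$, the pair $(F(x),x)$ is $\nu_{(\mathrm{id},G)}$-computably random. Now Lemma~\ref{lem:graph-slwc} applied to $G$, with $(F(x),x)$ in the role of $(x,y)$, yields simultaneously that $F(x)$ is $\nu$-computably random and that the second coordinate is $G$ of the first, i.e.\ $x=G(F(x))$. Thus $F$ conserves computable randomness and $G(F(x))=x$ for every $\mu$-computably random $x$.

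Finally, the statements for $G$ and the identity $F(G(y))=y$ on $\nu$-computable randoms follow by the symmetric argument, interchanging the roles of $(F,\mu,\nu)$ and $(G,\nu,\mu)$ (so that $(\mathrm{id},G)$, $\mathrm{swap}$, and Lemma~\ref{lem:graph-slwc} applied to $F$ take over). Together these give all four conclusions of the theorem.
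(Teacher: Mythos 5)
Your proof is correct, but it takes a genuinely different route from the paper's. The paper derives Theorem~\ref{thm:slwc-isomorphism} as an application of its no-randomness-from-nothing theorem: given a $\mu$-computably random $x$, it invokes Theorem~\ref{thm:nrfn-CR-slwc} to produce some $\nu$-computably random $y$ with $G(y)=x$, and then shows $F(x)=y$ by a separate argument, namely that $E_{n}=\{y_{0}\in D_{n}\cap G_{n}^{-1}(C_{n}):F_{n}(G_{n}(y_{0}))\neq y_{0}\}$ is a $\Sigma_{2}^{0}$ set of $\nu$-measure zero and hence contains no $\nu$-Kurtz (so no $\nu$-computably) randoms. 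Your argument instead stays entirely inside the graph machinery: the identity $(\mathrm{swap})_{*}\mu_{(\mathrm{id},F)}=\nu_{(\mathrm{id},G)}$ --- which you verify correctly on cylinders, using exactly $G\circ F=\mathrm{id}$ $\mu$-a.e.\ together with $\mu_{F}=\nu$ --- lets you transport the random pair $(x,F(x))$ (random by the conservation half of Lemma~\ref{lem:injective-slwc}) through the coordinate swap, a total computable involution to which Lemma~\ref{lem:isomorphism} applies under the usual identification, and then read off both conclusions at once from Lemma~\ref{lem:graph-slwc} applied to $G$. This bypasses Theorem~\ref{thm:nrfn-CR-slwc} altogether, and with it the conditional-measure and blind-randomness machinery of Lemmas~\ref{lem:cond-measure} and~\ref{lem:vL}, as well as the paper's Kurtz null-set step; it also yields the randomness of $F(x)$ and the identity $G(F(x))=x$ directly, rather than by first exhibiting some random preimage and arguing it must coincide with $F(x)$. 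A minor bonus of your route is that each direction of the conclusion uses only one of the two a.e.-inverse hypotheses. What the paper's route buys instead is economy of presentation: the theorem becomes an immediate corollary of the section's main result. Both arguments ultimately rest on Lemma~\ref{lem:injective-slwc}, and since Lemmas~\ref{lem:injective-slwc} and~\ref{lem:graph-slwc} are proved independently of Theorem~\ref{thm:slwc-isomorphism}, your proof introduces no circularity.
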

\begin{proof}
Let $x$ be $\mu$-computably random. Then by Lemma~\ref{thm:nrfn-CR-slwc},
there is some $\nu$-computably random $y$ such that $G(y)=x$. It
remains to prove that $F(x)=y$. 

Assume not. Then $F(G(y))=F(x)\neq y$. Since $x$ and $y$ is are
computably random and since $F$ and $G$ are Schnorr layerwise computable
(with layerings $C_{n}$ and $D_{n}$ respectively), there is a large
enough $n$ such that $x\in C_{n}$ and $y\in D_{n}$. Then consider
the set 
\[
E_{n}=\{y_{0}\in D_{n}\cap G_{n}^{-1}(C_{n}):F_{n}(G_{n}(y_{0}))\neq y_{0}\}.
\]
This set is the intersection of the $\Pi_{1}^{0}$ set $D_{n}\cap G_{n}^{-1}(C_{n})$
and the $\Sigma_{1}^{0}$ set 
\[
\{y_{0}:y_{0}\notin D_{n}\cap G_{n}^{-1}(C_{n})\ \text{or}\ F_{n}(G_{n}(y_{0}))\neq y_{0}\}.
\]
So $E_{n}$ is $\Sigma_{2}^{0}$. Moreover, since $F$ and $G$ are
almost-surely inverses, $E_{n}$ has $\nu$-measure zero. Hence it
contains no $\nu$-Kurtz randoms, and therefore no $\nu$-computable
randoms. Since $y$ is $\nu$-computably random and in $E_{n}$, we
have a contradiction.

The other direction follows by symmetry.
\end{proof}

\subsection{\label{sub:comp-metric-spaces}Extending to computable metric spaces}

Theorem~\ref{thm:CR-nrfn} also applies to any computable metric
space as we will show in this section. Randomness on computable metric
spaces is well-understood; see \cite{Bienvenu2011,Hoyrup2009,Gacs2011,Rute:2013pd,rute1,Gacs:aa}.
In particular, Rute \cite{rute1} developed a theory of computable
randomness on a computable probability measure $(\mathbb{X},\mu)$
where $\mathbb{X}$ is a computable metric space and $\mu$ is a computable
measure on that space. Indeed the definition of computable randomness
(Definition~\ref{def:randomness}) can be extended to any computable
probability space by replacing (\ref{eq:test-pair1}) with the following
(Rute \cite[Thm~5.8]{rute1}), 
\[
\int_{B}t\,d\mu\leq\nu(B)\qquad(B\subseteq\mathbb{X}\ \text{Borel}).
\]

Moreover, each computable probability space $(\mathbb{X},\mu)$ is
isomorphic to a computable measure $\mu'$ on $2^{\mathbb{N}}$, via
a pair of almost-everywhere computable measure-preserving maps $I_{\mu}$
and $I_{\mu}^{-1}$ which commute (up to a.e.~equivalence) \cite[Prop.~7.9]{rute1}.
\[
\includegraphics{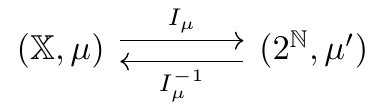}
\] Then any Schnorr layerwise computable measure-preserving map $T\colon(\mathbb{X},\mu)\rightarrow(\mathbb{Y},\nu)$
can be transferred to a Schnorr layerwise computable measure-preserving
map $T'\colon(2^{\mathbb{N}},\mu')\rightarrow(2^{\mathbb{N}},\nu')$
such that the following diagram commutes (up to a.e.~equivalence).
\[
\includegraphics{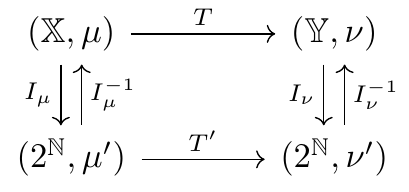}
\] That is, we set $T':=I_{\nu}\circ T\circ I_{\mu}^{-1}$. Then $T=I_{\nu}^{-1}\circ T'\circ I_{\mu}$
$\mu$-a.e. (Rute showed that a.e.~computable maps are Schnorr layerwise
computable maps \cite[p.~41, Prop.~3.24]{Rute:2013pd} and that Schnorr
layerwise computable maps are closed under composition \cite[p.~42, Prop.~3.27]{Rute:2013pd}.)

Now we can show the analogue of Theorem~\ref{thm:CR-nrfn}. Let $y\in\mathbb{Y}$
be $\nu$-random. By Theorem~\ref{thm:slwc-isomorphism} (also Rute
\cite[Prop~7.6, Thm.~7.11]{rute1}), we have that $y':=I_{\nu}(y)$
is $\nu'$-computably random and $I_{\nu}^{-1}(y')=I_{\nu}^{-1}(I_{\nu}(y))=y$.
Applying Theorem~\ref{thm:nrfn-CR-slwc}, there is some $x'$ which
is $\mu'$-computably random such that $T'(x')=y'$. Last, again using
Theorem~\ref{thm:slwc-isomorphism}, we have that $x:=I_{\mu}^{-1}(x')$
is $\mu$-computably random and $I_{\mu}(x)=x'$. Since $T$ and $I_{\nu}^{-1}\circ T'\circ I_{\mu}$
are equal $\mu$-almost-everywhere, then they are equal on all $\mu$-computable
randoms.\footnote{They are even equal on all Schnorr randoms \cite[p.~41, Prop.~3.18]{Rute:2013pd}.}
Therefore
\[
T(x)=I_{\nu}^{-1}(T'(I_{\mu}(x)))=y.
\]

\subsection{Generalizations to uniform relativization\label{sub:uniform-relativization}}

It is well known that one can relativize most proofs in computability
theory. In particular, one can straightforwardly relativize the proof
of Theorem~\ref{thm:CR-nrfn} to get the result that if $\mu$ is
computable from $a\in\mathbb{N}^{\mathbb{N}}$, $T\colon(2^{\mathbb{N}},\mu)\rightarrow2^{\mathbb{N}}$
is computable from $a$, and $y$ is $\mu_{T}$-computably random
relative to $a$, then there exists some $x\in\mathbb{N}^{\mathbb{N}}$
which is $\mu$-computably random relative to $a$ such that $T(x)=y$. 

The problem with this result is that this is not the best form of
relativization for computable randomness. It has been shown recently
that Schnorr and computable randomness behave better under uniform
relativization (which is related to truth-table reducibility) \cite{Franklin.Stephan:2010,Miyabe2011,Miyabe.Rute:2013}.

The full generalization of Theorem~\ref{thm:CR-nrfn} to uniform
relativization would be as follows. In the remainder of this section
assume that $\mathbb{X}$ and $\mathbb{Y}$ are computable metric
spaces and that $\{\mu_{a}\}_{a\in\mathbb{N}^{\mathbb{N}}}$ is a
family of probability measures on $\mathbb{X}$ such that $\mu_{a}$
is uniformly computable in $a\in\mathbb{N}^{\mathbb{N}}$. Say that
$x\in\mathbb{X}$ is \emph{$\mu_{a_{0}}$-computably random uniformly relativized to $a_{0}\in\mathbb{N}^{\mathbb{N}}$}
if $t_{a_{0}}(x)<\infty$ for all families $\{\nu_{a},t_{a}\}_{a\in\mathbb{N}^{\mathbb{N}}}$,
where $\nu_{a}$ is a probability measure on $\mathbb{X}$ uniformly
computable in $a\in\mathbb{N}^{\mathbb{N}}$, and $t_{a}$ is uniformly
lower semicomputable in $a$, such that 
\[
\int_{A}t_{a}\,d\mu_{a}\leq\nu_{a}(A)\quad(A\subseteq\mathbb{X}\ \text{Borel}).
\]

\begin{conjecture}
Assume $\{T_{a}\}_{a\in\mathbb{N}^{\mathbb{N}}}$ is a family of layerwise
maps $T_{a}\colon(\mathbb{X},\mu_{a})\rightarrow\mathbb{Y}$ where
$T_{a}$ is Schnorr layerwise computable uniformly in $a$. Fix $a_{0}\in\mathbb{N}^{\mathbb{N}}$.
If $y\in\mathbb{Y}$ is $(\mu_{a_{0}})_{T_{a_{0}}}$-computably random
uniformly relativized to $a_{0}$, then $y=T_{a_{0}}(x)$ for some
$x\in\mathbb{X}$ which is $\mu_{a_{0}}$-computably random uniformly
relativized to $a_{0}$.
\end{conjecture}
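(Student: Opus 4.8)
The plan is to carry out a uniform relativization of the entire development in Sections~\ref{sec:NRFN-CR} and~\ref{sec:3-generalizations}, treating every object that was ``computable'' as a family uniformly computable in the oracle $a\in\mathbb{N}^{\mathbb{N}}$ and every test as a uniform family $\{t_a,\nu_a\}_a$. First I would dispose of the general domain and codomain: using the isomorphisms $I_{\mu_a}$ and $I_{\nu_a}$ of Section~\ref{sub:comp-metric-spaces} (which themselves relativize uniformly, since the construction in \cite[Prop.~7.9]{rute1} is uniform in the presentation of the computable probability space), transfer the family $T_a\colon(\mathbb{X},\mu_a)\rightarrow\mathbb{Y}$ to a uniform family $T_a'\colon(2^{\mathbb{N}},\mu_a')\rightarrow(2^{\mathbb{N}},\nu_a')$ of Schnorr layerwise computable maps, reducing the conjecture to the case $\mathbb{X}=\mathbb{Y}=2^{\mathbb{N}}$. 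Next, as in Lemma~\ref{lem:graph-slwc}, replace $T_{a}$ by the graph map $(\mathrm{id},T_{a})$ together with its uniform Schnorr layerwise inverse, so that it suffices to find, for a given $y$ that is $(\mu_{a_0})_{T_{a_0}}$-computably random uniformly relativized to $a_0$, a single $x$ making the pair $(x,y)$ computably random uniformly relativized to $a_0$ for the pushforward $\nu_{a_0}=(\mu_{a_0})_{(\mathrm{id},T_{a_0})}$.

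The supporting lemmas relativize uniformly in a routine way once the tests are required to be uniform in $a$. In particular I would state and prove uniform versions of the following. The martingale convergence lemma (Lemma~\ref{lem:doob}) becomes: for a family $\rho_a$ computable uniformly in $a$ and any $y$ that is computably random uniformly relativized to $a_0$ for the relevant marginal, the ratio $\rho_{a_0}(y\upharpoonright_{n})/(\mu_{a_0})(y\upharpoonright_{n})$ converges; the proof of the effective Doob theorem relativizes verbatim, the witnessing test it produces being uniform in $a$. The conditional-measure lemma (Lemma~\ref{lem:cond-measure}), now applied to a family of product measures computable uniformly in $a$, defines $\mu_{a}(\sigma\mid y)$ as the limit of $\mu_{a}(\sigma\times y\upharpoonright_{n})/(\mu_{a})_2(y\upharpoonright_{n})$, yielding a probability measure computable uniformly in $(a,y)$ whenever $y$ is computably random uniformly relativized to $a_0$ for the second marginal. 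Parts (5) and (6) carry over unchanged, since the monotone convergence theorem and Fatou's lemma are insensitive to the oracle.

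The heart of the argument, and the step I expect to be the main obstacle, is the uniformly relativized van Lambalgen lemma (Lemma~\ref{lem:vL}). Here one chooses $x$ to be blind $\nu_{a_0}(\cdot\mid y)$-Martin-L\"of random uniformly relativized to $(a_0,y)$ --- a notion that must be defined via uniform families $\{s_a\}$ of tests lower semicomputable in $(a,y)$ with $\int s_a\,d\nu_{a}(\cdot\mid y)<\infty$ --- and argues that $(x,y)$ is then computably random uniformly relativized to $a_0$. Given a uniform family $\{t_a,\rho_a\}$ witnessing the contrary, the section $t_{a_0}^{\,y}=t_{a_0}(\cdot,y)$ must be exhibited as a member of a \emph{uniform} family of blind Martin-L\"of tests relative to $(a,y)$ whose $\nu_{a}(\cdot\mid y)$-integrals are finite, the finiteness coming from the uniform Doob lemma applied to $\lim_n\rho_{a_0}(\varepsilon\times y\upharpoonright_{n})/(\mu_{a_0})_2(y\upharpoonright_{n})$. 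This is precisely where uniform, rather than ordinary, relativization is indispensable: van Lambalgen's theorem for computable randomness fails under ordinary relativization (cf.\ \cite{Miyabe.Rute:2013}), so the delicate point is to verify that the passage $t_a\mapsto t_a^{\,y}$ respects uniformity in $a$ and that the integral bound is uniformly finite. The remaining ingredient is a uniform version of Lemma~\ref{lem:injective-slwc}, whose proof invokes the effective compactness of $2^{\mathbb{N}}$ (unaffected by the oracle) and, through Lemma~\ref{lem:CR-conditioned-on-subset}, the Lebesgue density theorem via a $\Sigma_2^0$ null set avoiding all Kurtz randoms; relativizing this null-set-avoidance argument uniformly in $a$, so that a single uniform family of tests captures the failure across all oracles, is the last technical hurdle, after which Theorem~\ref{thm:nrfn-CR-slwc} and hence the conjecture follow exactly as in the unrelativized proofs.
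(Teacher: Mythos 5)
You should first be clear about the status of this statement in the paper: it is stated as a \emph{Conjecture}, and the paper offers no proof. The sentence immediately following it in the paper says precisely what your proposal says --- that one would need to check that each step of the proof of Theorem~\ref{thm:nrfn-CR-slwc} (and its extension to computable metric spaces and Schnorr layerwise computable maps) can be carried out uniformly in $a$ --- and then explicitly declares that verifying all the details is beyond the scope of the paper. Your proposal reproduces this roadmap faithfully (transfer to $2^{\mathbb{N}}$ via the uniform isomorphisms of Section~\ref{sub:comp-metric-spaces}, the graph map of Lemma~\ref{lem:graph-slwc}, uniform versions of Lemmas~\ref{lem:doob}, \ref{lem:cond-measure}, \ref{lem:vL}, \ref{lem:CR-conditioned-on-subset}, and \ref{lem:injective-slwc}), but it has the same epistemic status as the paper's remark: it is a plan, not a proof. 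Every genuinely delicate point is named and then deferred rather than resolved --- you yourself flag the uniformly relativized van Lambalgen step as ``the main obstacle'' and the uniform relativization of the Lebesgue-density/Kurtz-avoidance argument inside Lemma~\ref{lem:injective-slwc} as ``the last technical hurdle,'' without carrying either out. The gap in your proposal is thus exactly where the open content of the conjecture lives, and the conjecture remains unproven by it.

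There is also one concrete misstep worth fixing. You assert that the conditional measure $\mu_{a}(\cdot\mid y)$ is \emph{computable} uniformly in $(a,y)$ whenever $y$ is computably random for the second marginal. That does not follow: Lemma~\ref{lem:doob} gives convergence of the ratios $\mu_{a}(\sigma\times y{\upharpoonright_{n}})/(\mu_{a})_{2}(y{\upharpoonright_{n}})$ on computably random $y$, but with no effective rate, so the limit measure need not be computable from $(a,y)$. This is precisely why Lemma~\ref{lem:vL} works with \emph{blind} (Hippocratic) Martin-L\"of randomness, which imposes no computability requirement on the measure; your argument must avoid the computability claim, and in fact the rest of your van Lambalgen step does not use it. Relatedly, the blind side needs no uniform relativization at all: to produce $x$ it suffices to take $x$ blind $\nu_{a_{0}}(\cdot\mid y)$-Martin-L\"of random relative to the \emph{plain} oracle $a_{0}\oplus y$, since the section $t_{a_{0}}^{\,y}$ of a uniform test pair is lower semicomputable in $a_{0}\oplus y$ and its conditional integral is finite by the relativized Doob bound. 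Uniform relativization is genuinely needed only where computable measures enter the tests, namely in the pairs $(t_{a},\nu_{a})$ defining uniform computable randomness; isolating that as the sole locus of uniformity would simplify your plan, but the verification that each lemma relativizes uniformly still remains to be done before the conjecture can be considered proved.
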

To prove this conjecture one would need to check that each step in
the proof of Theorem~\ref{thm:nrfn-CR-slwc} (and its generalization
to computable metrics spaces and Schnorr layerwise computable maps)
can be done uniformly in $a$. We see no issue in doing this (for
example, we are not aware of any nonuniform case analyses in the proof).
However, verifying all the details is beyond the scope of this paper.

\section{\label{sec:Application-of-nrfn}An application of no-randomness-from-nothing
for computable randomness}

There exist many applications of no-randomness-from-nothing in the
literature for Martin-Löf randomness. Most of these are direct applications.
By Theorems~\ref{thm:CR-nrfn} and the generalizations in Section~\ref{sec:3-generalizations},
these results also apply to computable randomness. In this section,
we would like to give another less obvious application. (Recall that
computable randomness can be naturally extended to computable measures
which are not necessarily probability measures.)
\begin{thm}
\label{thm:CR-sum-of-measures}Let $\mu$ be a probability measure
on $2^{\mathbb{N}}$ which is a countable sum of measures $\mu=\sum_{n}\mu_{n}$
where $\mu_{n}$ is computable in $n$. If $x$ is $\mu$-computably
random, then $x$ is $\mu_{n}$-computably random for some $n$.\end{thm}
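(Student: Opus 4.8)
The plan is to exhibit $\mu$ as the pushforward of a single computable probability measure under an almost-everywhere computable map and then invoke no-randomness-from-nothing (Theorem~\ref{thm:CR-nrfn}). The idea is to pass to a ``tagged'' copy of $2^{\mathbb{N}}$ that records which summand $\mu_n$ a point came from. Concretely, I would encode a pair $(n,x)$ as the sequence $0^{n}1x\in2^{\mathbb{N}}$ and define a measure $\rho$ on $2^{\mathbb{N}}$, supported on these codes, by $\rho([0^{n}1\sigma])=\mu_n(\sigma)$. Since $\mu=\sum_n\mu_n$ is a probability measure, $\rho$ has total mass $\sum_n\mu_n(\varepsilon)=1$; and since each $\mu_n$ is computable uniformly in $n$ and the tail sums $\sum_{n\geq m}\mu_n(\varepsilon)=1-\sum_{n<m}\mu_n(\varepsilon)$ are computable, $\rho$ is a computable probability measure. (Equivalently one could work on the computable metric space $\mathbb{N}\times2^{\mathbb{N}}$ and use the generalization of Section~\ref{sub:comp-metric-spaces}.)

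Next I would introduce the decoding map $F\colon2^{\mathbb{N}}\rightarrow2^{\mathbb{N}}$ that strips the prefix, $F(0^{n}1x)=x$. This $F$ is partial computable via a monotone machine that reads bits up to the first $1$ and then copies the remainder, and its domain is $\{z:z\neq0^{\infty}\}$, a set of $\rho$-measure one; so $F$ is $\rho$-almost-everywhere computable. A short computation gives $\rho_F([\sigma])=\sum_n\mu_n(\sigma)=\mu(\sigma)$, so that $F\colon(2^{\mathbb{N}},\rho)\rightarrow(2^{\mathbb{N}},\mu)$ is measure-preserving. Applying Theorem~\ref{thm:CR-nrfn} with source measure $\rho$ and map $F$ (so $\rho_F=\mu$), the hypothesis that $x$ is $\mu$-computably random produces a $\rho$-computably random $z$ with $F(z)=x$. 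Since $F(z)$ is defined, $z\in\operatorname{dom}F$, so $z$ contains a $1$ and therefore $z=0^{n}1x$ for a single well-determined $n$; moreover $\rho(z\upharpoonright_{n+1})=\mu_n(\varepsilon)>0$ because $\rho$-computable randoms have positive measure on every prefix.

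It remains to show that $\rho$-computable randomness of $z=0^{n}1x$ forces $x$ to be $\mu_n$-computably random, and this is the step demanding the most care, since $\mu_n$ need not be a probability measure. I would argue via the martingale characterization (Lemma~\ref{lem:CR-standard-def}), passing to the probability measure $\widehat{\mu}_n=\mu_n/\mu_n(\varepsilon)$ and using that computable randomness is insensitive to scaling a measure by a positive computable constant (such scaling multiplies every martingale ratio by that constant). Given an arbitrary computable measure $\eta$ on $2^{\mathbb{N}}$, let $\theta$ be its pushforward under $x\mapsto0^{n}1x$, a computable measure supported on $[0^{n}1]$ with $\theta([0^{n}1\sigma])=\eta(\sigma)$. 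For every $k\geq n+1$, writing $m=k-(n+1)$, we have $z\upharpoonright_{k}=0^{n}1(x\upharpoonright_{m})$ and hence $\theta(z\upharpoonright_{k})/\rho(z\upharpoonright_{k})=\eta(x\upharpoonright_{m})/\mu_n(x\upharpoonright_{m})$; as a $\liminf$ depends only on a tail, the two liminfs agree, and the same correspondence yields $\mu_n(x\upharpoonright_{m})>0$ for all $m$. Because $z$ is $\rho$-computably random, Lemma~\ref{lem:CR-standard-def} gives $\liminf_k\theta(z\upharpoonright_{k})/\rho(z\upharpoonright_{k})<\infty$, whence $\liminf_m\eta(x\upharpoonright_{m})/\mu_n(x\upharpoonright_{m})<\infty$; since $\eta$ was arbitrary, Lemma~\ref{lem:CR-standard-def} applied to $\widehat{\mu}_n$ shows $x$ is $\widehat{\mu}_n$-, hence $\mu_n$-, computably random. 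The main obstacle is exactly this last transfer from the tagged measure $\rho$ back to the individual summand $\mu_n$: one must keep the non-probability normalization of $\mu_n$ under control and verify that the prefix index $k$ in the tagged martingale lines up with the index $m$ of the $\mu_n$-martingale, which is what the prefix-code bookkeeping guarantees.
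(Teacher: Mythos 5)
Your proof is correct and takes essentially the same route as the paper: the same tagged measure $\rho(0^{n}1\sigma)=\mu_{n}(\sigma)$ (the paper's $\nu$), the same prefix-stripping map, and the same appeal to Theorem~\ref{thm:CR-nrfn}. The only difference is that you spell out, via Lemma~\ref{lem:CR-standard-def} and a normalization of $\mu_{n}$, the transfer from $\rho$-randomness of $0^{n}1x$ to $\mu_{n}$-randomness of $x$ --- a step the paper declares ``easy to see,'' sidestepping your normalization by working directly with computable randomness for non-probability measures.
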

\begin{proof}
Let $\nu$ be the measure which is basically a disjoint union of the
$\mu_{n}$. Specifically, let $0^{n}$ denote a sequence of $n$ many
$0$s. Then let $\nu$ be the measure given by $\nu(0^{n}1\sigma)=\mu_{n}(\sigma)$
and $\nu(\{0^{\infty}\})=0$. This measure is computable and it is
easy to see that $0^{n}1x$ is $\nu$-computably random if and only
if $x$ is $\mu_{n}$-computably random. Then let $T\colonsubseteq2^{\mathbb{N}}\rightarrow2^{\mathbb{N}}$
be the partial computable map given by $T(0^{n}1x)=x$. This map is
measure-preserving of type $T\colon(2^{\mathbb{N}},\nu)\rightarrow(2^{\mathbb{N}},\mu)$.
By no-randomness-from-nothing, each $\mu$-computable random $x$
comes from some $\nu$-computable random $0^{n}1x$. Therefore, $x$
is $\mu_{n}$-computably random for some $n$.
\end{proof}
The proof of Theorem~\ref{thm:CR-sum-of-measures} holds for any
randomness notion satisfying no-randomness-from-nothing and satisfying
the property that if $\nu(0^{n}1\sigma)=\mu_{n}(\sigma)$, $\nu(\{0^{\infty}\})=0$,
and $0^{n}1x$ is random for $\nu$ then $x$ is random for $\mu_{n}$.
In particular, Theorem~\ref{thm:CR-sum-of-measures} holds for Martin-Löf
randomness. 
\begin{question}
Does Theorem~\ref{thm:CR-sum-of-measures} hold for Schnorr randomness?\end{question}
\begin{rem}
The converse to Theorem~\ref{thm:CR-sum-of-measures} is as follows.
If $\mu\leq\nu$ are computable measures and $x$ is $\mu$-random,
then $x$ is $\nu$ random. For computable randomness, this is Lemma~\ref{lem:CR-smaller-to-bigger}.
It is also trivial to prove for many other randomness notions, including
Martin-Löf and Schnorr randomness. However, notice that, just as Theorem~\ref{thm:CR-sum-of-measures}
follows from no-randomness-from-nothing, its converse follows from
randomness conservation. The measure $\nu$ is the sum of the measures
$\mu$ and $\nu-\mu$. Take a disjoint sum of $\mu$ and $\nu-\mu$
(as in the proof of Theorem~\ref{thm:CR-sum-of-measures}), map this
disjoint sum to $\nu$, and apply randomness conservation.
\end{rem}

\section{\label{sec:NRFN-SR}No-randomness-from-nothing for Schnorr randomness}

In this section, we will prove the following theorem.
\begin{thm}
\label{thm:not-CR-nrnf}Let $\mu$ be a computable probability measure,
and assume $x_{0}\in2^{\mathbb{N}}$ is not computably random with
respect to $\mu$. Then there exists an almost-everywhere computable
measure-preserving map $T\colon(2^{\mathbb{N}},\lambda)\rightarrow(2^{\mathbb{N}},\mu)$
such that $T^{-1}(\{x_{0}\})=\varnothing$.
\end{thm}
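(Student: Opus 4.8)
The plan is to construct, from the failure of computable randomness at $x_0$, a computable measure $\nu$ that "over-bets" on $x_0$ in a way that can be coded into a measure-preserving map from the fair-coin space. Since $x_0$ is not $\mu$-computably random, Lemma~\ref{lem:CR-standard-def} gives either that $\mu(x_0{\upharpoonright_n})=0$ for some $n$ (the easy case, where $[x_0{\upharpoonright_n}]$ is a $\mu$-null cylinder and we can arrange $T$ to avoid it outright) or a computable measure $\nu$ with $\limsup_n \nu(x_0{\upharpoonright_n})/\mu(x_0{\upharpoonright_n}) = \infty$. I would focus on the second case, since it carries the real content.

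The key idea is that the unbounded likelihood ratio lets us "pack" more than a full unit of $\lambda$-mass onto preimages of the cylinders shrinking to $x_0$, so that no $\lambda$-mass is left over to map to $x_0$ itself. Concretely, I would first pass to a measure $\nu$ that is a computable probability measure (normalizing, or combining with $\mu$ so that $\nu \geq \mu$ on the relevant cylinders) and extract a computable increasing sequence of lengths $n_k$ along which $\nu(x_0{\upharpoonright_{n_k}})/\mu(x_0{\upharpoonright_{n_k}}) \to \infty$. The map $T\colon(2^{\mathbb{N}},\lambda)\to(2^{\mathbb{N}},\mu)$ would be built in stages: I would use an initial block of input bits to decide, according to the computable martingale values, which cylinder $[x_0{\upharpoonright_{n_k}}]$ to aim into, arranging that the total $\lambda$-mass sent into $\bigcap_k [x_0{\upharpoonright_{n_k}}] = \{x_0\}$ is zero while the mass sent into each shrinking annulus $[x_0{\upharpoonright_{n_k}}]\setminus[x_0{\upharpoonright_{n_{k+1}}}]$ matches its $\mu$-measure. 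Because the ratio blows up, the "budget" of fair-coin mass is exhausted before reaching $x_0$, forcing $T^{-1}(\{x_0\})=\varnothing$ while measure-preservation is maintained on each annulus.

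To make $T$ genuinely measure-preserving onto all of $(2^{\mathbb{N}},\mu)$ and not just near $x_0$, I would define $T$ separately on the complement $2^{\mathbb{N}}\setminus\{x_0\}$, which decomposes into the countably many annuli above together with the cylinders off the path to $x_0$; on each such piece $T$ maps a fair-coin cylinder measure-preservingly onto that piece using the standard fact that any computable measure on a cylinder is the pushforward of an appropriately scaled $\lambda$ (via a computable isomorphism as in Lemma~\ref{lem:isomorphism}, or directly by the base-$2$ expansion of the $\mu$-measure). Verifying that the induced pushforward is exactly $\mu$ reduces to checking that the mass assigned to each annulus telescopes correctly to $\mu$ of every cylinder, and that the $\lambda$-measure of the domain is $1$; the latter is where the divergence $\nu(x_0{\upharpoonright_{n_k}})/\mu(x_0{\upharpoonright_{n_k}})\to\infty$ is used to confirm no residual mass lands on $x_0$.

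The main obstacle I expect is the bookkeeping that simultaneously guarantees (i) $T$ is almost-everywhere computable with a $\mu$-measure-one (equivalently $\lambda$-measure-one) domain, (ii) the pushforward is exactly $\mu$ and not merely a measure close to $\mu$, and (iii) the preimage of the single point $x_0$ is literally empty rather than just null. Point (iii) is the crux: it is easy to make $T^{-1}(\{x_0\})$ have measure zero, but to make it empty one must ensure that every input sequence is decisively routed into some annulus at a finite stage, never "converging to $x_0$" along the input. The unbounded martingale ratio is exactly the resource that lets the routing decision for each input terminate in finitely many steps, so I would organize the construction so that the fair-coin mass allocated to "still heading toward $x_0$" strictly decreases to zero along the computable stages $n_k$, leaving no sequence undecided in the limit.
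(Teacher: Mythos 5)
Your construction has a genuine gap: it is not computable. The map $T$ must be partial computable with no oracle, but every organizing object in your scheme is defined from the point $x_{0}$ itself --- the annuli $[x_{0}{\upharpoonright_{n_{k}}}]\smallsetminus[x_{0}{\upharpoonright_{n_{k+1}}}]$, the ``cylinders off the path to $x_{0}$,'' and the routing ``aimed into $[x_{0}{\upharpoonright_{n_{k}}}]$.'' Since $x_{0}$ is in general non-computable, a machine cannot identify these pieces, and knowing the computable values $\nu(\sigma)/\mu(\sigma)$ for all $\sigma$ does not let it locate the path of divergence (divergence along a path is a $\Pi_{2}^{0}$-type fact, and many paths may show large ratios at any finite stage). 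When $x_{0}$ \emph{is} computable the theorem is easy anyway, since a computable non-atom is not even Kurtz random. A symptom that something is off is that your mass bookkeeping never actually uses the hypothesis: under the theorem's hypothesis $x_{0}$ is a non-atom (an atom is automatically computably random, as every martingale ratio is bounded by $1/\mu(\{x_{0}\})$), so the annulus masses telescope to $1-\mu(\{x_{0}\})=1$ for \emph{every} non-atom, random or not; the ``budget exhaustion'' you attribute to the blow-up of $\nu/\mu$ holds vacuously. If the annulus decomposition could be carried out computably it would give $T^{-1}(\{x_{0}\})=\varnothing$ for an arbitrary non-atom, contradicting Theorem~\ref{thm:CR-nrfn}; so the entire content of Theorem~\ref{thm:not-CR-nrnf} lies in making the construction uniform in the tree, independent of $x_{0}$ --- exactly the step your proposal does not supply. (A small related slip: Lemma~\ref{lem:CR-standard-def} gives $\liminf_{n}\nu(x_{0}{\upharpoonright_{n}})/\mu(x_{0}{\upharpoonright_{n}})=\infty$, i.e.\ true divergence, not merely $\limsup=\infty$; this distinction is needed below.)

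For comparison, the paper's proof never mentions $x_{0}$ in the construction. It replaces $\nu'$ by $\nu=\frac{1}{2}\mu+\frac{1}{4}\lambda+\frac{1}{4}\nu'$, takes the density $f=d\mu/d\nu$ with $f(x)=\lim_{n}\mu(x{\upharpoonright_{n}})/\nu(x{\upharpoonright_{n}})\in[0,2]$ and $f(x_{0})=0$, and transports $\lambda$ onto the region under the graph of $f$ inside $2^{\mathbb{N}}\times[0,2]$ by a stagewise scheme defined uniformly over all strings: $T_{n+1}$ carries $\rho_{n}$ (the restriction of $\nu\otimes\mathcal{L}$ under the step density $f_{n}=\mu(\cdot{\upharpoonright_{n}})/\nu(\cdot{\upharpoonright_{n}})$) to $\rho_{n+1}$, never changes the first $n$ output bits, and moves mass only upward, maintaining the invariant $0<y\leq y'\leq f_{n+1}(x')$; each stage is built from Lemma~\ref{lem:atomless-measures-maps} on computable rectangles. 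Then $T=\lim_{n}\pi\circ T_{n}\circ\cdots\circ T_{0}$ is a.e.\ computable and pushes $\lambda$ forward to $\mu$, and if $T(\omega_{0})=x_{0}$ then the heights $y^{(n)}$ along the trajectory are positive and nondecreasing yet bounded by $f_{n}$ evaluated along $x_{0}$'s path, which tends to $0$ --- a contradiction. So the divergence of the martingale is used not to exhaust a budget of mass near $x_{0}$, but to make the \emph{uniform} construction squeeze the preimage of $x_{0}$ empty; this also settles your point (iii) automatically, without requiring explicit finite-stage routing away from $x_{0}$. Any repair of your proposal that carries out the routing simultaneously over all strings, weighted by $f_{n}$, essentially reconstructs this argument.
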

As an obvious corollary we have the following.
\begin{cor}
\label{cor:SR-nrfn}Schnorr randomness does not satisfy no-randomness-from-nothing.
\end{cor}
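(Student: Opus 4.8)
The plan is to prove Theorem~\ref{thm:not-CR-nrnf}, from which Corollary~\ref{cor:SR-nrfn} follows immediately: the measure-preserving map $T\colon(2^{\mathbb{N}},\lambda)\rightarrow(2^{\mathbb{N}},\mu)$ sends $\lambda$-Schnorr randoms to $\mu$-Schnorr randoms under randomness conservation, yet $x_0$ (which we may take to be $\mu$-Schnorr random but not $\mu$-computably random, since such $x_0$ exist) has empty preimage, so $x_0$ is a $\mu_T$-Schnorr random with no $\lambda$-Schnorr-random preimage, witnessing the failure of no-randomness-from-nothing.

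The core of the argument is the construction of $T$. The starting point is that $x_0$ fails computable randomness, so by Lemma~\ref{lem:CR-standard-def} there is a computable measure $\nu$ and a betting strategy whose martingale $\nu(x_0\!\upharpoonright_n)/\mu(x_0\!\upharpoonright_n)$ has $\liminf_n = \infty$ along $x_0$ (or else $\mu(x_0\!\upharpoonright_n)=0$ for some $n$, which is an even easier case to handle by routing all mass away from the cylinder $[x_0\!\upharpoonright_n]$). First I would use this martingale to build, level by level, an a.e.-computable map $T$ by specifying how the $\lambda$-measure on $2^{\mathbb{N}}$ is distributed over the cylinders $[\sigma]$ so that the pushforward equals $\mu$. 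The idea is to describe $T$ by a monotone machine that reads bits of its input and commits to longer and longer prefixes of the output, where at each node $\sigma$ the fraction of incoming mass sent to $\sigma0$ versus $\sigma1$ is dictated by $\mu(\sigma0)/\mu(\sigma)$ and $\mu(\sigma1)/\mu(\sigma)$, guaranteeing measure-preservation ($\lambda_T=\mu$). The crucial modification is that whenever the martingale against $x_0$ grows, I reserve a positive fraction of the mass currently destined for the prefix $x_0\!\upharpoonright_n$ and instead divert it permanently away from $x_0$; because the $\liminf$ of the martingale is infinite, these diversions can be made to drain off \emph{all} of the $\lambda$-mass that would otherwise converge to $x_0$, so that $T^{-1}(\{x_0\})=\varnothing$ while still leaving enough mass to realize $\mu$ on every cylinder.

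The main obstacle is reconciling two competing demands: measure-preservation forces the total $\lambda$-mass mapping into each cylinder $[x_0\!\upharpoonright_n]$ to equal $\mu(x_0\!\upharpoonright_n)$, yet I need the mass mapping \emph{exactly to $x_0$} (the intersection over all $n$) to vanish. This is possible precisely because I have freedom in how mass entering $[x_0\!\upharpoonright_n]$ is subdivided between $[x_0\!\upharpoonright_{n+1}]$ and its sibling: the infinite martingale gain against $x_0$ means $\mu(x_0\!\upharpoonright_n)$ shrinks slowly relative to $\nu(x_0\!\upharpoonright_n)$, which I would exploit to show that I can repeatedly peel off mass toward the sibling cylinders fast enough that $\lim_n$ of the mass still headed for $x_0$ is $0$, and hence $\lambda(T^{-1}(\{x_0\}))=0$ strengthens to $T^{-1}(\{x_0\})=\varnothing$ by arranging that no individual $\lambda$-input ever has $x_0$ as its limit. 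I would carry this out by an explicit inductive bookkeeping of mass quotas at each node, verifying at the end that (i) the construction defines a genuine monotone machine computable from $\mu$ and $\nu$, (ii) $\mu(\operatorname{dom}T)$—equivalently $\lambda(\operatorname{dom}T)$—equals $1$, (iii) $\lambda_T=\mu$, and (iv) no point maps to $x_0$. Step (iv), together with ensuring the diverted mass genuinely escapes every tail, is where the quantitative use of the divergent martingale is indispensable and is the step I expect to require the most care.
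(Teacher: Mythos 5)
Your reduction of the corollary to Theorem~\ref{thm:not-CR-nrnf} is exactly the paper's: pick $x_{0}$ that is Schnorr random but not computably random (such points exist, e.g.\ for $\mu=\lambda$), and note that an empty preimage in particular contains no Schnorr random, so you do not even need to invoke randomness conservation. Your opening move for the theorem also matches the paper: Lemma~\ref{lem:CR-standard-def} gives a computable $\nu'$ with $\nu'(x_{0}{\upharpoonright_{n}})/\mu(x_{0}{\upharpoonright_{n}})\rightarrow\infty$ (the paper additionally replaces $\nu'$ by $\frac{1}{2}\mu+\frac{1}{4}\lambda+\frac{1}{4}\nu'$ so that all cylinders get positive $\nu$-mass, which also absorbs your degenerate case $\mu(x_{0}{\upharpoonright_{n}})=0$). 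But the core of your construction has a genuine gap at precisely the step you flag as needing the most care, and the gap is not a routine detail: it is the main idea of the proof.

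Concretely, measure preservation pins down $\lambda(T^{-1}[x_{0}{\upharpoonright_{n}}])=\mu(x_{0}{\upharpoonright_{n}})$ for every $n$, so \emph{any} quota-bookkeeping scheme automatically gives $\lambda\bigl(\bigcap_{n}T^{-1}[x_{0}{\upharpoonright_{n}}]\bigr)=\mu(\{x_{0}\})=0$; the entire content of the theorem is upgrading this null set to the empty set. ``Divert a positive fraction of the mass whenever the martingale grows'' does not by itself achieve that: each diversion removes \emph{some} parcel of mass from the $x_{0}$-column, but nothing forces it to remove any \emph{fixed} input, so a single input can be passed over forever; and since $x_{0}$ is noncomputable, your monotone machine cannot recognize which inputs are heading toward $x_{0}$ in order to stall or reroute exactly those (stalling a positive-measure set would destroy a.e.\ computability). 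What is missing is a device that assigns every individual input a computable eviction deadline. The paper's device is a second coordinate: lift to $2^{\mathbb{N}}\times[0,2]$ and stack the mass in each column $[\sigma]$, $|\sigma|=n$, under the step density $f_{n}(x)=\mu(x{\upharpoonright_{n}})/\nu(x{\upharpoonright_{n}})$ with respect to the reference measure $\nu\otimes\mathcal{L}$ (this is where $\nu$ is used as a base measure, not merely as a martingale). The stage maps satisfy: $(x',y')=T_{n+1}(x,y)$ implies $x'{\upharpoonright_{n}}=x{\upharpoonright_{n}}$ and $0<y\leq y'\leq f_{n+1}(x')$, so mass only moves upward. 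Hence an input sitting at height $y>0$ must exit the $x_{0}$-column by the first $n$ with $f_{n}(x_{0}{\upharpoonright_{n}})<y$, and $f_{n}(x_{0}{\upharpoonright_{n}})\rightarrow0$ exactly because your martingale diverges --- note your quantitative gloss is backwards here: divergence of $\nu/\mu$ along $x_{0}$ means $\mu(x_{0}{\upharpoonright_{n}})$ shrinks \emph{fast} relative to $\nu(x_{0}{\upharpoonright_{n}})$, and this is what makes every eviction deadline finite. If you can supply an equivalent one-dimensional linear ordering of the mass within each cylinder with such deadlines, you will have reconstructed the height coordinate; as written, your sketch only establishes $\lambda(T^{-1}(\{x_{0}\}))=0$, not $T^{-1}(\{x_{0}\})=\varnothing$.
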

The proof of Theorem~\ref{thm:not-CR-nrnf} fills up the rest of
this section. 

Fix $\mu$, and assume $x_{0}$ is not $\mu$-computably random. By
Lemma~\ref{lem:CR-standard-def} there is a computable measure $\nu'$,
such that $\nu'(x_{0}{\upharpoonright_{n}})/\mu(x_{0}{\upharpoonright_{n}})\rightarrow\infty$
as $n\rightarrow\infty$. For technical reasons we replace $\nu'$
with $\nu=\frac{1}{2}\mu+\frac{1}{4}\lambda+\frac{1}{4}\nu'$. We
still have that $\nu(x_{0}{\upharpoonright_{n}})/\mu(x_{0}{\upharpoonright_{n}})\rightarrow\infty$
as $n\rightarrow\infty$, but now $\nu(\sigma)>0$ for all $\sigma\in2^{*}$. 

Also, $\mu$ is absolutely continuous with respect to $\nu$, that
is $\nu(A)=0$ implies $\mu(A)=0$. By the Radon-Nikodym theorem there
is a \emph{density function} (or \emph{Radon-Nikodym derivative})
$f\colon2^{\mathbb{N}}\rightarrow\mathbb{R}$ such that for all $\sigma\in2^{*}$,
$\mu(\sigma)=\int_{[\sigma]}f\,d\nu$. The density is given by the
limit $f(x)=\lim_{n}\mu(x{\upharpoonright_{n}})/\nu(x{\upharpoonright_{n}})$.
This limit converges $\nu$-almost-everywhere by Lemma~\ref{lem:doob}.
Also $0\leq f\leq2$. To see that $f$ is the density, first apply
the dominated convergence theorem (which is applicable since $0\leq f\leq2$)
to get 
\[
\int_{[\sigma]}f(x)\,d\nu(x)=\lim_{n}\int_{[\sigma]}\frac{\mu(x{\upharpoonright_{n}})}{\nu(x{\upharpoonright_{n}})}\,d\nu(x)=\mu(\sigma),
\]
where the second equality comes from
\[
\int_{[\sigma]}\frac{\mu(x{\upharpoonright_{n}})}{\nu(x{\upharpoonright_{n}})}\,d\nu(x)=\mu(\sigma)\qquad(\text{for }n\geq|\sigma|).
\]
Notice by our construction that $f(x_{0})=0$. Also, the area under
the curve has $\nu\otimes\mathcal{L}$-measure one. (Here $\mathcal{L}$
is the Lebesgue measure on $\mathbb{R}$.) See Figure~\ref{fig:density}.

\begin{figure}[H]
\begin{centering}
\begin{center}
\includegraphics{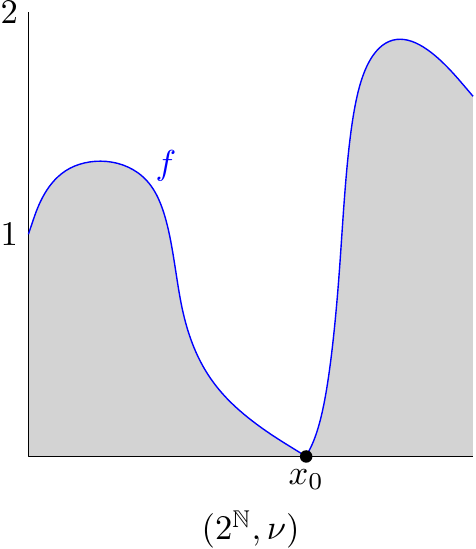}
\par\end{center}
\par\end{centering}

\caption{Graph of the density function $f$ of $\mu$ with respect to $\nu$.
The area under the curve has $\nu\otimes\mathcal{L}$-measure one.}
\label{fig:density}
\end{figure}

Our main tool is the following lemma. (We will be working on the computable
metric space $2^{\mathbb{N}}\times[0,2]$ for convenience. There is
no loss if we replace $[0,2]$ with $2^{\mathbb{N}}$, where $x\in[0,2]$
is identified with the binary expansion of $x/2$.) 
\begin{lem}
\label{lem:atomless-measures-maps} Assume $\mathbb{X}_{1}$ and $\mathbb{X}_{2}$
are computable metric spaces (e.g.~$2^{\mathbb{N}}$ and $2^{\mathbb{N}}\times[0,2]$).
If $\rho_{1}$ and $\rho_{2}$ are computable probability measures
on $\mathbb{X}_{1}$ and $\mathbb{X}_{2}$ respectively and $\rho_{1}$
is atomless, then one can construct an almost-everywhere computable
measure-preserving map $T\colon(\mathbb{X}_{1},\rho_{1})\rightarrow(\mathbb{X}_{2},\rho_{2})$
(uniformly in the codes for $\rho_{1}$ and $\rho_{2}$).\end{lem}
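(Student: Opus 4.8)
The plan is to factor the desired map through Lebesgue measure on the unit interval, exploiting the fact that an atomless measure admits a computable cumulative-distribution map onto $([0,1],\mathcal{L})$, while \emph{every} computable probability measure is the pushforward of $([0,1],\mathcal{L})$ under a quantile map.

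First I would reduce to Cantor space. By the isomorphism theorem of Rute \cite[Prop.~7.9]{rute1} (already used in Section~\ref{sub:comp-metric-spaces}), there are almost-everywhere computable measure-preserving maps $I_{\rho_1}\colon(\mathbb{X}_1,\rho_1)\rightarrow(2^{\mathbb{N}},\rho_1')$ and $I_{\rho_2}^{-1}\colon(2^{\mathbb{N}},\rho_2')\rightarrow(\mathbb{X}_2,\rho_2)$, obtained uniformly from the codes for $\rho_1$ and $\rho_2$. Since a measure isomorphism preserves atomlessness, $\rho_1'$ is again atomless. Thus it suffices to construct, uniformly, an almost-everywhere computable measure-preserving map $T'\colon(2^{\mathbb{N}},\rho_1')\rightarrow(2^{\mathbb{N}},\rho_2')$ and then set $T=I_{\rho_2}^{-1}\circ T'\circ I_{\rho_1}$; the composite is almost-everywhere computable and measure-preserving because each factor is.

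Next I would build $T'$ as $\Psi\circ\Phi$ for maps $\Phi\colon(2^{\mathbb{N}},\rho_1')\rightarrow([0,1],\mathcal{L})$ and $\Psi\colon([0,1],\mathcal{L})\rightarrow(2^{\mathbb{N}},\rho_2')$. For $\Phi$, take the cumulative-distribution map in the lexicographic order, $\Phi(x)=\rho_1'(\{z:z<_{\mathrm{lex}}x\})=\sum_{n:\,x(n)=1}\rho_1'([x{\upharpoonright_n}0])$. Atomlessness is exactly what makes $\Phi$ well behaved: it guarantees $\rho_1'([x{\upharpoonright_n}])\rightarrow\rho_1'(\{x\})=0$, which both gives an effective tail bound (so $\Phi$ is computable, uniformly in the code for $\rho_1'$) and makes the cumulative distribution continuous and surjective onto $[0,1]$, so that $\Phi$ pushes $\rho_1'$ forward to Lebesgue measure. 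For $\Psi$, take the quantile map defined bit by bit: set $\Psi(t)(0)=0$ if $t<\rho_2'([0])$ and $1$ otherwise, subtract the passed-over mass, and recurse. This $\Psi$ is computable at every $t$ avoiding the countably many (hence $\mathcal{L}$-null) breakpoints where $t$ equals a partial sum $\rho_2'([\sigma])$, and it pushes Lebesgue measure forward to $\rho_2'$; no assumption on $\rho_2'$ is needed here, since atoms of $\rho_2'$ merely correspond to intervals on which $\Psi$ is constant.

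Finally I would assemble the pieces. The composite $T'=\Psi\circ\Phi$ is measure-preserving since $\Phi_*\rho_1'=\mathcal{L}$ and $\Psi_*\mathcal{L}=\rho_2'$, and it is almost-everywhere computable because the breakpoint set $B$ on which $\Psi$ fails is $\mathcal{L}$-null, so its $\Phi$-preimage has $\rho_1'$-measure $\mathcal{L}(B)=0$; all constructions are uniform in the measure codes. The main obstacle, and the only place atomlessness is genuinely used, is the step establishing that $\Phi$ is simultaneously computable and \emph{exactly} measure-preserving onto $([0,1],\mathcal{L})$: without atomlessness the cumulative distribution has jumps, $\Phi$ is neither continuous nor surjective, and its pushforward fails to be Lebesgue measure.
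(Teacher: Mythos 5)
Your proposal is correct and takes essentially the same route as the paper, whose proof simply factors through the uniform measure by citing Rute \cite[Prop.~7.16]{rute1} (every atomless computable probability space is isomorphic to $(2^{\mathbb{N}},\lambda)$, uniformly) and \cite[Prop.~7.9]{rute1} (for any computable probability space there is an a.e.\ computable measure-preserving map from $(2^{\mathbb{N}},\lambda)$ onto it). The only difference is that you unpack these two citations into explicit constructions---the lexicographic cumulative-distribution map out of the atomless measure, with the effective modulus coming from the computable sequence $\max_{\sigma\in2^{n}}\rho_{1}'(\sigma)\rightarrow0$, and the quantile map into $\rho_{2}'$ defined off the countable, uniformly computable breakpoint set---so your argument is a sound, self-contained proof of exactly the decomposition the paper outsources.
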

\begin{proof}
Since $\rho_{1}$ is atomless, $(\mathbb{X}_{1},\rho_{1})$ is isomorphic
to $(2^{\mathbb{N}},\lambda)$ \cite[Prop.~7.16]{rute1} (uniformly
in $\rho_{1}$). For any computable measure, there is an almost-everywhere
computable measure-preserving map $T\colon(2^{\mathbb{N}},\lambda)\rightarrow(\mathbb{X}_{2},\rho_{2})$
\cite[Prop.~7.9]{rute1}.
\end{proof}

\subsection{Description of the construction}

Consider the probability measure $\rho$ which is equal to $\nu\otimes\mathcal{L}$
restricted to the area under the curve of $f$. (See Figure~\ref{fig:density}.)
Unfortunately, this is not necessarily a computable measure on $2^{\mathbb{N}}\times[0,2]$. 

As motivation, let us consider for a moment the case where $\rho$
is computable and $f$ is also computable. Then it would be easy to
construct the desired $T$ as follows. Using Lemma~\ref{lem:atomless-measures-maps},
find a measure-preserving almost-everywhere computable map $T'\colon(2^{\mathbb{N}},\lambda)\rightarrow(2^{\mathbb{N}}\times[0,2],\rho)$.
Let $T\colon(2^{\mathbb{N}},\lambda)\rightarrow2^{\mathbb{N}}$ be
$T=\pi\circ T'$ where $\pi\colon2^{\mathbb{N}}\times[0,2]\rightarrow2^{\mathbb{N}}$
denotes the projection map $(x,y)\mapsto x$. The pushforward of $\lambda$
along $T$ is $\lambda_{T}=\mu$. Last, the preimage $\pi^{-1}(\{x_{0}\})$
is contained in $P=\{(x,y)\in2^{\mathbb{N}}\times[0,2]:f(x)=0\}$,
which is a $\rho$-measure-zero $\Pi_{1}^{0}$ subset of $2^{\mathbb{N}}\times[0,2]$
(since we are assuming $f$ is computable and $f(x_{0})=0$). If we
remove the set $P$ from the domain of $\pi$, we still have that
$T$ is almost-everywhere computable,\footnote{An almost-everywhere computable map is exactly a partial computable
map with a $\Pi_{2}^{0}$ domain of measure one. See Rute \cite[\S7]{rute1}
for more details.} but now $T^{-1}(\{x_{0}\})=\varnothing$.

In the general case where $\rho$ is not computable, we will make
this construction work by approximating the density function $f$
with a step function. Namely, let $f_{n}(x)=\mu(x{\upharpoonright_{n}})/\nu(x{\upharpoonright_{n}})$
and let 
\[
\mbox{\ensuremath{\rho}}_{n}=(\nu\otimes\mathcal{L})\upharpoonright\{(x,y)\in2^{\mathbb{N}}\times[0,2]:0<y<f_{n}(x)\}.
\]
Notice for each $n$ that $\rho_{n}$ is a probability measure since
$\int f_{n}\,d\nu=1$. We will build a sequence of a.e.~computable
functions $T_{n}$, where the $n$th function, $T_{n}$, transitions
from $\rho_{n-1}$ to $\rho_{n}$. While the limit of $T_{n}\circ\cdots\circ T_{0}$
may not be a.e.~computable, the first coordinate of its output will
be a.e.~computable, and that is enough to construct $\mu$ in the
desired manner.

\subsection{Requirements of the construction}

Formally, $T_{n}$ will satisfy the following requirements.
\begin{enumerate}
\item [($R_0$)]$T_{0}\colon(2^{\mathbb{N}},\lambda)\rightarrow(2^{\mathbb{N}}\times[0,2],\rho_{0})$
is an a.e.~computable measure-preserving map.
\item [($R_{n}$)]For all $n$,

\begin{enumerate}
\item $T_{n+1}\colon(2^{\mathbb{N}}\times[0,2],\rho_{n})\rightarrow(2^{\mathbb{N}}\times[0,2],\rho_{n+1})$
is measure-preserving and a.e.~computable in $n$.
\item If $(x',y')=T_{n+1}(x,y)$, then $x'{\upharpoonright_{n}}=x{\upharpoonright_{n}}$
and $0<y\leq y'\leq f_{n+1}(x')$.
\end{enumerate}
\item [($R_{\infty}$)]Let $\pi$ be the projection $\pi(x,y)=x$. Then
$T:=\lim_{n}\pi\circ T_{n}\circ\cdots\circ T_{0}$ satisfies

\begin{enumerate}
\item $T$ is well-defined, that is $\lim_{n}\pi\circ T_{n}\circ\cdots\circ T_{0}$
converges pointwise $\lambda$-almost-surely,
\item $T$ is $\lambda$-almost-everywhere computable,
\item $T$ is measure-preserving of type $(2^{\mathbb{N}},\lambda)\rightarrow(2^{\mathbb{N}},\mu)$
and
\item $T^{-1}(\{x_{0}\})=\varnothing$.
\end{enumerate}
\end{enumerate}

\subsection{Construction}

(See Figure~\ref{fig:stages} for an illustration of the first three
stages of the construction.)

\begin{figure}[h]
\begin{centering}
\subfloat[Stage 0]{\begin{centering}
\includegraphics[scale=0.8]{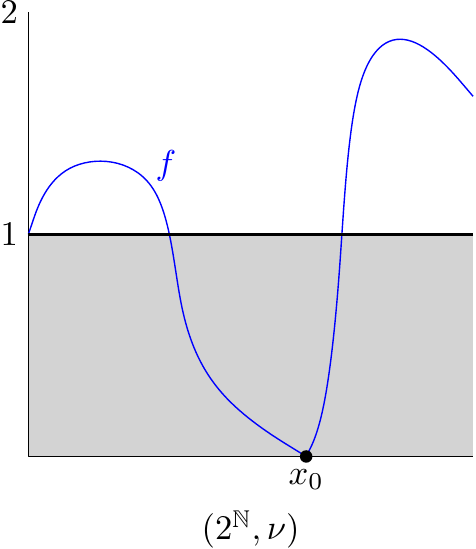}
\par\end{centering}

}\subfloat[Stage 1]{\begin{centering}
\includegraphics[scale=0.8]{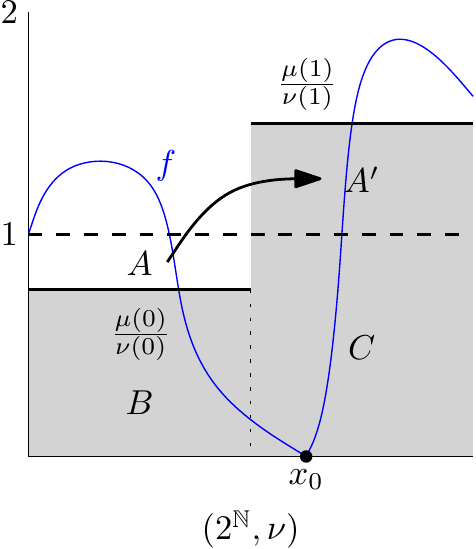}
\par\end{centering}

\label{subfig:Stage-1}}\subfloat[Stage 2]{\begin{centering}
\includegraphics[scale=0.8]{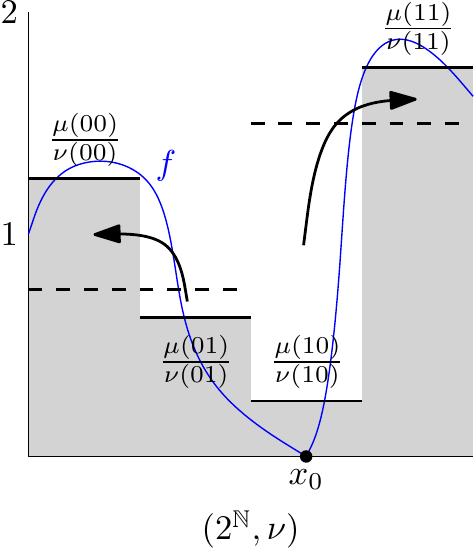}
\par\end{centering}

}
\par\end{centering}

\caption{First three stages of the construction. The solid line is $f_{n}$
and the dashed line is $f_{n-1}$.}
\label{fig:stages}
\end{figure}

\subsubsection*{Stage $0$.}

Our first approximation $f_{0}$ is just the constant function $\mathbf{1}$.
The measure $\rho_{0}$ mentioned above is supported on the rectangle
$2^{\mathbb{N}}\times[0,1]$. Using Lemma~\ref{lem:atomless-measures-maps}
one can compute an a.e.~computable measure-preserving map $T_{0}:(2^{\mathbb{N}},\lambda)\rightarrow(2^{\mathbb{N}}\times[0,2],\rho_{0})$.
Hence requirement ($R_{0}$) is satisfied.

\subsubsection*{Stage $1$.}

For illustration, let us handle Stage $1$ explicitly. The approximation
$f_{1}$, must satisfy $\int f_{1}\,d\nu=1$, therefore some of $f_{1}$
is below $f_{0}=\mathbf{1}$ and some of $f_{1}$ is above. As seen
in Subfigure~\ref{subfig:Stage-1}, the measure $\rho_{1}$ is the
same as $\rho_{0}$ except the mass of $\rho_{0}$ inside the rectangle
labeled $A$ has moved to the rectangle labeled $A'$. These two rectangles
must be of the same $\nu\otimes\mathcal{L}$ measure since $\int f_{1}\,d\nu=f_{0}$.
To construct $T_{1}$, divide up the support of $\rho_{0}$ into the
rectangles $A$, $B$, and $C$ as illustrated in Subfigure~\ref{subfig:Stage-1}.
For $(x,y)\in B,C$, set $T_{1}(x,y)=(x,y)$. 

For $A$, let $T_{1}\upharpoonright A$ be a $\nu\otimes\mathcal{L}$-measure-preserving
map from the interior of $A$ to the interior of $A'$. To construct
$T_{1}\upharpoonright A$ use Lemma~\ref{lem:atomless-measures-maps}
and the following facts: the measure $\nu\otimes\mathcal{L}$ is atomless,
the rectangles $A$ and $A'$ are computable, and the measures $\nu\otimes\mathcal{L}(A)$
and $\nu\otimes\mathcal{L}(A')$ are equal and computable. 

The remainder of $2^{\mathbb{N}}\times[0,2]$ (that is the set of
points not included in the interiors of $A$, $B$, or $C$) forms
an effectively closed set of $\rho_{0}$-measure zero, and therefore
we can leave it out of the domain of $T_{1}$. (In particular, $(x,0)$
is not in the domain of $T_{1}$ for any $x$.)

\subsubsection*{Stage $n+1$.}

This is the same idea as Stage $1$. For every $x$, we have $\int_{[x{\upharpoonright_{n}}]}f_{n+1}\,d\nu=f_{n}(x)$.
Therefore, we know that $\rho_{n+1}$ is the same as $\rho_{n}$ except
that for every $\sigma\in2^{n}$, some of the $\rho_{n}$-mass in
the column $[\sigma]\times[0,2]$ moves between the columns $[\sigma0]\times[0,2]$
and $[\sigma1]\times[0,2]$. If it moves, it must move upward, from
below $f_{n}$ to above $f_{n}$. Yet, none of the mass leaves the
column $[\sigma]\times[0,2]$. See Figure~\ref{fig:stages}.

Construct $T_{n+1}$ in the analogous way to $T_{1}$. In particular,
$T_{n+1}$ does not move mass out of any columns $[\sigma]\times[0,2]$
such that $|\sigma|=n$. It is straightforward to verify that all
the requirements of ($R_{n}$) are satisfied.

\subsection{Verification}

We already verified requirements ($R_{0}$) and ($R_{n}$). Requirement
($R_{\infty}$)(a), the a.e.~convergence of $T:=\lim_{n}\pi\circ T_{n}\circ\cdots\circ T_{0}$
is easy to verify since $T_{n+1}(x,y)$ does not change the first
$n$ bits of $x$. The same is true for the a.e.~computability of
$T$, that is requirement ($R_{\infty}$)(b).

To see that $T\colon(2^{\mathbb{N}},\lambda)\rightarrow(2^{\mathbb{N}},\mu)$
is measure-preserving (requirement ($R_{\infty}$)(c)) choose $\sigma\in2^{*}$.
Since, for all $n$, $T_{n+1}(x,y)$ does not change the first $n$
bits of $x$, $\omega\in T^{-1}[\sigma]$ if and only if both $\omega\in\operatorname{dom}T$
and $(T_{|\sigma|}\circ\cdots\circ T_{0})(\omega)\in[\sigma]\times[0,2]$.
(That is, after stage $n=|\sigma|$, the mass will not move between
columns of that size anymore.) Therefore, for $n=|\sigma|$, 
\begin{align*}
\lambda_{T}(\sigma) & =\lambda(T^{-1}[\sigma])\\
 & =\lambda((T_{n}\circ\cdots\circ T_{0})^{-1}([\sigma]\times[0,2]))\\
 & =\lambda_{T_{n}\circ\cdots\circ T_{0}}([\sigma]\times[0,2])\\
 & =\rho_{n}([\sigma]\times[0,2])\\
 & =\nu\otimes\mathcal{L}\left\{ (x,y)\in[\sigma]\times[0,2]:0<y<f_{n}(x)\right\} \\
 & =\int_{[\sigma]}f_{n}\,d\nu=\frac{\mu(\sigma)}{\nu(\sigma)}\nu(\sigma)=\mu(\sigma).
\end{align*}

Finally, for requirement ($R_{\infty}$)(d), assume for a contradiction
that there is some $\omega_{0}$ such that $T(\omega_{0})=x_{0}$.
Let $(x^{(n)},y^{(n)})=(T_{n}\circ\cdots\circ T_{0})(\omega_{0})$.
By all the parts of requirement ($R_{n}$)(b), we have that $y^{(n)}$
is nonzero, nondecreasing, and bounded above by 
\[
f_{n}(x^{(n)})=\frac{\mu(x^{(n)}{\upharpoonright_{n}})}{\nu(x^{(n)}{\upharpoonright_{n}})}=\frac{\mu(x_{0}{\upharpoonright_{n}})}{\nu(x_{0}{\upharpoonright_{n}})}.
\]
However, since $f(x_{0})=0$, this upper bound converges to $0$,
which contradicts that $y^{(n)}$ is nonzero and nondecreasing.

Therefore, we have proved that there is an a.e.~computable measure-preserving
map $T\colon(2^{\mathbb{N}},\lambda)\rightarrow(2^{\mathbb{N}},\mu)$
such that $T^{-1}(\{x_{0}\})=\varnothing$, proving Theorem~\ref{thm:not-CR-nrnf}.

\section{\label{sec:other-rand-notions}Randomness conservation and non-randomness-from-nothing
for difference randomness and 2-randomness}

In this section, we fill in the remaining rows of the table in the
introduction. While these results are not due to us, they are included
for completeness.

The $\lambda$-difference randoms can be characterized as the $\lambda$-Martin-Löf
randoms which do not compute $\emptyset'$ \cite{Franklin.Ng:2011}.
For simplicity, we will take this to be our definition of \emph{$\mu$-difference random}
for any computable measure $\mu$, as well.
\begin{prop}[{Bienvenu {[}personal comm.{]}}]
\label{prop:Bienvenu}Let $R$ be the class of Martin-Löf random
sequences which do not compute any element in $C\subseteq2^{\mathbb{N}}$
where $C$ is countable, then $R$ satisfies randomness conservation
and no-randomness-from-nothing.\end{prop}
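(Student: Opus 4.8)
The plan is to reduce both halves to the corresponding facts for Martin-Löf randomness (Theorem~\ref{thm:ML-rp-nrfn}) together with two elementary ingredients: that an almost-everywhere computable map satisfies $F(x)\le_T x$ on its domain, and that the upper cone above a noncomputable real is Lebesgue-null (Sacks), suitably relativized. Throughout I read ``$x$ is $\mu$-$R$-random'' as ``$x$ is $\mu$-Martin-Löf random and $x$ computes no element of $C$'', and I may assume every element of $C$ is noncomputable, since otherwise $R$ is empty for every $\mu$ and both properties hold vacuously. Randomness conservation is then immediate: let $F\colon(2^{\mathbb N},\mu)\rightarrow(2^{\mathbb N},\nu)$ be a.e.\ computable and measure-preserving and let $x$ be $\mu$-$R$-random. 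By Theorem~\ref{thm:ML-rp-nrfn}(1), $F(x)$ is $\nu$-Martin-Löf random, and since $F$ is given by a monotone machine $M$ with $x\in\operatorname{dom}F$, we have $F(x)\le_T x$ (from oracle $x$ one computes $M(x\upharpoonright_{n})$ until each bit of $F(x)$ stabilizes). Hence any element of $C$ computed by $F(x)$ would be computed by $x$; as $x$ computes none, neither does $F(x)$, so $F(x)$ is $\nu$-$R$-random.

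The substantive half is no-randomness-from-nothing. Fix $y$ that is $\nu$-$R$-random, so $y$ is $\nu$-Martin-Löf random and $z\not\le_T y$ for every $z\in C$. Set $\theta=\mu_{(\mathrm{id},F)}$, the computable measure supported on the graph of $F$, with second marginal $\nu$. First I would build the preimage with a degree bound: since $(2^{\mathbb N},\theta(\cdot\mid y))$ is, relative to $y$, a computable probability space, the relativization of Lemma~\ref{lem:atomless-measures-maps} gives a $y$-a.e.-computable measure-preserving map $\Psi^{y}\colon(2^{\mathbb N},\lambda)\rightarrow(2^{\mathbb N},\theta(\cdot\mid y))$. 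By randomness conservation relative to $y$, $\Psi^{y}$ carries each $y$-Martin-Löf random $r$ to a point $x=\Psi^{y}(r)$ that is Martin-Löf random relative to $y$ for $\theta(\cdot\mid y)$, and crucially $x\le_T y\oplus r$. The Martin-Löf analogue of the van~Lambalgen argument behind Lemmas~\ref{lem:vL} and~\ref{lem:graph} then shows that for such $x$ the pair $(x,y)$ is $\theta$-Martin-Löf random, whence $x$ is $\mu$-Martin-Löf random and $F(x)=y$.

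The heart is a purely measure-theoretic selection of a good $r$. By Sacks's theorem relativized to $y$, for each fixed $z\in C$ the set $\{r: z\le_T y\oplus r\}$ is $\lambda$-null, precisely because $z\not\le_T y$. As $C$ is countable, $B:=\bigcup_{z\in C}\{r: z\le_T y\oplus r\}$ is $\lambda$-null, while the $y$-Martin-Löf randoms have $\lambda$-measure one; therefore there exists an $r$ that is Martin-Löf random relative to $y$ and lies outside $B$. For the corresponding $x=\Psi^{y}(r)$ we have, by the previous paragraph, that $x$ is $\mu$-Martin-Löf random with $F(x)=y$, and for every $z\in C$ the relation $z\le_T x$ would force $z\le_T y\oplus r$, contradicting $r\notin B$. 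Thus $x$ is $\mu$-$R$-random, as required. I would emphasize that this step needs only that $B$ is $\lambda$-null, not that it is effective in any way — which is why the arbitrary countable set $C$ causes no trouble.

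I expect the main obstacle to be the degree-controlled van~Lambalgen step: verifying that $\theta(\cdot\mid y)$ is computable relative to $y$ for $\nu$-random $y$ (so that Lemma~\ref{lem:atomless-measures-maps} relativizes and the conservation transfer applies), and handling the case where $\theta(\cdot\mid y)$ has atoms. Atoms are harmless, since an atom of $\theta(\cdot\mid y)$ sits at a $y$-computable point $x_{0}\le_T y$, which computes no element of $C$ and can serve as the preimage directly; one therefore applies $\Psi^{y}$ only to the relatively computable continuous part. The remaining facts — the relativized van~Lambalgen theorem and the relativized cone-nullness of Sacks — are standard, so the whole argument rests on assembling them around the measure-one-minus-null selection above.
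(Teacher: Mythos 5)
Your randomness-conservation half is correct and is exactly the paper's argument ($F(x)\le_T x$ via the monotone machine, plus conservation for Martin-L\"of randomness). The no-randomness-from-nothing half, however, has a genuine gap at precisely the step you flag as ``the main obstacle'' and then defer as standard: the $y$-computability of the conditional measure $\theta(\cdot\mid y)$. Everything in your construction hinges on it --- the relativization of Lemma~\ref{lem:atomless-measures-maps} needs a $y$-computable measure to produce $\Psi^{y}$, and without $\Psi^{y}$ you lose the degree bound $x\le_T y\oplus r$ on which the whole Sacks null-cone selection rests. But this fact is not standard, and it fails in general: by Lemma~\ref{lem:cond-measure}, $\theta(\sigma\mid y)$ is the limit of the martingale $\theta(\sigma\times y{\upharpoonright_{n}})/\nu(y{\upharpoonright_{n}})$, which converges at random $y$ (Lemma~\ref{lem:doob}) but with no computable rate, so the limit is only limit-computable in $y$. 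This non-effectivity is exactly why the paper's own Lemma~\ref{lem:vL} and Theorem~\ref{thm:CR-nrfn} work with \emph{blind} (Hippocratic) randomness for $\theta(\cdot\mid y)$ --- blind randoms exist in measure one with no computability assumption on the measure, but they come with no $y$-effective parameterization of the preimages --- and why the introduction singles out maps with computable conditional probabilities as a special restricted class \cite{Rute:mz}. Nor can you bypass $\Psi^{y}$ by running Sacks directly against $\theta(\cdot\mid y)$: the majority-vote argument shows that if $\{x: z\le_T x\}$ has positive $\theta(\cdot\mid y)$-measure then $z$ is computable from the measure $\theta(\cdot\mid y)$ used as an oracle, which is bounded only by $y'$, not by $y$; so from $z\not\le_T y$ you cannot conclude the cone is null, and the selection argument collapses.

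The paper's proof avoids conditional measures entirely and achieves the cone avoidance combinatorially rather than measure-theoretically. Fix a Martin-L\"of random $y$ and let $K_{n}$ be the complement of the $n$th level of the universal Martin-L\"of test, so $K_{n}\subseteq\operatorname{dom}T$ and $K_{n}$ is $\Pi_{1}^{0}$ uniformly in $n$. Then $P_{n}^{y}=\{x\in K_{n}:T(x)=y\}$ is a $\Pi_{1}^{0}(y)$ class, nonempty for some $n$ by no-randomness-from-nothing for Martin-L\"of randomness relativized to $y$. Since $y\in R$, every $z\in C$ satisfies $z\not\le_T y$, so the cone avoidance basis theorem of Jockusch and Soare \cite[Thm 2.5]{Jockusch.Soare:1972}, relativized to $y$, yields a member $x$ of $P_{n}^{y}$ computing no element of the countable set $C$; such an $x$ is Martin-L\"of random (being in $K_{n}$) with $T(x)=y$, hence $x\in R$. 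The basis theorem handles all countably many cones at once, doing the work your null set $B$ was meant to do, but inside a $\Pi_{1}^{0}(y)$ class rather than a measure-one set. If you want to salvage your route, you must either add computability of the conditional probabilities as a hypothesis or replace the measure-theoretic selection with this basis-theorem step.
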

\begin{proof}
Randomness conservation is easy. If $x\in R$ and $y=T(x)$ for a
measure-preserving a.e.~computable map $T\colon(2^{\mathbb{N}},\mu)\rightarrow(2^{\mathbb{N}},\nu)$,
then $y$ is Martin-Löf random (by randomness conservation for Martin-Löf
randomness), but $y$ cannot compute an element of $C$ (since $x$
cannot). Hence $y\in R$.

As for no-randomness-from-nothing, let $K_{n}$ be the complement
of the $n$th level of the universal Martin-Löf test and let $T\colon(2^{\mathbb{N}},\mu)\rightarrow(2^{\mathbb{N}},\nu)$
be a measure-preserving a.e.~computable map. Then $K_{n}\subseteq\operatorname{dom}T$
and $K_{n}$ is $\Pi_{1}^{0}$ in $n$. Fix a Martin-Löf random $y$.
Therefore $P_{n}^{y}:=\{x\in K_{n}:T(x)=y$\} is $\Pi_{1}^{0}$ in
$n$ and $y$. By no-randomness-from-nothing for Martin-Löf randomness
relativized to $y$, $P_{n}^{y}$ is nonempty for some $n$. By a
version of the cone avoidance theorem \cite[Thm 2.5]{Jockusch.Soare:1972}
relativized to $y$, there is a member of $P_{n}^{y}$ which does
not compute a member of $C$.
\end{proof}
Next, \emph{$2$-randomness} is Martin-Löf randomness relative to
$\emptyset'$.
\begin{prop}
\label{prop:2-rand-rp-nrfn}$2$-randomness satisfies randomness conservation
and no-randomness-from-nothing.\end{prop}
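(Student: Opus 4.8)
The plan is to observe that $2$-randomness is simply Martin-Löf randomness carried out relative to the fixed oracle $\emptyset'$, and that both randomness conservation and no-randomness-from-nothing for Martin-Löf randomness (Theorem~\ref{thm:ML-rp-nrfn}) relativize uniformly to any oracle. First I would fix a computable, measure-preserving, a.e.~computable map $T\colon(2^{\mathbb{N}},\mu)\rightarrow(2^{\mathbb{N}},\nu)$. The crucial point is that since $\mu$, $\nu$, and $T$ are all computable, they are in particular computable relative to $\emptyset'$; hence $T$ is an $\emptyset'$-a.e.~computable measure-preserving map between the same two computable probability spaces, now viewed relative to $\emptyset'$.

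For randomness conservation, suppose $x$ is $2$-random for $\mu$, i.e.\ $\mu$-Martin-Löf random relative to $\emptyset'$. Applying part (1) of Theorem~\ref{thm:ML-rp-nrfn}, relativized to $\emptyset'$, to the map $T$ yields that $T(x)$ is $\nu$-Martin-Löf random relative to $\emptyset'$, which is exactly $2$-randomness for $\nu$. For no-randomness-from-nothing, suppose $y$ is $2$-random for $\nu$, i.e.\ $\nu$-Martin-Löf random relative to $\emptyset'$. Applying part (2) of Theorem~\ref{thm:ML-rp-nrfn}, relativized to $\emptyset'$, produces some $x$ that is $\mu$-Martin-Löf random relative to $\emptyset'$ with $T(x)=y$; this $x$ is precisely a $2$-random for $\mu$, as required.

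The only point deserving care is whether Theorem~\ref{thm:ML-rp-nrfn} genuinely relativizes to an arbitrary oracle $a$, here $a=\emptyset'$. Both halves of that theorem rest on the fact that the image under a Turing functional of a $\Pi^0_1$ class contained in the functional's domain is again $\Pi^0_1$; relativizing every occurrence of ``computable'' and ``$\Pi^0_1$'' to $a$ preserves this argument verbatim, since the functional $T$ and the measures $\mu,\nu$ remain ($a$-)computable. Thus the proof of Theorem~\ref{thm:ML-rp-nrfn} goes through word for word with $\emptyset'$ as oracle, and no new ideas are needed.

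I expect the main (indeed only) obstacle to be a purely bookkeeping one: confirming that the hypotheses of Theorem~\ref{thm:ML-rp-nrfn} are still met after relativization, namely that $\mu$, $\nu$, and $T$ are $\emptyset'$-computable and that $T$ remains measure-preserving with $\nu=\mu_T$. Since these objects are already (unrelativized) computable, they are trivially $\emptyset'$-computable, and the measure-preservation condition is an oracle-free statement about the measures; so even this obstacle dissolves immediately. The whole argument is therefore a short and routine relativization.
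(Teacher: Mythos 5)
Your proposal is correct and is essentially the paper's own proof, which states in one line that the Martin-L\"of argument of Theorem~\ref{thm:ML-rp-nrfn} relativizes to $\emptyset'$; you simply spell out the routine bookkeeping. You even correctly insert the caveat that the $\Pi^0_1$ class must lie in the functional's domain for its image to be $\Pi^0_1$, which is exactly the subtlety the paper flags in its footnote on the original Bienvenu--Porter proof.
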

\begin{proof}
The proof for Martin-Löf randomness \cite[Thm.~3.2, 3.5]{BienvenuSubmitted}
can be relativized to $\emptyset'$.
\end{proof}

\section{\label{sec:Characterizing-Martin-L=0000F6f-random}Characterizing
Martin-Löf randomness via randomness conservation and no-randomness-from-nothing}

The concepts of no-randomness-from-nothing and randomness conservation
can be used to characterize various randomness notions. For example,
Theorems~\ref{thm:CR-nrfn} and \ref{thm:not-CR-nrnf}, together,
characterize computable randomness as the weakest randomness notion
satisfying no-randomness-from-nothing. 

Our main result in this section is a new characterization of Martin-Löf
randomness, but first let us consider a few other examples. Schnorr
characterized Schnorr randomness via randomness conservation and the
strong law of large numbers. 
\begin{prop}[{Schnorr \cite[Thm.~12.1]{Schnorr:1971rw}}]
\label{prop:Schnorr-SLLN}For $x\in2^{\mathbb{N}}$ the following
are equivalent.
\begin{enumerate}
\item $x$ is $\lambda$-Schnorr random.
\item For every almost-everywhere computable, measure-preserving map $F:(2^{\mathbb{N}},\lambda)\rightarrow(2^{\mathbb{N}},\lambda)$,
$F(x)$ satisfies the strong law of large numbers, that is 
\[
\frac{1}{n}\sum_{k=0}^{n-1}y(k)=1/2\quad\text{where }y=F(x).
\]

\end{enumerate}
\end{prop}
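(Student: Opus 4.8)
The plan is to prove the two directions separately. The direction (1)$\Rightarrow$(2) is an immediate consequence of the machinery already developed. Indeed, randomness conservation for Schnorr randomness (stated in the introduction, \cite[Thm.~4.1]{BienvenuSubmitted}\cite[Prop.~7.7]{rute1}) tells us that if $x$ is $\lambda$-Schnorr random and $F\colon(2^{\mathbb{N}},\lambda)\rightarrow(2^{\mathbb{N}},\lambda)$ is an a.e.~computable measure-preserving map, then $y=F(x)$ is again $\lambda$-Schnorr random. So I would reduce the problem to showing that every $\lambda$-Schnorr random sequence satisfies the strong law of large numbers. This is a classical fact: the SLLN failure set $\{y : \lim_n \frac1n\sum_{k<n} y(k) \neq 1/2\}$ is a $\lambda$-null $\Sigma^0_2$ (indeed effectively null) set, and one exhibits an explicit Schnorr test (a computable-rate martingale, or directly an integral test via the large-deviation bounds of Chernoff/Hoeffding, whose tail sums $\sum_n \lambda\{|\frac1n\sum_{k<n} y(k) - 1/2| > \varepsilon_n\}$ are computably summable) that captures exactly this set. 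Hence no Schnorr random can lie in it.

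For the harder direction (2)$\Rightarrow$(1), I would argue the contrapositive: if $x$ is \emph{not} $\lambda$-Schnorr random, I must produce a single a.e.~computable measure-preserving map $F\colon(2^{\mathbb{N}},\lambda)\rightarrow(2^{\mathbb{N}},\lambda)$ such that $F(x)$ violates the SLLN. Since $x$ is not Schnorr random, there is a Schnorr test witnessing this — concretely, a computable measure $\nu$ (or a lower semicomputable $t$ with $\int t\,d\lambda=1$) whose associated martingale $\nu(x{\upharpoonright_n})/\lambda(x{\upharpoonright_n})$ is unbounded along $x$ while having a computable rate of growth on the test's null set. The strategy is to transform this unbounded-martingale behavior into a macroscopic bias in the image sequence, so that the relative frequency of ones in $F(x)$ drifts away from $1/2$. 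A clean way to package this is to use the savings-account/betting reformulation of Schnorr randomness and to route the construction through an a.e.~computable isomorphism, invoking Lemma~\ref{lem:atomless-measures-maps} to build the required measure-preserving maps between the relevant atomless computable spaces.

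The main obstacle will be the \textbf{uniformity and totality} required to get a genuine a.e.~computable \emph{measure-preserving} map, rather than merely a Schnorr test: I must convert the (possibly non-uniform, computably-bounded) growth of the test martingale at $x$ into a map $F$ defined on a $\lambda$-measure-one $\Pi^0_2$ set whose pushforward is \emph{exactly} $\lambda$ and which drives frequencies away from $1/2$ precisely on the test's null set. The delicate point is ensuring measure preservation is exact (not merely approximate) while the frequency deviation is forced; I expect to handle this by encoding the martingale's doubling/halving decisions into an explicit bit-recoding scheme reminiscent of von Neumann's procedure, verifying measure preservation via the computability of $\nu(\sigma)/\lambda(\sigma)$ on the relevant cylinders and controlling convergence using an effective martingale convergence argument along the lines of Lemma~\ref{lem:doob}. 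This is essentially Schnorr's original construction, and the bookkeeping to make $F$ total-measure-one and its pushforward computably equal to $\lambda$ is where the real work lies.
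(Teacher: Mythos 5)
Your direction (1)$\Rightarrow$(2) matches the paper exactly: the paper gives no proof of this proposition beyond the one-line remark that (1)$\Rightarrow$(2) follows from randomness conservation for Schnorr randomness together with the classical fact that Schnorr randoms satisfy the strong law of large numbers; the hard direction (2)$\Rightarrow$(1) is simply cited to Schnorr's book (with the G\'acs--Hoyrup--Rojas strengthening to isomorphisms mentioned afterwards). So there is no paper proof of the hard direction to match, and judged on its own, your attempt there has a genuine gap: the map $F$ is never constructed. Two concrete problems. First, Lemma~\ref{lem:atomless-measures-maps} cannot play the role you assign it: it produces \emph{some} a.e.~computable measure-preserving map between the given spaces, with no control over the value (or even the definedness) of that map at the particular point $x$; forcing $F(x)$ to fail the SLLN is precisely a pointwise constraint at $x$, so a bare existence lemma gives you nothing. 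Second, your stated witness for non-Schnorr-randomness is off in a way that matters: a computable measure $\nu$ with $\nu(x{\upharpoonright_{n}})/\lambda(x{\upharpoonright_{n}})$ unbounded along $x$ witnesses only failure of \emph{computable} randomness (Lemma~\ref{lem:CR-standard-def}). If a construction from mere unboundedness sufficed, then (2) would fail for every non-computably-random $x$, which combined with (1)$\Rightarrow$(2) would show that every Schnorr random is computably random --- false. Hence the computable success rate (a Schnorr order, or equivalently an integral test $t$ with $\int t\,d\lambda=1$ and $t(x)=\infty$) must enter the definition of $F$ in an essential way; you mention it parenthetically but your sketch never uses it.

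What is actually required is a staged construction in the spirit of the paper's own proof of Theorem~\ref{thm:not-CR-nrnf}: an explicit sequence of exactly measure-preserving recodings, with requirements guaranteeing (a) a.e.~pointwise convergence to a well-defined $F$, (b) pushforward exactly $\lambda$, verified cylinder by cylinder using the computability of the test levels' measures, and (c) a combinatorial mechanism by which membership of $x$ in the $n$th level of the test (of exactly computable measure, say $2^{-n}$) biases a designated block of output bits, so that the frequency of ones along $F(x)$ drifts away from $1/2$. Labeling this ``bookkeeping'' understates that it is the entire content of the theorem; as written, your proposal establishes the easy direction (which is correct and is all the paper itself argues) and defers the hard direction to the very citation the paper uses.
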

Notice (1) implies (2) follows from randomness conservation since
Schnorr randoms satisfy the strong law of large numbers. Gács, Hoyrup,
and Rojas \cite[Prop.~6]{Gacs2011} strengthened this result by showing
that ``measure-preserving map'' can be replaced with ``isomorphism.''

In Lemma~\ref{lem:CR-standard-def}, we characterized computable
randomness via the ratio $\nu(x{\upharpoonright_{n}})/\mu(x{\upharpoonright_{n}})$.
This function $\sigma\mapsto\nu(\sigma)/\mu(\sigma)$ is known as
a martingale and represents a gambling strategy. (See, for example,
\cite[\S6.3.1, Ch.~7]{Downey2010}\cite[Ch.~7]{Nies2009}.) Kolmogorov-Loveland
randomness (on the fair-coin measure $\lambda$) is a randomness notion
similar to computable randomness except that one can bet on the bits
out of order. Formally, call $F\colon2^{\mathbb{N}}\rightarrow2^{\mathbb{N}}$
a \emph{total computable nonmonotonic selection map} if $y=F(x)$
is computed as follows.
\begin{itemize}
\item At stage 0, the algorithm computes an index $i_{0}\in\mathbb{N}$
independently of $x$ and sets $y_{0}=x(i_{0})$.
\item At stage 1, the algorithm computes an index $i_{1}=i_{1}(y_{0})\in\mathbb{N}\smallsetminus\{i_{0}\}$
depending only on $y_{0}$ and sets $y_{1}=x(i_{1})$.
\item At stage $s+1$, we have calculated $y{\upharpoonright_{s+1}}$ where
$y_{n}=x_{i_{n}}$ for $0\leq n\leq s$. The algorithm chooses a new
index $i_{s+1}(y{\upharpoonright_{s+1}})\in\mathbb{N}\smallsetminus\{i_{0},i_{1},\ldots,i_{n}\}$
depending only on $y{\upharpoonright_{s+1}}$ and sets $y_{s+1}=x(i_{s+1})$.
\item The algorithm is total in that for all $x$ and $n$, the bit $(F(x))(n)$
is calculated.\footnote{Kolmogorov-Loveland randomness can be defined via non-total selection
maps as well \cite[\S3]{Merkle2003}.}
\end{itemize}
One can easily see that total computable nonmonotonic selection maps
are measure-preserving maps of type $(2^{\mathbb{N}},\lambda)\rightarrow(2^{\mathbb{N}},\lambda)$.

A sequence $x\in2^{\mathbb{N}}$ is \emph{Kolmogorov-Loveland random}
if given a total nonmonotonic selection map $F$, no computable $\lambda$-martingale
succeeds on $F(x)$ --- that is there is no computable measure $\nu$
such that $\liminf_{n}\nu(F(x{\upharpoonright_{n}}))/\lambda(F(x{\upharpoonright_{n}}))<\infty$.
Kolmogorov-Loveland randomness is between Martin-Löf randomness and
computable randomness, and it is a major open question whether Kolmogorov-Loveland
randomness equals Martin-Löf randomness. (See \cite[\S7.5]{Downey2010}\cite[\S7.6]{Nies2009}
for more about Kolmogorov-Loveland randomness.)

Bienvenu and Porter \cite[Thm.~4.2]{BienvenuSubmitted} noticed the
definition of Kolmogorov-Loveland randomness can be restated as in
this next proposition. They noticed that this proposition, when combined
with Muchnik's result that Kolmogorov-Loveland randomness is strictly
stronger than computable randomness \cite{Muchnik:1998rt}, implies
that computable randomness does not satisfy randomness conservation.
\begin{prop}
\label{prop:KL-CR}For $x\in2^{\mathbb{N}}$ the following are equivalent.
\begin{enumerate}
\item $x$ is $\lambda$-Kolmogorov-Loveland random.
\item $F(x)$ is $\lambda$-computably random for every total computable
nonmonotonic selection map $F\colon2^{\mathbb{N}}\rightarrow2^{\mathbb{N}}$.
\end{enumerate}
\end{prop}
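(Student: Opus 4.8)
The plan is to recognize this proposition as a direct translation between two characterizations of the same condition, with Lemma~\ref{lem:CR-standard-def} serving as the bridge. Unwinding the definition given above, $x$ is $\lambda$-Kolmogorov--Loveland random exactly when, for every total computable nonmonotonic selection map $F$ and every computable measure $\nu$, the monotonic martingale $\sigma\mapsto\nu(\sigma)/\lambda(\sigma)$ fails to succeed on the selected sequence $F(x)$, i.e.\ $\liminf_n \nu(F(x){\upharpoonright_n})/\lambda(F(x){\upharpoonright_n})$ is finite. The point is that, for a fixed $F$, this inner condition is precisely the martingale characterization of computable randomness of $F(x)$.

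First I would fix an arbitrary total computable nonmonotonic selection map $F$ and apply Lemma~\ref{lem:CR-standard-def} to the sequence $y=F(x)$ with the fair-coin measure $\mu=\lambda$. Since $\lambda(\sigma)=2^{-|\sigma|}>0$ for every $\sigma\in2^{*}$, the positivity clause $\lambda(y{\upharpoonright_n})>0$ of Lemma~\ref{lem:CR-standard-def} is automatically satisfied, so that lemma reduces to the statement: $F(x)$ is $\lambda$-computably random if and only if $\liminf_n \nu(F(x){\upharpoonright_n})/\lambda(F(x){\upharpoonright_n})<\infty$ for every computable measure $\nu$.

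With this reduced form in hand, both implications are immediate and amount to commuting a universal quantifier over $F$. For $(1)\Rightarrow(2)$, if $x$ is Kolmogorov--Loveland random then for the fixed $F$ and every computable $\nu$ the liminf is finite, so $F(x)$ is $\lambda$-computably random by the reduced Lemma~\ref{lem:CR-standard-def}; as $F$ was arbitrary, $(2)$ follows. For $(2)\Rightarrow(1)$, if $F(x)$ is $\lambda$-computably random for every such $F$, then for each $F$ and each computable $\nu$ the liminf is finite, which is exactly the defining condition of Kolmogorov--Loveland randomness.

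Because the argument is a bookkeeping match of quantifiers rather than a construction, I do not expect a genuine obstacle. The only point requiring care is the precise form of the success condition: computable randomness is characterized through the \emph{liminf} of the martingale ratio in Lemma~\ref{lem:CR-standard-def}, so I would state the Kolmogorov--Loveland success condition in the same liminf form and explicitly discharge the positivity hypothesis of Lemma~\ref{lem:CR-standard-def} using the strict positivity of $\lambda$ on all cylinders. No compactness, measure-preservation, or conservation machinery is needed.
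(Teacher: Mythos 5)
Your proof is correct and takes essentially the same route as the paper, which offers no separate argument: it presents the proposition (following Bienvenu and Porter) as a direct restatement of its definition of Kolmogorov--Loveland randomness, and your quantifier bookkeeping through Lemma~\ref{lem:CR-standard-def} is exactly that omitted unwinding, with the positivity clause correctly discharged since $\lambda(\sigma)=2^{-|\sigma|}>0$. You also rightly state the success condition in the liminf form matching Lemma~\ref{lem:CR-standard-def}; note the paper's definition of Kolmogorov--Loveland randomness contains a typo (success of a martingale should read $\liminf_{n}\nu(F(x){\upharpoonright_{n}})/\lambda(F(x){\upharpoonright_{n}})=\infty$, not $<\infty$), and your reading is the intended one.
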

Similarly Rute \cite{rute1} (based on a ``martingale process''
characterization of Martin-Löf randomness by Merkle, Mihailovi\'{c},
and Slaman \cite[\S4]{Merkle2006}) showed that Martin-Löf randomness
and computable randomness can be characterized in terms of each other.
\begin{prop}[{Rute \cite[Cor.~9.6]{rute1}}]
\label{prop:ML-CR-Rute}For $x\in2^{\mathbb{N}}$ the following are
equivalent.
\begin{enumerate}
\item $x$ is $\lambda$-Martin-Löf random.
\item $F(x)$ is $\lambda_{F}$-computably random for every almost-everywhere
computable map $F\colon(2^{\mathbb{N}},\lambda)\rightarrow2^{\mathbb{N}}$.
\end{enumerate}
\end{prop}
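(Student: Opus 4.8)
The plan is to prove the two implications separately. The implication $(1)\Rightarrow(2)$ is immediate from randomness conservation: if $x$ is $\lambda$-Martin-L\"{o}f random and $F\colon(2^{\mathbb{N}},\lambda)\rightarrow2^{\mathbb{N}}$ is almost-everywhere computable, then $F\colon(2^{\mathbb{N}},\lambda)\rightarrow(2^{\mathbb{N}},\lambda_{F})$ is measure-preserving, so $F(x)$ is $\lambda_{F}$-Martin-L\"{o}f random by Theorem~\ref{thm:ML-rp-nrfn}(1), and hence $\lambda_{F}$-computably random since Martin-L\"{o}f randomness implies computable randomness (Definition~\ref{def:randomness}). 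No further work is needed here, and this direction holds for \emph{every} such $F$ at once.

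The content is the converse $(2)\Rightarrow(1)$, which I would prove in the contrapositive: assuming $x$ is not $\lambda$-Martin-L\"{o}f random, I would build a \emph{single} almost-everywhere computable map $F$ for which $F(x)$ fails to be $\lambda_{F}$-computably random. By Lemma~\ref{lem:CR-standard-def} it is enough to exhibit a computable measure $\nu$ on $2^{\mathbb{N}}$ with $\nu(F(x){\upharpoonright_{k}})/\lambda_{F}(F(x){\upharpoonright_{k}})\rightarrow\infty$; that is, an honestly computable $\lambda_{F}$-martingale diverging along $F(x)$. The raw data for this is a universal left-c.e.\ supermartingale $d$ for $\lambda$ with $d(\varepsilon)\le1$ and, after applying the savings trick, $\lim_{k}d(x{\upharpoonright_{k}})=\infty$, together with a computable nondecreasing rational approximation $d_{s}\nearrow d$.

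The obstruction is that $d$ is only left-c.e., while computable randomness on the image requires a genuinely computable strategy. Following the martingale-process viewpoint of Merkle, Mihailovi\'{c}, and Slaman \cite{Merkle2006}, the map $F$ plays the role of a time change that writes the approximation history into its own output: $F$ copies the bits of $x$ but interleaves between them a record of each increase of $d_{s}(x{\upharpoonright_{k}})$ as $s$ grows. These records are computably readable from $y=F(x)$ alone, so a computable measure $\nu$ on the image can bet according to the recorded increments and thereby realize $d$ as a legitimate computable $\lambda_{F}$-martingale, whose value along $y$ is a fixed multiple of $d(x{\upharpoonright_{k}})$ and so tends to infinity. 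Because $F$ is partial computable with a $\Pi_{2}^{0}$ domain of $\lambda$-measure one, the pushforward $\lambda_{F}$ is automatically a computable probability measure, and this delivers the desired failure of computable randomness for $F(x)$.

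The main obstacle is the bookkeeping that converts the left-c.e.\ object into a computable one on the image while respecting the measure constraints. One must arrange that the interleaved records consume exactly the $\lambda_{F}$-mass needed for the recorded strategy to define, after normalization, a legitimate computable probability measure $\nu$ satisfying $\nu(\tau0)+\nu(\tau1)=\nu(\tau)$; that $F$ produces an infinite output for $\lambda$-almost-every $x$, so that $\lambda_{F}$ is a probability measure and $F$ is genuinely almost-everywhere computable; and that the reconstructed capital still diverges along $F(x)$. This last point is exactly where the savings property of $d$ is used, to keep the approximation records from oscillating in a way that would either stall the output or keep the capital bounded.
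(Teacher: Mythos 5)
Your direction $(1)\Rightarrow(2)$ is fine and is exactly the standard argument: conservation for Martin-L\"of randomness (Theorem~\ref{thm:ML-rp-nrfn}(1)) plus the implication from Martin-L\"of to computable randomness (one should also note that $x\in\operatorname{dom}F$, which holds because the complement of the domain is a $\Sigma_{2}^{0}$ $\lambda$-null set and Martin-L\"of randoms avoid such sets). Note that the paper itself gives no proof of this proposition: it quotes it from Rute \cite[Cor.~9.6]{rute1}, says the proof is based on the martingale-process characterization of Martin-L\"of randomness of Merkle, Mihailovi\'{c}, and Slaman \cite[\S4]{Merkle2006}, and remarks that it is \emph{essential} that $\lambda_{F}$ may differ from $\lambda$. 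So you are pointing at the right machinery for $(2)\Rightarrow(1)$, but the specific construction you describe --- $F$ copies the bits of $x$ and interleaves \emph{between} them records of the increases of $d_{s}(x{\upharpoonright_{k}})$ --- provably cannot work, and the failure is not bookkeeping. Since $d_{s}(\sigma)$ is a computable function of $(s,\sigma)$, any such record stream emitted on a computable schedule is a computable function of the bits of $x$ read so far. Writing $\tau_{\sigma}$ for the output committed after reading $\sigma$, one then has $[\sigma]\subseteq F^{-1}[\tau_{\sigma}]$, hence $\lambda_{F}(\tau_{\sigma})\geq\lambda(\sigma)$, and for any computable measure $\nu$ on the image the function $\sigma\mapsto\nu(\tau_{\sigma})$ is a computable supermartingale with respect to $\lambda$ (note $\nu(\tau_{\sigma0})+\nu(\tau_{\sigma1})\leq\nu(\tau_{\sigma}0)+\nu(\tau_{\sigma}1)=\nu(\tau_{\sigma})$), satisfying
\[
\frac{\nu(\tau_{x{\upharpoonright_{k}}})}{\lambda(x{\upharpoonright_{k}})}\;\geq\;\frac{\nu(\tau_{x{\upharpoonright_{k}}})}{\lambda_{F}(\tau_{x{\upharpoonright_{k}}})}.
\]
So if $\nu/\lambda_{F}$ diverges along $F(x)$, a computable supermartingale (convertible to a computable martingale in the usual way) succeeds on $x$, i.e.\ $x$ is not $\lambda$-computably random. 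Since computably random sequences that are not Martin-L\"of random exist (the paper notes after Definition~\ref{def:randomness} that the implications do not reverse), your $F$ cannot witness the contrapositive for such $x$: intuitively, the record bits have conditional $\lambda_{F}$-probability $1$ given the past, so betting on them earns nothing, and you are left with an ordinary computable martingale on $x$.

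The missing idea is precisely the nontrivial content of the martingale-process machinery, which your sketch files under ``bookkeeping.'' Increases of $d_{s}$ at level $k$ keep arriving at stages unbounded in $s$ after the point at which a level-by-level schedule must emit bit $x(k)$, so that schedule either stalls the output or forfeits capital; the cure is a computable \emph{stopping-time} structure in which $F$ reads a variable, stage-dependent number of bits of $x$ before committing each output symbol, betting whenever the approximation shows an increase somewhere along the prefix read so far. Under such a time change an output cylinder pulls back to a c.e.\ open set that is not clopen, so the pulled-back capital process is only left-c.e.\ rather than computable --- this is exactly where ``$\lambda_{F}$ may not be $\lambda$'' does real work, and why image computable randomness can be strictly stronger than computable randomness of $x$. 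A repaired proof should either invoke \cite[\S4]{Merkle2006} as a black box (every non-Martin-L\"of-random $x$ is defeated by a computable martingale process) and then carry out the comparatively routine translation of a martingale process into a pair $(F,\nu)$ with $\nu/\lambda_{F}$ diverging along $F(x)$, or else reprove that theorem; your current sketch does neither.
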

It is essential to Rute's proof that $\lambda_{F}$ may be not be
$\lambda$.

Combining Theorem~\ref{thm:not-CR-nrnf}, Proposition~\ref{prop:ML-CR-Rute},
and randomness conservation for Martin-Löf randomness we have the
following. (Here $\mathcal{M}_{\textnormal{comp}}^{1}$ denotes the
set of computable probability measures on $2^{\mathbb{N}}$.)
\begin{thm}
\label{thm:ML-characterization-measure}Let $A\subseteq2^{\mathbb{N}}\times\mathcal{M}_{\textnormal{comp}}^{1}$.
The following are equivalent.
\begin{enumerate}
\item $(x,\mu)$ is in $A$ if and only if $x$ is $\mu$-Martin-Löf random.
\item $A$ is the largest subset of $2^{\mathbb{N}}\times\mathcal{M}_{\textnormal{comp}}^{1}$
closed under no-randomness-from-nothing and randomness conservation.
\end{enumerate}
\end{thm}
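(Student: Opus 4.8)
The plan is to identify the set $A_{0}:=\{(x,\mu):x\text{ is }\mu\text{-Martin-L\"{o}f random}\}$ as the unique largest subset closed under both properties; the equivalence of (1) and (2) then follows formally. First I would record that $A_{0}$ itself is closed under both operations: closure under randomness conservation is exactly Theorem~\ref{thm:ML-rp-nrfn}(1), and closure under no-randomness-from-nothing is exactly Theorem~\ref{thm:ML-rp-nrfn}(2). Thus the whole content reduces to the key lemma that \emph{every} subset $A$ closed under randomness conservation and no-randomness-from-nothing satisfies $A\subseteq A_{0}$. Granting this, $A_{0}$ is closed and contains every closed set, hence is the largest one; so (1) asserts $A=A_{0}$ while (2) asserts that $A$ is the largest closed set, and these coincide.

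To prove the key lemma I would proceed in three steps, exploiting the interplay between the two closure properties. \textbf{Step 1 (every pair in $A$ is computably random).} Suppose $(y,\nu)\in A$ but $y$ is not $\nu$-computably random. By Theorem~\ref{thm:not-CR-nrnf} there is an a.e.~computable measure-preserving $T\colon(2^{\mathbb{N}},\lambda)\rightarrow(2^{\mathbb{N}},\nu)$ with $T^{-1}(\{y\})=\varnothing$. Applying the closure of $A$ under no-randomness-from-nothing to this $T$ would produce some $\omega$ with $(\omega,\lambda)\in A$ and $T(\omega)=y$, contradicting $T^{-1}(\{y\})=\varnothing$. Hence $y$ is $\nu$-computably random. \textbf{Step 2 (fair-coin pairs in $A$ are Martin-L\"{o}f random).} Let $(\omega,\lambda)\in A$. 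For any a.e.~computable $G\colon(2^{\mathbb{N}},\lambda)\rightarrow2^{\mathbb{N}}$, closure under randomness conservation gives $(G(\omega),\lambda_{G})\in A$, so by Step~1 the image $G(\omega)$ is $\lambda_{G}$-computably random. Since this holds for every such $G$, Proposition~\ref{prop:ML-CR-Rute} yields that $\omega$ is $\lambda$-Martin-L\"{o}f random.

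\textbf{Step 3 (general pairs).} Finally let $(x,\mu)\in A$ be arbitrary. By Lemma~\ref{lem:atomless-measures-maps} (taking $\rho_{1}=\lambda$, which is atomless) there is an a.e.~computable measure-preserving $T\colon(2^{\mathbb{N}},\lambda)\rightarrow(2^{\mathbb{N}},\mu)$; closure of $A$ under no-randomness-from-nothing then supplies $\omega$ with $(\omega,\lambda)\in A$ and $T(\omega)=x$. By Step~2 the point $\omega$ is $\lambda$-Martin-L\"{o}f random, so randomness conservation for Martin-L\"{o}f randomness (Theorem~\ref{thm:ML-rp-nrfn}(1)) makes $x=T(\omega)$ a $\mu_{T}=\mu$-Martin-L\"{o}f random. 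Thus $(x,\mu)\in A_{0}$, which completes the key lemma and hence the theorem.

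The step I expect to require the most care is the reduction in Step~3, which is exactly what lets me invoke Proposition~\ref{prop:ML-CR-Rute} only at the fair-coin measure. The proposition is stated solely for $\lambda$, so rather than proving a version for arbitrary computable $\mu$ --- which would force me to confront atoms, since $(2^{\mathbb{N}},\mu)$ need not be isomorphic to $(2^{\mathbb{N}},\lambda)$ --- I use no-randomness-from-nothing to pull an arbitrary $(x,\mu)$ back to a fair-coin witness $\omega$ and then push the resulting Martin-L\"{o}f randomness forward along $T$. I would double-check that the map furnished by Lemma~\ref{lem:atomless-measures-maps} is genuinely measure-preserving onto $\mu$, so that $\mu_{T}=\mu$, and that the two closure hypotheses are applied with the correct orientation throughout (randomness conservation forward, no-randomness-from-nothing backward).
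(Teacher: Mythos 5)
Your proposal is correct and follows essentially the same route as the paper's proof: it uses exactly the same ingredients (Lemma~\ref{lem:atomless-measures-maps} to map $(2^{\mathbb{N}},\lambda)$ onto $(2^{\mathbb{N}},\mu)$, Theorem~\ref{thm:not-CR-nrnf} to rule out non-computably-random pairs, Proposition~\ref{prop:ML-CR-Rute} at the fair-coin measure, and randomness conservation for Martin-L\"{o}f randomness), merely repackaged as a direct three-step contrapositive rather than the paper's single chain of contradictions. Your explicit handling of why $A_{0}$ being closed and containing every closed set settles the equivalence of (1) and (2) is slightly more careful than the paper's one-line treatment, but the mathematical content is identical.
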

\begin{proof}
The Martin-Löf randoms satisfy no-randomness-from-nothing and randomness
conservation.

Conversely, assume $A$ is strictly larger than the Martin-Löf randoms.
Then there is some $(y,\mu)\in A$ such that $y$ is not $\mu$-Martin-Löf
random. Now consider some almost-everywhere computable $F\colon(2^{\mathbb{N}},\lambda)\rightarrow(2^{\mathbb{N}},\mu)$,
which exists by Lemma~\ref{lem:atomless-measures-maps}.

By no-randomness-from-nothing for $A$, there is some $x\in2^{\mathbb{N}}$
such that $(x,\lambda)\in A$ and $F(x)=y$. By randomness conservation
for Martin-Löf randomness, $x$ cannot be $\lambda$-Martin-Löf random. 

By Proposition~\ref{prop:ML-CR-Rute}, there is some almost-everywhere
computable map $G\colon(2^{\mathbb{N}},\lambda)\rightarrow2^{\mathbb{N}}$
such that $G(x)$ is not $\lambda_{G}$-computably random. However,
$(G(x),\lambda_{G})\in A$ by randomness conservation.

By Theorem~\ref{thm:not-CR-nrnf}, there is an almost-everywhere
computable map $H\colon(2^{\mathbb{N}},\lambda)\rightarrow(2^{\mathbb{N}},\lambda_{G})$
such that $H^{-1}(\{G(x)\})=\varnothing$, contradicting no-randomness-from-nothing
for $A$. 
\end{proof}
We can get a stronger result by using a recent result of Petrovi\'{c}
\cite{Petrovic:}. He considered a variant of Kolmogorov-Loveland
randomness, where instead of betting on bits of $x$, one bets on
whether or not $x$ is in some clopen set $C\subseteq2^{\mathbb{N}}$
(a set is clopen if and only if it is a finite union of basic open
sets). Specifically, one computes a sequence $(C_{n})_{n\in\mathbb{N}}$
of clopen sets such that $\lambda(C_{n})=1/2$ for all $n$ and the
collection $\mathcal{C}=\{C_{n}\}_{n\in\mathbb{N}}$ is mutually $\lambda$-independent
in the sense of probability theory, that is for all finite $\mathcal{A}\subset\mathcal{C}$,
$\lambda(\bigcap_{C\in\mathcal{A}}C)=\prod_{C\in\mathcal{A}}\lambda(C)$.
This induces a total computable map $F(x)$ such that $(F(x))_{n}=1$
if and only if $x\in C_{n}$. Using notation similar to Petrovi\'{c}'s,
call such a map a \emph{sequence-set map}. Since the collection $\{C_{n}\}_{n\in\mathbb{N}}$
is mutually independent, a sequence-set map is a measure-preserving
map of type $F\colon(2^{\mathbb{N}},\lambda)\rightarrow(2^{\mathbb{N}},\lambda)$.
Framed in this way, Petrovi\'{c} proved the following.
\begin{thm}[Petrovi\'{c} \cite{Petrovic:}]
\label{thm:Petrovic}If $x$ is not $\lambda$-Martin-Löf random,
then there is a total computable sequence-set function $F:2^{\mathbb{N}}\rightarrow2^{\mathbb{N}}$
such that $F(x)$ is not $\lambda$-computably random.
\end{thm}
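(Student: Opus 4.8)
The plan is to start from a left-c.e.\ witness to the non-randomness of $x$ and to repackage it as betting on a computable, mutually independent family of measure-$\frac12$ clopen sets; this is the clopen-set analogue of the nonmonotonic reformulation in Proposition~\ref{prop:KL-CR}. Since $x$ is not $\lambda$-Martin-L\"of random, there is a left-c.e.\ (lower semicomputable) supermartingale for $\lambda$ succeeding on $x$; equivalently, by the Solovay-test reformulation, there is a computable sequence of clopen sets $(A_k)_k$ with $\sum_k\lambda(A_k)<\infty$ and $x\in A_k$ for infinitely many $k$. Reading these components off the successive levels of a Martin-L\"of test, I may assume the components that actually contain $x$ have arbitrarily small measure, so that betting on them offers unbounded odds. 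The goal is then to manufacture a total computable sequence-set map $F$ together with a \emph{computable} martingale on $F(x)$ that gains a bounded-below amount at each of the infinitely many indices $k$ with $x\in A_k$, so that the capital tends to infinity and $F(x)$ fails the martingale criterion of Lemma~\ref{lem:CR-standard-def}.

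The clopen query sets must satisfy $\lambda(C_n)=\frac12$ and full mutual independence, and here a subtlety forces the construction to be genuinely lossy: a family of single-coordinate sets $C_n=\{x:x(\pi(n))=1\}$ would make $F$ an almost-everywhere computable measure-preserving isomorphism, and by Lemma~\ref{lem:isomorphism} such a map conserves computable randomness in both directions, so it could never send a computably random (but non-Martin-L\"of-random) $x$ to a non-computably-random point. I would therefore build each $C_n$ by masking a test-driven clopen event with a fresh coordinate of $x$, writing $C_n=E_n\mathbin{\triangle}\{x:x(j_n)=1\}$, where $E_n$ is a finite Boolean combination of the test cylinders and the masking coordinates $j_n$ are distinct and disjoint from every coordinate used by the $E_m$. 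Conditioning on the values of the $E_m$, the indicators $\mathbf 1_{C_n}=\mathbf 1_{E_n}\oplus x(j_n)$ become independent fair bits, so the whole family is mutually independent with each $\lambda(C_n)=\frac12$ regardless of how the $E_n$ overlap; interleaving reads of the remaining coordinates makes $F$ total and measure-preserving of type $(2^{\mathbb N},\lambda)\to(2^{\mathbb N},\lambda)$. The masking coordinates are exactly the information $F$ discards, so $F$ is not an isomorphism, which is what opens the door to destroying computable randomness.

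The winning martingale on $y=F(x)$ is a savings strategy assembled as a sum $d=\sum_k w_k\,d^{(k)}$ of nonnegative sub-martingales, one per test component, where $d^{(k)}$ places doubling bets on the answer bits that reconstruct the event $x\in A_k$, and $w_k$ is a stake drawn from a fixed computable, summable schedule. On each of the infinitely many hits $x\in A_k$ the component $d^{(k)}$ multiplies its stake by $1/\lambda(A_k)$, contributing a bounded-below gain, so $d(y\upharpoonright_{n})\to\infty$. The hard part will be making $d$ genuinely computable rather than merely left-c.e.: a naive sum over the enumerated test has left-c.e.\ total mass $\sum_k w_k$ and would only yield a Martin-L\"of test, reproving what we already know. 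This is precisely where the out-of-order reading afforded by the sequence-set structure is essential. I would schedule the queries so that stake is committed to a component only after it has been enumerated, decoupling the committed stakes $w_k$ from the left-c.e.\ measures $\lambda(A_k)$ (for instance assigning the $s$-th enumerated component a stake from a fixed budget such as $2^{-s}$), and arranging the reading order so that the components actually hit by $x$ still receive favorable odds. Verifying that this scheduling simultaneously keeps the running capital computable, keeps every $\mathbf 1_{C_n}$ an independent fair bit, and still forces divergence on the hits is the technical heart of the argument; the tension between independence, which must hide the masks, and the martingale's need to decode the test, which wants the masks revealed, is what must be managed by committing the neutral mask-reading queries in the right order.
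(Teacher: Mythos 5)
The first thing to note is that the paper does not prove Theorem~\ref{thm:Petrovic} at all: it is imported from Petrovi\'{c} \cite{Petrovic:} as an external result, so there is no in-paper argument to compare yours against, and your sketch has to stand entirely on its own. It does not. What you defer in your last paragraph as ``the technical heart'' --- the tension between independence, which must hide the masks, and the martingale's need to decode the test, which wants the masks revealed --- is not a schedulable detail but a genuine dichotomy, and your construction fails on both horns.

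Concretely, you set $C_n=E_n\,\triangle\,J_n$ with $J_n=\{z:z(j_n)=1\}$ and $j_n$ fresh, so the informative bits of $y=F(x)$ are $y_n=e_n\oplus m_n$ with $e_n=\mathbf{1}_{E_n}(x)$ and $m_n=x(j_n)$. Horn (i): if no mask coordinate is ever queried, the bettor cannot recover $e_n$ from $y_n$, and for adversarial $x$ your $F$ provably cannot work. Take $x=x'\oplus r$ where $x'$ is not Martin-L\"of random and $r$ is Martin-L\"of random relative to $x'$; then $x$ is not Martin-L\"of random (randomness conservation, Theorem~\ref{thm:ML-rp-nrfn}, applied to the projection onto even coordinates), and you may legitimately instantiate your test as the pullback of a test for $x'$, so every $E_n$ depends only on even coordinates. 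Since freshness is your only constraint on the $j_n$ and on the auxiliary direct reads, nothing prevents all of them from landing on coordinates of $r$; in that case, conditioned on $x'$, every bit of $y$ is either a bit of $r$ or an $x'$-computable constant XOR a bit of $r$, so $y$ is the image of $r$ under an $x'$-computable measure-preserving map and is therefore Martin-L\"of random relative to $x'$, hence computably random --- no choice of stakes $w_k$ or reading order can change this. Horn (ii): if you do query a mask, then $\mathbf{1}_{E_n}=\mathbf{1}_{C_n}\oplus\mathbf{1}_{J_n}$ becomes measurable in the queried family, and mutual independence of that family forces every decodable $E_n$ to have measure exactly $1/2$ and, since $E_n$ and $E_{n'}$ are then functions of disjoint subfamilies of a mutually independent family, to be mutually independent of one another --- the exact opposite of the small, correlated, $x$-capturing components a Solovay test supplies, so decoding buys you nothing to bet on. (Note also that the paper's definition of a sequence-set map fixes the sequence $(C_n)$ obliviously, so adaptive ``scheduling'' of queries is not even available in this framework.) This dichotomy is essentially why the theorem is hard --- it is the same obstruction behind the open question whether Kolmogorov--Loveland randomness equals Martin-L\"of randomness --- and any correct proof must extract advantage from the joint structure of the clopen queries without purchasing independence with fresh bits of $x$; your reduction contains no mechanism for that. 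A minor further slip: your sanity check that single-coordinate families $C_n=\{z:z(\pi(n))=1\}$ cannot suffice invokes Lemma~\ref{lem:isomorphism}, which applies only when $\pi$ is a bijection; for merely injective selections the map is not an isomorphism and that argument does not literally apply.
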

As a corollary, we have a strengthening of Propositions~\ref{prop:KL-CR}
and \ref{prop:ML-CR-Rute}.
\begin{cor}
\label{cor:ML-CR-Petrovic}For $x\in2^{\mathbb{N}}$ the following
are equivalent.
\begin{enumerate}
\item $x$ is $\lambda$-Martin-Löf random.
\item $F(x)$ is $\lambda$-computably random for every total computable
measure-preserving map $F\colon(2^{\mathbb{N}},\lambda)\rightarrow(2^{\mathbb{N}},\lambda)$.
\end{enumerate}
\end{cor}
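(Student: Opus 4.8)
The plan is to prove the two implications separately, obtaining the forward direction from randomness conservation and the reverse direction (in contrapositive form) as a direct consequence of Petrovi\'{c}'s Theorem~\ref{thm:Petrovic}. The whole point of the corollary is that it packages the deep combinatorial content of Theorem~\ref{thm:Petrovic} together with the easy randomness-conservation direction, so almost all of the work is already done; the proof itself should be only a few lines.

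For (1)$\Rightarrow$(2), I would take $x$ to be $\lambda$-Martin-L\"of random and let $F\colon(2^{\mathbb{N}},\lambda)\rightarrow(2^{\mathbb{N}},\lambda)$ be an arbitrary total computable measure-preserving map. A total computable map is in particular almost-everywhere computable, and because it is measure-preserving we have $\lambda_{F}=\lambda$. Hence randomness conservation for Martin-L\"of randomness (Theorem~\ref{thm:ML-rp-nrfn}(1)) applies and yields that $F(x)$ is $\lambda$-Martin-L\"of random. Since Martin-L\"of randomness implies computable randomness, $F(x)$ is $\lambda$-computably random, which is exactly what (2) demands.

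For (2)$\Rightarrow$(1), I would argue the contrapositive. Assume $x$ is not $\lambda$-Martin-L\"of random. By Theorem~\ref{thm:Petrovic} there is a total computable sequence-set function $F\colon2^{\mathbb{N}}\rightarrow2^{\mathbb{N}}$ such that $F(x)$ is not $\lambda$-computably random. As noted just before the statement of Theorem~\ref{thm:Petrovic}, every sequence-set map is measure-preserving of type $(2^{\mathbb{N}},\lambda)\rightarrow(2^{\mathbb{N}},\lambda)$, since the defining clopen sets $C_{n}$ each have measure $1/2$ and are mutually $\lambda$-independent. Thus this $F$ is one of the total computable measure-preserving maps quantified over in (2), and it witnesses the failure of (2).

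The main obstacle is entirely absorbed into Theorem~\ref{thm:Petrovic}; granting that result, nothing difficult remains. The only points I would make sure to spell out are the two routine verifications that make the two deep theorems applicable: that ``total computable'' implies ``almost-everywhere computable'' (so randomness conservation applies in the forward direction) and that a sequence-set map genuinely is a total computable measure-preserving self-map of $(2^{\mathbb{N}},\lambda)$ (so that Petrovi\'{c}'s witness falls within the class quantified over in statement~(2)). I would also remark, to justify calling this a strengthening of Proposition~\ref{prop:ML-CR-Rute}, that here one may restrict to \emph{measure-preserving} maps, so that $\lambda_{F}=\lambda$ rather than an arbitrary pushforward, and indeed even to sequence-set maps.
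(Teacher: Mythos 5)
Your proposal is correct and is essentially the argument the paper intends: the paper states this corollary without a written proof precisely because it follows immediately from randomness conservation for Martin-L\"of randomness (plus the implication from Martin-L\"of to computable randomness) in one direction, and from Theorem~\ref{thm:Petrovic} in contrapositive form in the other, using the observation already made in the paper that sequence-set maps are total computable measure-preserving self-maps of $(2^{\mathbb{N}},\lambda)$. Your two routine verifications are exactly the right points to spell out, and nothing further is needed.
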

Now we have a stronger version of Theorem~\ref{thm:ML-characterization-measure}
by basically the same proof, replacing Proposition~\ref{prop:ML-CR-Rute}
with Corollary~\ref{cor:ML-CR-Petrovic}. 
\begin{thm}
\label{thm:ML-characterization-lebsgue}The set of $\lambda$-Martin-Löf
randoms is the largest subset of $2^{\mathbb{N}}$ closed under no-randomness-from-nothing
and randomness conservation for a.e.\ computable measure-preserving
maps $F\colon(2^{\mathbb{N}},\lambda)\rightarrow(2^{\mathbb{N}},\lambda)$.\end{thm}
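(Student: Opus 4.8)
The plan is to mirror the proof of Theorem~\ref{thm:ML-characterization-measure}, but since here both the domain and the codomain carry the single fixed measure $\lambda$, I can work entirely on $(2^{\mathbb{N}},\lambda)$ and thereby skip the measure-transport step that was needed in the earlier argument. For the forward direction I would simply invoke Theorem~\ref{thm:ML-rp-nrfn} with $\mu=\nu=\lambda$: part (1) shows the $\lambda$-Martin-L\"of randoms are closed under randomness conservation for a.e.\ computable measure-preserving maps $F\colon(2^{\mathbb{N}},\lambda)\rightarrow(2^{\mathbb{N}},\lambda)$, and part (2) shows they are closed under no-randomness-from-nothing. Hence the set of $\lambda$-Martin-L\"of randoms is one such set.

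For maximality, I would argue by contradiction. Suppose $A$ is closed under both properties but strictly contains the $\lambda$-Martin-L\"of randoms, and fix $y\in A$ that is not $\lambda$-Martin-L\"of random. First apply Corollary~\ref{cor:ML-CR-Petrovic}: since $y$ fails condition (1), it fails condition (2), so there is a total computable measure-preserving map $F\colon(2^{\mathbb{N}},\lambda)\rightarrow(2^{\mathbb{N}},\lambda)$ with $F(y)$ not $\lambda$-computably random. As $F$ is total computable it is in particular a.e.\ computable and measure-preserving from $\lambda$ to $\lambda$, so randomness conservation for $A$ forces $F(y)\in A$.

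Next I would apply Theorem~\ref{thm:not-CR-nrnf} with $\mu=\lambda$ and $x_{0}=F(y)$, which is legitimate precisely because $F(y)$ is not $\lambda$-computably random: this yields an a.e.\ computable measure-preserving map $H\colon(2^{\mathbb{N}},\lambda)\rightarrow(2^{\mathbb{N}},\lambda)$ with $H^{-1}(\{F(y)\})=\varnothing$. But $F(y)\in A$ together with no-randomness-from-nothing for $A$ would demand some $x\in A$ with $H(x)=F(y)$, while the empty preimage rules out even a single such $x$ in all of $2^{\mathbb{N}}$. This contradiction shows $A$ can contain no non-random point, completing maximality.

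The only place requiring care---rather than a genuine obstacle---is the bookkeeping on the map types, and this is exactly where the use of Corollary~\ref{cor:ML-CR-Petrovic} in place of Proposition~\ref{prop:ML-CR-Rute} pays off: Petrovi\'{c}'s $F$ is total computable (hence a.e.\ computable, as needed for randomness conservation) and its pushforward is exactly $\lambda$, so the whole argument stays over one measure and $F(y)$ is already a point of the space on which $A$ lives. Likewise the map $H$ from Theorem~\ref{thm:not-CR-nrnf} is a.e.\ computable and measure-preserving $\lambda\rightarrow\lambda$, which is precisely the class of maps over which $A$ is assumed closed. Once these type-matchings are checked, each step is a black-box application of an earlier result and no new computation is required.
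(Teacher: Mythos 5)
Your proof is correct and is essentially the paper's own argument: the paper also establishes maximality by combining Corollary~\ref{cor:ML-CR-Petrovic} with Theorem~\ref{thm:not-CR-nrnf} applied at $\mu=\lambda$, merely stating the two applications in the opposite order (first concluding every element of $A$ is $\lambda$-computably random via Theorem~\ref{thm:not-CR-nrnf} and closure under no-randomness-from-nothing, then ruling out non-Martin-L\"of-random elements via Petrovi\'{c}'s map and randomness conservation). Your explicit check of the forward direction via Theorem~\ref{thm:ML-rp-nrfn}, which the paper leaves implicit, and your type-checking of the maps are both accurate.
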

\begin{proof}
Let $A\subseteq2^{\mathbb{N}}$ be closed under no-randomness-from-nothing
and randomness conservation. By Theorem~\ref{thm:not-CR-nrnf}, every
element of $A$ must be $\lambda$-computably random. Then by Corollary~\ref{cor:ML-CR-Petrovic},
every element of $A$ must be $\lambda$-Martin-Löf random. Therefore
the $\lambda$-Martin-Löf randoms are the largest such set.
\end{proof}
\bibliographystyle{plain}
\bibliography{rand_from_somewhere}

\def\cprime{$'$}
\begin{thebibliography}{10}

\bibitem{Allen:aa}
Kelty Allen, Laurent Bienvenu, and Theodore~A. Slaman.
\newblock On zeros of {M}artin-{L}\"of random {B}rownian motion.
\newblock {\em J. Log. Anal.}, 6(9):1--34, 2014.

\bibitem{Bienvenu2011}
Laurent Bienvenu, Peter G{\'a}cs, Mathieu Hoyrup, Cristobal Rojas, and
  Alexander Shen.
\newblock Algorithmic tests and randomness with respect to a class of measures.
\newblock {\em Proceedings of the Steklov Institute of Mathematics},
  274(1):34--89, 2011.
\newblock Published in Russian in {\it Trudy Matematicheskogo Instituta imeni
  V.A. Steklova}, 2011, Vol. 274, pp. 41--102.

\bibitem{Bienvenu.Holzl.Porter.ea:}
Laurent Bienvenu, Rupert H{\"o}lzl, Christopher Porter, and Paul Shafer.
\newblock Randomness and semi-measures.
\newblock To appear in \emph{Notre Dame J. Form. Log.} Available at
  \url{http://arxiv.org/abs/1310.5133}.

\bibitem{BienvenuSubmitted}
Laurent Bienvenu and Christopher Porter.
\newblock Strong reductions in effective randomness.
\newblock {\em Theoret. Comput. Sci.}, 459:55--68, 2012.

\bibitem{Downey2010}
Rodney~G. Downey and Denis~R. Hirschfeldt.
\newblock {\em Algorithmic randomness and complexity}.
\newblock Theory and Applications of Computability. Springer, New York, 2010.

\bibitem{Fouche2000}
Willem~L. Fouch{\'e}.
\newblock The descriptive complexity of {B}rownian motion.
\newblock {\em Adv. Math.}, 155(2):317--343, 2000.

\bibitem{Franklin.Ng:2011}
Johanna N.~Y. Franklin and Keng~Meng Ng.
\newblock Difference randomness.
\newblock {\em Proc. Amer. Math. Soc.}, 139(1):345--360, 2011.

\bibitem{Franklin.Stephan:2010}
Johanna N.~Y. Franklin and Frank Stephan.
\newblock Schnorr trivial sets and truth-table reducibility.
\newblock {\em J. Symbolic Logic}, 75(2):501--521, 2010.

\bibitem{Gacs:aa}
Peter G\'{a}cs.
\newblock Lecture notes on descriptional complexity and randomness.
\newblock Available at \url{http://www.cs.bu.edu/~gacs/papers/ait-notes.pdf}.

\bibitem{Gacs2011}
Peter G{\'a}cs, Mathieu Hoyrup, and Crist{\'o}bal Rojas.
\newblock Randomness on computable probability spaces---a dynamical point of
  view.
\newblock {\em Theory Comput. Syst.}, 48(3):465--485, 2011.

\bibitem{Hertling.Weihrauch:1998}
Peter Hertling and Klaus Weihrauch.
\newblock Randomness spaces (extended abstract).
\newblock In {\em Automata, languages and programming ({A}alborg, 1998)},
  volume 1443 of {\em Lecture Notes in Comput. Sci.}, pages 796--807. Springer,
  Berlin, 1998.

\bibitem{Hoyrup:2009fe}
Mathieu Hoyrup and Crist{{\'o}}bal Rojas.
\newblock An application of {M}artin-{L}{\"o}f randomness to effective
  probability theory.
\newblock In {\em Mathematical theory and computational practice}, volume 5635
  of {\em Lecture Notes in Comput. Sci.}, pages 260--269. Springer, Berlin,
  2009.

\bibitem{Hoyrup:2009pi}
Mathieu Hoyrup and Crist{{\'o}}bal Rojas.
\newblock Applications of effective probability theory to {M}artin-{L}{\"o}f
  randomness.
\newblock In {\em Automata, languages and programming. {P}art {I}}, volume 5555
  of {\em Lecture Notes in Comput. Sci.}, pages 549--561. Springer, Berlin,
  2009.

\bibitem{Hoyrup2009}
Mathieu Hoyrup and Crist{\'o}bal Rojas.
\newblock Computability of probability measures and {M}artin-{L}\"of randomness
  over metric spaces.
\newblock {\em Inform. and Comput.}, 207(7):830--847, 2009.

\bibitem{Jockusch.Soare:1972}
Carl~G. Jockusch, Jr. and Robert~I. Soare.
\newblock {$\Pi ^{0}_{1}$} classes and degrees of theories.
\newblock {\em Trans. Amer. Math. Soc.}, 173:33--56, 1972.

\bibitem{Kjos-Hanssen:2010aa}
Bj{\o}rn Kjos-Hanssen.
\newblock The probability distribution as a computational resource for
  randomness testing.
\newblock {\em J. Log. Anal.}, 2(10):1--13, 2010.

\bibitem{Levin:1976uq}
L.~A. Levin.
\newblock Uniform tests for randomness.
\newblock {\em Dokl. Akad. Nauk SSSR}, 227(1):33--35, 1976.

\bibitem{Levin:1984}
Leonid~A. Levin.
\newblock Randomness conservation inequalities: information and independence in
  mathematical theories.
\newblock {\em Inform. and Control}, 61(1):15--37, 1984.

\bibitem{Martin-Lof:1966mz}
Per Martin-L{{\"o}}f.
\newblock The definition of random sequences.
\newblock {\em Inform. and Control}, 9:602--619, 1966.

\bibitem{Merkle2003}
Wolfgang Merkle.
\newblock The {K}olmogorov-{L}oveland stochastic sequences are not closed under
  selecting subsequences.
\newblock {\em J. Symbolic Logic}, 68(4):1362--1376, 2003.

\bibitem{Merkle2006}
Wolfgang Merkle, Nenad Mihailovi{\'c}, and Theodore~A. Slaman.
\newblock Some results on effective randomness.
\newblock {\em Theory Comput. Syst.}, 39(5):707--721, 2006.

\bibitem{Miyabe2011}
Kenshi Miyabe.
\newblock Truth-table {S}chnorr randomness and truth-table reducible
  randomness.
\newblock {\em Math. Log. Q.}, 57(3):323--338, 2011.

\bibitem{Miyabe:2013uq}
Kenshi Miyabe.
\newblock {$L^1$}-computability, layerwise computability and {S}olovay
  reducibility.
\newblock {\em Computability}, 2(1):15--29, 2013.

\bibitem{Miyabe.Rute:2013}
Kenshi Miyabe and Jason Rute.
\newblock Van {L}ambalgen's theorem for uniformly relative {S}chnorr and
  computable randomness.
\newblock In {\em Proceedings of the 12th {A}sian {L}ogic {C}onference}, pages
  251--270. World Sci. Publ., Hackensack, NJ, 2013.

\bibitem{Muchnik:1998rt}
Andrei~A. Muchnik, Alexei~L. Semenov, and Vladimir~A. Uspensky.
\newblock Mathematical metaphysics of randomness.
\newblock {\em Theoret. Comput. Sci.}, 207(2):263--317, 1998.

\bibitem{Nies2009}
Andr{\'e} Nies.
\newblock {\em Computability and randomness}, volume~51 of {\em Oxford Logic
  Guides}.
\newblock Oxford University Press, Oxford, 2009.

\bibitem{Pathak:2014fk}
Noopur Pathak, Crist{{\'o}}bal Rojas, and Stephen~G. Simpson.
\newblock Schnorr randomness and the {L}ebesgue differentiation theorem.
\newblock {\em Proc. Amer. Math. Soc.}, 142(1):335--349, 2014.

\bibitem{Petrovic:}
Tomislav Petrovi{\'c}.
\newblock A pair of universal sequence-set betting strategies.
\newblock Preprint. Available at \url{http://arxiv.org/abs/1210.5968}.

\bibitem{Rute:mz}
Jason Rute.
\newblock Algorithmic randomness and maps with computable conditional
  probability.
\newblock In preparation.

\bibitem{rute1}
Jason Rute.
\newblock Computable randomness and betting for computable probability spaces.
\newblock To appear in \emph{Math. Log. Q.} Available at
  \url{http://arxiv.org/abs/1203.5535}.

\bibitem{Rute:aa}
Jason Rute.
\newblock On the close interaction between algorithmic randomness and
  computable measure theory.
\newblock In preparation.

\bibitem{Rute:2013pd}
Jason Rute.
\newblock {\em Topics in algorithmic randomness and computable analysis}.
\newblock PhD thesis, Carnegie Mellon University, August 2013.
\newblock Available at \url{http://repository.cmu.edu/dissertations/260/}.

\bibitem{Sanin:1968dq}
N.~A. {\v{S}}anin.
\newblock {\em Constructive real numbers and constructive function spaces}.
\newblock Translated from the Russian by E. Mendelson. Translations of
  Mathematical Monographs, Vol. 21. American Mathematical Society, Providence,
  R.I., 1968.

\bibitem{Schnorr1971}
Claus-Peter Schnorr.
\newblock A unified approach to the definition of random sequences.
\newblock {\em Math. Systems Theory}, 5:246--258, 1971.

\bibitem{Schnorr:1971rw}
Claus-Peter Schnorr.
\newblock {\em Zuf{\"a}lligkeit und {W}ahrscheinlichkeit. {E}ine algorithmische
  {B}egr{\"u}ndung der {W}ahrscheinlichkeitstheorie}.
\newblock Lecture Notes in Mathematics, Vol. 218. Springer-Verlag, Berlin,
  1971.

\bibitem{Simpson:aa}
Stephen~G. Simpson and Frank Stephan.
\newblock Cone avoidance and randomness preservation.
\newblock {\em Ann. Pure Appl. Logic}, 166(6):713--728, 2015.

\bibitem{Takahashi:2008}
Hayato Takahashi.
\newblock On a definition of random sequences with respect to conditional
  probability.
\newblock {\em Inform. and Comput.}, 206(12):1375--1382, 2008.

\end{thebibliography}

\end{document}